\newtheorem{theorem}{Theorem}[section]
\newtheorem{Example}{Example}[section]
\newtheorem{lemma}{Lemma}[section]
\newlist{todolist}{itemize}{2}
\setlist[todolist]{label=$\square$}
\begin{document}
	\title{A parameter uniform method for two-parameter singularly perturbed  boundary value problems with discontinuous data}
	\date{}
\author{Nirmali Roy$^{1*}$, Anuradha Jha$^{2}$}
\date{%
    $^{1,2}$Indian Institute of Information Technology Guwahati, Bongora, India,781015\\
   *Corresponding author(s). E-mails: {nirmali@iiitg.ac.in}
}
	\maketitle
	%%%%%%%%%%%%%%%%%%%%%%%%%%%%%%%%%%%%%%%%%%%%%%%%%%%%%%
	\section*{Abstract}A two-parameter singularly perturbed problem with discontinuous source and convection coefficient is considered in one dimension. Both convection coefficient and source term are discontinuous at a point in the domain. The presence of perturbation parameters results in boundary layers at the boundaries. Also, an interior layer occurs due to the discontinuity of data at an interior point. An upwind scheme on an appropriately defined Shishkin-Bakhvalov mesh is used to resolve the boundary layers and interior layers. A three-point formula is used at the point of discontinuity. The proposed method has first-order parameter uniform convergence. Theoretical error estimates derived are verified using the numerical method on some test problems. Numerical results authenticate the claims made. The use of the Shiskin-Bakhvalov mesh helps achieve the first-order convergence, unlike the Shishkin mesh, where the order of convergence deteriorates due to a logarithmic term. 
		
		\textbf{Keywords}: singular perturbation, Interior layers, boundary layers, two parameters, Shishkin-Bakhvalov mesh.

%%%%%%%%%%%%%%%%%%%%%%%%%%%%%%%%%%%%%%%%%%%%%%%%%%%%%%%5

\section{Introduction}	
\label{sec1}
		Consider a  singularly perturbed reaction-convection-diffusion problem on a unit interval $\Omega= (0,1)$,  with a discontinuous source term and convection coefficient.  The source term and convection coefficient are discontinuous at a point $d \in \Omega$. 
	\begin{equation}
	\begin{aligned}
	\label{twoparameter}
	&\mathcal{L}(y(x))\equiv \epsilon y''(x)+\mu a(x) y'(x)-b(x)y(x)=f(x), \hspace{.5cm} x\in(\Omega^- \cup\Omega^+),\\
	&y(0)=y_0,~y(1)=y_1,\\
	&a(x)\leq - \alpha_1 <0 ~ \text{ for } ~x \in \Omega^-, ~~a(x)\geq  \alpha_2 >0 ~ \text{ for }  ~ x \in \Omega^+ \\
	&\lvert [a](d)\rvert <C,~~\lvert [f](d)\rvert <C,
	\end{aligned}
	\end{equation}
	where $0<\epsilon,\mu\ll 1, \alpha_1, \alpha_2 \in \mathbb{R}$,  and $\Omega^-=(0,d), \Omega^+=(d,1), \Omega=(0,1)$.
	The coefficient $b(x)\geq \gamma >0 $ is sufficiently smooth in $\bar{\Omega}$ and $a(x), f(x)$ are sufficiently smooth in $(\Omega^- \cup\Omega^+)\cup\{0,1\}$.  Also, $a(x),f(x),$ and their derivatives have a jump discontinuity at $d$. The jump in any function at point $d$ is denoted as $[g](d)=g(d+)-g(d-).$ Also, let $\displaystyle \rho= \min_{x \in \bar{\Omega}\backslash \{d\}} \left\{ \bigg\lvert \frac{b(x)}{a(x)} \bigg\rvert \right\}$ and $\alpha= \lvert \min \{\alpha_1, \alpha_2\}\rvert $. Under the above assumptions, the BVP  $\eqref{twoparameter}$  admits a unique solution $y(x) \in \mathcal{C}^1(\Omega) \cap \mathcal{C}^2 (\Omega^- \cup \Omega^+)$.
	
	 The solution of above Equation \eqref{twoparameter}  has boundary layers at both boundaries due to the presence of small perturbation parameters $\epsilon $ and $\mu $. In addition, it has strong interior layers in the neighborhood of $d$ due to the discontinuity of $a$ and $f$ and the sign of $a$ in the domain. The  ratios  $\displaystyle \frac{\epsilon}{\mu^2}$ and $\displaystyle \frac{\mu^2}{\epsilon}$ are crucial in determining the width of boundary and interior layers. So we will analyze the above problems under the following two cases:
	 \ $\sqrt{\alpha}\mu \le\sqrt{\rho \epsilon} $ and $\sqrt{\alpha}\mu > \sqrt{\rho \epsilon} $.
	
	  The singular perturbation problems with/without interior layers appear in several branches of engineering and sciences, such as flows in chemical reactors \cite{Alh}, equations involving modeling  of semiconductor devices \cite{PHS},  simulation of water pollution problems \cite{REILW}, and simulation of  many fluid flows \cite{Hir, KL}. 
	  
	   The study of two-parameter singularly perturbed problems was initiated by O'Malley {\cite{OM1,OM,OM2}}, who examined the asymptotic solution. He noted that the perturbation parameters $\epsilon$ and $\mu$ and their ratio affect the solution of these problems. Much work is done for a singularly perturbed two-parameter reaction-convection-diffusion equation with smooth data \cite{RU,GRP,RPS,Z,ZW}. The study of numerical methods for singularly perturbed two-parameter problems with discontinuity in data is an open area of research with much to explore
	  
	    Some work on singularly perturbed one-parameter problems with a discontinuity is discussed in \cite{Cen,FMRS,FH,FHMRS, Lins02}.

	   Shanti et al. in \cite{SRN06} presented an almost first-order numerical technique for two-parameter singularly perturbed problem with a discontinuous
	   source term. The method comprised of upwind difference scheme on an appropriately defined Shishkin mesh. This result was improved by Prabha et al. in \cite{PCS}, who proposed an almost second-order method on Shishkin mesh comprising the central, mid-point, and upwind difference scheme. A five-point difference scheme was used at the point of discontinuity.  
	   An almost second-order method was given by Chandru et al. in \cite{CPS} for a singularly perturbed two-parameter problem with a discontinuous source term. The method consisted of proper use of upwind, central, and mid-point upwind difference methods on a suitably chosen Shishkin mesh. A three-point scheme was used at the point of discontinuity.
	     
	     Prabha et al. considered the same problem (\ref{twoparameter}) in \cite{TP} and gave an almost first-order method comprising of upwind difference method on a layer adapted Shishkin mesh with a three-point difference scheme at the point of discontinuity.
	     
	     Lin$\ss$ proposed Shishkin-Bakhvalov mesh for a one-parameter singular perturbation problem in \cite{Linss99}. In this mesh, the layer part has graded mesh formed by inverting the boundary layer term. The outer region has a uniform mesh. The transition point is chosen as in Shishkin mesh.

	  In this article,  for equation \eqref{twoparameter}, we have used upwind scheme on Shishkin-Bakhvalov mesh.  At the point of discontinuity a three-point difference scheme is used. The proposed scheme is first-order parameter uniform convergent. Shishkin-Bakhvalov mesh performs better than Shishkin mesh. In Shishkin mesh, the order of convergence is deteriorated due to a logarithmic factor, unlike here. 
	 
	 Throughout this article, $C$ denotes a generic positive constant independent of perturbation parameters, number of mesh points.
	  
	  Here, the  maximum norm on the domain $\Omega$ is denoted by
	  \[ \|v\|_{\Omega}= \max_{x \in \Omega} \lvert v(x)\rvert. \]
	  
	  The structure of the paper is as follows. In Section 2, apriori bounds on the solution are stated, followed by the decomposition of the solution and some derivative bounds in Section 3. The numerical method is proposed in Section 4. Section 5 presents the error estimates for the proposed method. Some numerical results are included in Section 6, which verify the theoretical claims made. A summary of the main results is in Section 7.
\section{Apriori Bounds}\label{sec2}
In this section, we discuss the existence of a unique solution, the minimum principle, stability bound and the apriori bounds for the solution of Equation \eqref{twoparameter}.
	%%%%%%%%%%%%%%%THEOREM 2.1 %%%%%%%%%%%%%%%%%%
	\begin{theorem}
		The SPPs \eqref{twoparameter} has a solution $y(x)\in C^{0}(\bar{\Omega})\cap C^{1}(\Omega)\cap C^{2}(\Omega^{-}\cup \Omega^{+})$.
	\end{theorem}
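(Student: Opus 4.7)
My plan is to build $y$ in two pieces, solving the ODE separately on $\bar\Omega^-$ and $\bar\Omega^+$ where the data are smooth, and then determining a single free constant so that the pieces match in a $C^1$ fashion at the discontinuity point $x=d$.

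Step 1. On each closed subinterval the coefficients $a,b$ and source $f$ are smooth, and since $b(x)\ge\gamma>0$ the operator $\mathcal{L}$ admits a minimum principle (the first-order term's sign is irrelevant here). Classical linear two-point BVP theory then makes the Dirichlet problem for $\mathcal{L}$ uniquely solvable on each closed subinterval. Using this, construct
\[
u^-,\phi^-\in C^2(\bar\Omega^-), \qquad u^+,\phi^+\in C^2(\bar\Omega^+),
\]
where $u^\pm$ satisfy $\mathcal{L}u^\pm=f$ with $u^-(0)=y_0,\; u^-(d)=0$ and $u^+(d)=0,\; u^+(1)=y_1$, while $\phi^\pm$ satisfy $\mathcal{L}\phi^\pm=0$ with $\phi^-(0)=0,\;\phi^-(d)=1$ and $\phi^+(d)=1,\;\phi^+(1)=0$.

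Step 2. For a scalar $A$ to be chosen, define
\[
y_A(x)=\begin{cases} u^-(x)+A\phi^-(x), & x\in\bar\Omega^-,\\[2pt] u^+(x)+A\phi^+(x), & x\in\bar\Omega^+.\end{cases}
\]
By linearity $\mathcal{L}y_A=f$ on $\Omega^-\cup\Omega^+$; the Dirichlet conditions at $0$ and $1$ hold automatically; and continuity at $d$ holds because $y_A(d^-)=y_A(d^+)=A$. The membership $y_A\in C^1(\Omega)$ therefore reduces to the single scalar equation
\[
A\bigl(\phi^{-\prime}(d^-)-\phi^{+\prime}(d^+)\bigr)=u^{+\prime}(d^+)-u^{-\prime}(d^-).
\]

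Step 3 (the main obstacle). Show the coefficient of $A$ is nonzero, so that the matching equation is uniquely solvable. Applying the minimum principle to $\phi^-$ on $\bar\Omega^-$ (respectively $\phi^+$ on $\bar\Omega^+$) gives $0\le\phi^-\le 1$ and $0\le\phi^+\le 1$, with the maximum value $1$ attained at the shared endpoint $d$. Since this maximum is strictly positive and the zeroth-order coefficient $-b$ satisfies $-b\le-\gamma<0$, a Hopf-type boundary-point argument on each subinterval yields the strict sign conclusions $\phi^{-\prime}(d^-)>0$ and $\phi^{+\prime}(d^+)<0$. Thus the bracket is strictly positive and $A$ is uniquely determined.

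With this choice of $A$, the function $y_A$ lies in $C^2$ on each open subinterval by construction, meets the boundary data, is continuous on $\bar\Omega$, and is $C^1$ across $d$, giving the required regularity $y\in C^0(\bar\Omega)\cap C^1(\Omega)\cap C^2(\Omega^-\cup\Omega^+)$. (Uniqueness, though not asserted, would follow by the same minimum principle applied to the difference of two such solutions.)
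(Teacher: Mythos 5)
Your construction is correct, and it takes a noticeably different route from the paper's. The paper also builds $y$ by superposition and $C^1$-matching at $d$, but it first extends the convection coefficient to two smooth functions $a_1<0$, $a_2>0$ on all of $\Omega$, takes particular solutions $u_1,u_2$ on the subintervals together with two homogeneous solutions $\phi_1,\phi_2$ of boundary value problems posed on the \emph{whole} interval, and then determines \emph{two} constants $A,B$ from the continuity and derivative-matching conditions; non-degeneracy is the determinant condition $\phi_2'(d)\phi_1(d)-\phi_1'(d)\phi_2(d)>0$, obtained from $0<\phi_i<1$ and the monotonicity $\phi_1'<0$, $\phi_2'>0$. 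You instead work entirely on the two closed subintervals (so no smooth extension of $a$ is needed), normalize the particular solutions to vanish at $d$ and the homogeneous ones to equal $1$ there, which builds continuity into the ansatz and leaves a single scalar unknown $A$; non-degeneracy becomes the one-dimensional sign statement $\phi^{-\prime}(d^-)>0>\phi^{+\prime}(d^+)$, which you get from the minimum principle plus a Hopf-type boundary-point argument (for full rigor you should note that $\phi^-<1$ and $\phi^+<1$ in the interior — e.g.\ by the strong maximum principle with $-b<0$, since otherwise they would be identically $1$, contradicting the zero boundary values at $0$ and $1$ — as Hopf's lemma requires the maximum to be attained only at the boundary point; in fact the elementary difference-quotient bound already gives $\phi^{-\prime}(d^-)\ge 0\ge\phi^{+\prime}(d^+)$, so strictness on one side suffices). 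The trade-off: the paper's two-parameter ansatz avoids any boundary-point lemma at the cost of constructing the extensions $a_1,a_2$ and solving global auxiliary problems, while your version is more self-contained on the subdomains and reduces the solvability question to a single strict sign, but leans on the Hopf lemma (or an equivalent 1D argument) for that strictness.
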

	%%%%%%%%%PROOF OF THEOREM 2.1  %%%%%%%%%%%%%%%%
\begin{proof}
	The proof is by construction. Let $u_{1}, u_{2}$ be particular solutions to the differential equations
	$$\epsilon u_{1}''(x)+\mu a_{1}(x) u_{1}'(x)-b(x)u_{1}(x)=f(x),~~x \in \Omega^-,$$ and $$\epsilon u_{2}''(x)+\mu a_{2}(x) u_{2}'(x)-b(x)u_{2}(x)=f(x),~~x \in \Omega^+,$$ respectively.
	 The convection coefficients  $a_{1}, a_{2} \in C^{2}(\Omega)$ have the following properties:
	 $$a_{1}(x)=a(x), ~~x \in \Omega^-, ~~a_{1}<0, x \in \Omega$$
	 $$a_{2}(x)=a(x), ~~x \in \Omega^+, ~~a_{2}>0, x \in \Omega.$$
	 Consider the function
	 $$y(x)= \left\{
	 \begin{array}{ll}
	 \displaystyle u_{1}(x)+(y_0-u_{1}(0))\phi_{1}(x)+A\phi_{2}(x), & \hbox{ $x\in \Omega^{-},$ }\\
	 u_{2}(x)+B\phi_{1}(x)+(y_1-u_{2}(1))\phi_{2}(x), & \hbox{ $x\in \Omega^{+}$,}
	 \end{array}
	 \right.~~~$$
	 where $A,B$ are constants chosen appropriately for  $y\in C^{1}(\Omega)$ and $\phi_{1}(x),\phi_{2}(x)$ are the solutions of the boundary value problems
	 	$$\epsilon \phi_{1}''(x)+\mu a_{1}(x) \phi_{1}'(x)-b(x)\phi_{1}(x)=0,~~x \in \Omega,~~\phi_{1}(0)=1,~~\phi_{2}(0)=0,$$ and $$\epsilon \phi_{2}''(x)+\mu a_{2}(x) \phi_{2}'(x)-b(x)\phi_{2}(x)=0,~~x \in \Omega,~~\phi_{2}(0)=0,~~\phi_{2}(1)=1$$
	 	respectively.
	 	
We observe that the function $y$  satisfies $y(0)=y_0, y(1)=y_1,$ and $\epsilon y''(x)+\mu a(x) y'(x)-b(x)y(x)=f(x),~~x \in \Omega^-\cup \Omega^{+}.$ Also on an open interval $(0,1)$, $0<\phi_{i}<1, ~i=1,2.$ Thus, $\phi_{1}, \phi_{2}$ cannot have an internal maximum or minimum, and hence 
$$\phi_{1}'(x)<0,~~\phi_{2}'(x)>0,~~x\in(0,1).$$

For the existence of constants $A$ and $ B$, we require that 
$${\begin{vmatrix}
	 \phi_{2}(d) &-\phi_{1}(d) \\
 \phi_{2}'(d) &-\phi_{1}'(d)
\end{vmatrix} }\ne 0.$$
In fact,  $\phi_{2}'(d) \phi_{1}(d)-\phi_{1}'(d) \phi_{2}(d)>0.$
\end{proof}
In the next result, we prove the minimum principle for the operator $\mathcal{L}$. 
%%%%%%%%%%% THEOREM 2.2- MINIMUM PRINCIPLE %%%%%%%%%%%%%%%
\begin{theorem}{(Minimum Principle)}
	Suppose that a function $z(x)\in C^{0}(\bar{\Omega})\cap C^{1}(\Omega)\cap C^{2}(\Omega^{-}\cup \Omega^{+})$ satisfies $z(0)\ge 0,~z(1)\ge 0$ , $\mathcal{L}z(x)\le 0, \; \forall x \in \Omega^- \cup \Omega^+$ and $[z'](d) \le 0$ then $z(x)\geq 0 \; \forall \; \;x \in \bar{\Omega}$.
\end{theorem}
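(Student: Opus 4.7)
My plan is to proceed by contradiction. I would assume $\min_{\bar\Omega} z < 0$ and let $p \in \bar\Omega$ be a point where this minimum is attained; the boundary conditions $z(0), z(1) \geq 0$ immediately force $p \in \Omega$, and the argument splits naturally into the subcase $p \in \Omega^- \cup \Omega^+$ and the subcase $p = d$.

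For the first subcase, $p$ lies in the interior of a region where $z$ is $C^2$, so the standard interior-minimum conditions $z'(p) = 0$ and $z''(p) \geq 0$ are available. Substituting into the operator,
\[
\mathcal{L}z(p) = \epsilon z''(p) + \mu a(p) z'(p) - b(p) z(p) \geq -b(p) z(p) > 0,
\]
since $b(p) > 0$ and $z(p) < 0$, which contradicts the hypothesis $\mathcal{L}z \leq 0$ on $\Omega^- \cup \Omega^+$. This is the familiar textbook step and I would not dwell on it.

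The main obstacle is the subcase $p = d$, where $z$ is only $C^0$ across the discontinuity. Here I would first exploit the geometry of the minimum: forming the one-sided Newton quotients $(z(d\pm h) - z(d))/(\pm h)$ for small $h > 0$ and passing $h \to 0^+$ gives $z'(d-) \leq 0 \leq z'(d+)$, so $[z'](d) \geq 0$. Combining with the hypothesis $[z'](d) \leq 0$ forces $[z'](d) = 0$, hence $z'(d-) = z'(d+) = 0$. Next I would upgrade this to a second-order statement on the left: a Taylor expansion on $[d-h,d]$ yields $z(d-h) - z(d) = \tfrac{h^2}{2} z''(\xi_h)$, and since the left-hand side is non-negative while $z''$ has a one-sided limit at $d-$ (which exists by rearranging the ODE as $z'' = (f+bz-\mu a z')/\epsilon$ on $\Omega^-$), letting $h \to 0^+$ gives $z''(d-) \geq 0$. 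Evaluating the operator at $d-$ then produces $\mathcal{L}z(d-) \geq -b(d)z(d) > 0$, again contradicting $\mathcal{L}z \leq 0$.

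The crux is recognising that the jump hypothesis $[z'](d) \leq 0$ is exactly what rules out a ``reverse cusp'' minimum at $d$: combined with the geometric inequality $[z'](d) \geq 0$ forced by $d$ being the minimiser, it collapses the behaviour at $d$ into that of an interior minimum on the left subregion, so the one-sided version of the textbook argument closes the case.
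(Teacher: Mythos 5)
Your overall skeleton is fine: split a putative negative minimiser $p$ into $p\in\Omega^{-}\cup\Omega^{+}$ versus $p=d$, handle the first case by the textbook interior-minimum argument (which works here precisely because $b\ge\gamma>0$), and at $p=d$ use the geometric inequality $z'(d-)\le 0\le z'(d+)$ together with the hypothesis $[z'](d)\le 0$ to force $z'(d-)=z'(d+)=0$. (For comparison, the paper offers no argument of its own here; it simply refers to \cite{TP}.) The genuine gap is in your left-sided second-order step: you justify the existence of the one-sided limit $z''(d-)$ ``by rearranging the ODE as $z''=(f+bz-\mu a z')/\epsilon$ on $\Omega^-$''. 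But $z$ is \emph{not} a solution of the differential equation; the minimum principle is assumed only for functions satisfying the differential inequality $\mathcal{L}z\le 0$ on the open set $\Omega^{-}\cup\Omega^{+}$ — indeed in this paper it is applied to barrier functions such as $M\pm y$ and $Ce^{-\theta_{2}x}\pm w_{l}^{*-}$, which do not satisfy $\mathcal{L}z=f$. Since the hypotheses only give $z\in C^{2}(\Omega^{-}\cup\Omega^{+})$ on the open set, nothing guarantees that $z''$ has a limit at $d-$, and the subsequent step ``evaluating the operator at $d-$'' tacitly uses $\mathcal{L}z(d-)\le 0$, which is likewise not available unless those limits exist.

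The step can be repaired without ever invoking $z''(d-)$. Taylor's theorem with the one-sided derivative $z'(d-)=0$ gives, for each small $h>0$, a point $\xi_h\in(d-h,d)\subset\Omega^{-}$ with $z(d-h)-z(d)=\tfrac{h^{2}}{2}z''(\xi_h)\ge 0$, hence $z''(\xi_h)\ge 0$. Now apply the hypothesis at the interior points $\xi_h$, where it genuinely holds: $0\ge\mathcal{L}z(\xi_h)\ge \mu a(\xi_h)z'(\xi_h)-b(\xi_h)z(\xi_h)$, and as $h\to 0^{+}$ the right-hand side tends to $-b(d)z(d)\ge-\gamma z(d)>0$ because $z'(\xi_h)\to z'(d-)=0$, $z(\xi_h)\to z(d)<0$ and $a$ is bounded; this is a contradiction for small $h$. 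Alternatively one can use the standard integrating-factor argument from the literature: with $A'=a$, on a small interval $(d-\delta,d)$ where $z<0$ one has $\bigl(\epsilon e^{\mu A/\epsilon}z'\bigr)'=e^{\mu A/\epsilon}\bigl(\epsilon z''+\mu a z'\bigr)\le e^{\mu A/\epsilon}bz<0$, so $e^{\mu A/\epsilon}z'$ is strictly decreasing with limit $0$ at $d-$, hence $z'>0$ there, making $z(x)<z(d)$ for $x$ just left of $d$ and contradicting minimality. Either repair closes the case $p=d$; as written, however, the appeal to the ODE is not legitimate.
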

\begin{proof}
	See \cite{TP} for proof.
\end{proof}
%%%%%%%%%%%THEOREM 2.3  --- STABILITY BOUNDS %%%%%%%%%%%%%%%%
\begin{theorem}
	\label{stability}
	Let $y(x)$ be a solution of (\ref{twoparameter}) then $$\|y\|_{\bar{\Omega}}\le \max\{\lvert y(0)\rvert ,\lvert y(1)\rvert \}+\frac{1}{\gamma} \|f\|_{\Omega^{-}\cup \Omega^{+}}.$$
\end{theorem}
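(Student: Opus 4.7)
The plan is to prove the stability estimate by constructing barrier functions and invoking the minimum principle (the preceding theorem). Set
\[
M := \max\{|y(0)|,|y(1)|\} + \tfrac{1}{\gamma}\|f\|_{\Omega^-\cup\Omega^+},
\]
and define the two comparison functions
\[
\psi^{\pm}(x) := M \pm y(x), \qquad x \in \bar{\Omega}.
\]
My goal is to show $\psi^{\pm}(x) \ge 0$ on $\bar{\Omega}$, which is exactly the desired bound $|y(x)| \le M$.

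First I would check the boundary conditions: since $M \ge |y(0)|$ and $M \ge |y(1)|$, we immediately get $\psi^{\pm}(0) \ge 0$ and $\psi^{\pm}(1) \ge 0$. Next, I would compute the action of $\mathcal{L}$ on $\psi^{\pm}$. Because $M$ is constant, $\mathcal{L}M = -b(x)M$, so on $\Omega^-\cup\Omega^+$,
\[
\mathcal{L}\psi^{\pm}(x) = -b(x) M \pm f(x) \le -\gamma M \pm f(x) \le -\|f\|_{\Omega^-\cup\Omega^+} \pm f(x) \le 0,
\]
where the first inequality uses $b(x) \ge \gamma > 0$ and the second uses $\gamma M \ge \|f\|_{\Omega^-\cup\Omega^+}$ by the definition of $M$.

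It remains to verify the jump condition $[(\psi^{\pm})'](d) \le 0$ required by the minimum principle. Since the solution $y$ belongs to $C^1(\Omega)$ by the existence theorem, $[y'](d)=0$, and $M$ is constant so $[M'](d)=0$. Hence $[(\psi^{\pm})'](d) = 0 \le 0$. All hypotheses of the minimum principle (Theorem for $\mathcal{L}$) are satisfied, so $\psi^{\pm}(x) \ge 0$ for every $x \in \bar{\Omega}$, which yields the stated bound.

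There is no genuine obstacle here; the argument is a direct application of the minimum principle. The only subtlety worth flagging is ensuring the jump term is handled correctly: it is precisely the $C^1(\Omega)$ regularity of $y$ that makes $[(\psi^{\pm})'](d)$ vanish, so the minimum principle applies globally across the discontinuity point $d$ rather than only on each subinterval separately.
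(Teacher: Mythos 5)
Your proposal is correct and follows essentially the same route as the paper: the barrier functions $\psi^{\pm}(x)=M\pm y(x)$, the computation $\mathcal{L}\psi^{\pm}=-b(x)M\pm f(x)\le 0$, the vanishing jump $[(\psi^{\pm})'](d)=0$ from the $C^{1}(\Omega)$ regularity of $y$, and the minimum principle. Your write-up simply spells out the inequality chain $-b(x)M\pm f(x)\le -\gamma M\pm f(x)\le 0$ a bit more explicitly than the paper does.
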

%%%%%%%%%%PROOF OF THEOREM 2.3%%%%%%%%%%%%%%%%
\begin{proof}
	Let $\psi^{\pm}(x)=M\pm y(x),$\\
	where $M=\max\{\lvert y(0)\rvert ,\lvert y(1)\rvert \}+\frac{1}{\gamma} \|f\|_{\Omega^{-}\cup \Omega^{+}}$ and $b(x)>\gamma>0~~\forall x\in \Omega.$

Now  $\psi^{\pm}(0)$ and $\psi^{\pm}(1)$ are non negative. 
For each $x\in \Omega^{-}\cup \Omega^{+}.$
	$$\mathcal{L}\psi^{\pm}(x)=\epsilon \psi^{''\pm}(x)+\mu a(x) \psi^{'\pm}(x)-b(x)\psi^{\pm}(x)\le0.$$
Since $y\in C^{0}(\bar{\Omega}) \cap  C^{1}(\Omega)$	\\
 $$[\psi^{\pm}](d)=\pm [y](d)=0~\text{and}~[\psi^{'\pm}](d)=\pm [y'](d)=0.$$
 It follows from the minimum principle that $\psi^{\pm}(x)\ge0, \forall x \in \bar{\Omega}$, which implies
 $$\|y\|_{\bar{\Omega}}\le \max\{\lvert y(0)\rvert ,\lvert y(1)\rvert \}+\frac{1}{\gamma} \|f\|_{\Omega^{-}\cup \Omega^{+}}.$$
\end{proof}
%%%%%%%%%%%%%% END OF PROOF%%%%%%%%%%%%%%%%

%%%%%%%%%%%%%%THEOREM 2.4 --- DERIVATIVE BOUNDS %%%%%%%%%%%
\begin{theorem}
	\label{deribound}
	Let $y(x)$ be the solution of the problem (\ref{twoparameter}) where $\lvert y(0)\rvert  \le C, \lvert y(1)\rvert \le C$ then for $k=1,2$ it holds that 
	
	$$\|y^{(k)}\|_{\Omega^{-}\cup \;  \Omega^{+}} \le \frac{C}{(\sqrt{\epsilon})^{k}}\bigg(1+\left(\frac{\mu}{\sqrt{\epsilon}}\right)^{k}\bigg) \max \big\{\|y\|,\|f\|\big\},$$
	and
	$$\|y^{(3)}\|_{\Omega^{-}\cup \; \Omega^{+}} \le \frac{C}{(\sqrt{\epsilon})^{3}}\bigg(1+\bigg(\frac{\mu}{\sqrt{\epsilon}}\bigg)^{3}\bigg) \max \big\{\|y\|,\|f\|, \|f'\|\big\}.$$
\end{theorem}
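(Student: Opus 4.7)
Since $a$ and $f$ are smooth on each open subinterval $\Omega^-$ and $\Omega^+$, the problem reduces on each to a smooth linear ODE, so I would carry out the argument on $\Omega^+$ only; the reasoning on $\Omega^-$ is identical. Write $M=\max\{\|y\|,\|f\|\}$ and $M'=\max\{\|y\|,\|f\|,\|f'\|\}$.

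For the $k=1$ estimate, for $x\in\Omega^+$ I would pick a small interval $[p,p+h]\subset\bar{\Omega}^+$ with $x\in[p,p+h]$ and $h$ of order $\sqrt{\epsilon}$ (shrinking $h$ near the endpoints $d$ and $1$). The mean value theorem produces $\xi\in(p,p+h)$ with $|y'(\xi)|\le 2\|y\|/h$. Integrating $\epsilon y''=f-\mu a y'+by$ from $\xi$ to $x$ would leave a term $\int_\xi^x\mu a y'\,dt$ that couples the estimate back to $y'$ (a Gronwall-type obstruction); I remove it by integration by parts,
\begin{equation*}
\int_\xi^x \mu a(t) y'(t)\,dt = \mu\bigl[a(t)y(t)\bigr]_\xi^x - \int_\xi^x \mu a'(t) y(t)\,dt,
\end{equation*}
which yields $\epsilon|y'(x)|\le \epsilon|y'(\xi)|+h(\|f\|+C\|y\|)+C\mu\|y\|$. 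Taking $h\sim\sqrt{\epsilon}$ and dividing by $\epsilon$ gives $\|y'\|_{\Omega^+}\le C\epsilon^{-1/2}(1+\mu/\sqrt{\epsilon})M$, the claimed $k=1$ bound.

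For the $k=2$ bound I would rearrange the equation as $y''=\epsilon^{-1}(f-\mu a y'+by)$ and substitute the $k=1$ estimate; this yields a bound proportional to $\epsilon^{-1}(1+\mu/\sqrt{\epsilon}+\mu^2/\epsilon)M$, and the middle term is absorbed using $\mu/\sqrt{\epsilon}\le 1+\mu^2/\epsilon$. For the $k=3$ bound I would differentiate the equation to obtain
\begin{equation*}
\epsilon y''' = f' - \mu a y'' - \mu a' y' + b y' + b' y,
\end{equation*}
and then insert the $k=1,2$ bounds (with $M$ replaced by $M'$). The resulting terms are of orders $\epsilon^{-1}$, $\epsilon^{-3/2}$, $\mu\epsilon^{-2}$ and $\mu^3\epsilon^{-3}$; setting $t=\mu/\sqrt{\epsilon}$ and using the elementary inequalities $t,t^2\le 1+t^3$ for $t\ge 0$, each is majorised by $\epsilon^{-3/2}(1+(\mu/\sqrt{\epsilon})^3)M'$.

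The only non-routine point is the $k=1$ step. The convection term $\mu a y'$ inside the integral must be eliminated before the estimate can be closed, and the integration by parts above accomplishes exactly this at the cost of the boundary contribution $\mu[ay]_\xi^x$, which is itself the source of the $\mu/\sqrt{\epsilon}$ factor in the $k=1$ bound. Once that step is in place, the $k=2,3$ estimates are algebraic substitutions into the equation and its first derivative. The jump at $d$ plays no role inside $\Omega^\pm$, so handling the two subdomains separately is all that is needed.
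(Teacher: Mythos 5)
Your proposal is correct and follows essentially the same route as the paper: a mean value theorem argument on an interval of width $r\sim\sqrt{\epsilon}$ combined with integration by parts to eliminate the $\mu a y'$ term (producing the boundary term responsible for the $\mu/\sqrt{\epsilon}$ factor), then choosing $r=\sqrt{\epsilon}$, with the $k=2,3$ bounds obtained by substituting back into the equation and its derivative exactly as the paper does. No gaps to report.
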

%%%%%%%%PROOF OF THEOREM 2.4 %%%%%%%%%%%%%%%
\begin{proof} We first prove the result for the domain $\Omega^-$. The proof for $\Omega^+ $ follows  the same argument.

	Given any point $x\in(0,d)$, we can construct a neighbourhood $N_{p}=(p, p+r)$ where $r>0$ is such that $x\in N_{p}$ and $N_{p}\subset(0,d)$.  As $y$ is differentiable in $N_{p}$ then the mean value theorem implies that there exists $q\in N_{p}$ such that
	\begin{align*}
	y'(q)&=\frac{y(p+r)-y(p)}{r}\\\implies \lvert y'(q)\rvert &\le\frac{\lvert y(p+r)\rvert +\lvert y(p)\rvert }{r} \le \frac{\|y\| }{r}
	\end{align*}
	Also, $$y'(x)=y'(q)+\int_{q}^{x}y''(\xi)d\xi.$$
	Therefore, from the differential equation \eqref{twoparameter} and using integration by parts, we obtain
	\begin{eqnarray*}
	y(x)&=&y'(q) + \epsilon^{-1}\int_{q}^{x}\left(f(\xi)+ b(\xi)y(\xi)-\mu a(\xi)y'(\xi)\right)d\xi \\
	&=&y'(q)+\epsilon^{-1}\int_{q}^{x}\left(f(\xi)+ b(\xi)y(\xi)+\mu a'(\xi)y(\xi)\right)d\xi -\frac{\mu}{\epsilon}[a(x)y(x)-a(q)y(q)].
	\end{eqnarray*}
	Using the fact that $x-q\le r$ and taking modulus on both sides and after some simplifications, we arrive at the following bound
	$$\lvert y'(x)\rvert \le C\bigg(\frac{1}{r}+\frac{r}{\epsilon}+\frac{\mu}{\epsilon}\bigg)\max\{\|y\|,\|f\|\}.$$
	If we choose $r=\sqrt{\epsilon}$ then the right-hand side of the above expression is minimized with respect to $r$ and we obtain the result for $k=1$,
	$$\|y'\|_{\Omega^{-}} \le \frac{C}{(\sqrt{\epsilon})}\bigg(1+\left(\frac{\mu}{\sqrt{\epsilon}}\right)\bigg) \max \big\{\|y\|,\|f\|\big\},~~ x\in \Omega^{-}.$$
 For $k=2$, the differential equation (\ref{twoparameter}) gives,
 \begin{eqnarray*}
  y^{(2)}(x)&=& \frac{1}{\epsilon} [f(x)+b(x)y(x)-\mu a(x) y'(x)] \\
  \lvert y^{(2)}(x)\rvert&\leq & \frac{1}{\epsilon} \left(\|f\|+\|b\|\|y\| \right)+\frac{\mu}{\epsilon} \|a\|\left(\frac{C}{\sqrt{\epsilon}}\left(1+\frac{\mu}{\sqrt{\epsilon}}\right)\right) \max{\{ \|y\|, \|f\|\}}\\
  &\leq& \frac{C}{\epsilon}\left(1+\frac{\mu}{\sqrt{\epsilon}} + \frac{\mu^2}{\epsilon} \right) \max \big\{\|y\|,\|f\|\big\}.
 \end{eqnarray*}
 On simplifying we arrive at 
 \[\|y^{(2)}\|_{\Omega^-} \leq \frac{C}{(\sqrt{\epsilon})^{2}}\bigg(1+\left(\frac{\mu}{\sqrt{\epsilon}}\right)^{2}\bigg) \max \big\{\|y\|,\|f\|\big\} .\]
  To obtain the required bounds for  $k=3$, we  differentiate the Eq. (\ref{twoparameter}) and arrive at
  \begin{eqnarray*}
   y^{(3)}(x)&=& \frac{1}{\epsilon} [f'(x)+(b(x)y(x)-\mu a(x) y'(x))']. \\
  \end{eqnarray*}
  Taking modulus on both sides and the  bounds for $\|y'\|$ and $\|y''\|$ into consideration, we arrive at,
  \begin{eqnarray*}
  \lvert y^{(3)}(x)\rvert &\leq& \frac{C}{\epsilon \sqrt{\epsilon}}\left( 1+\mu +\sqrt{\epsilon}+\frac{\mu}{\sqrt{\epsilon}}+\frac{\mu^2}{\sqrt{\epsilon}}+\frac{\mu^3}{\epsilon \sqrt{\epsilon}}\right) \max{\{\|y\|,\|f\|,\|f'\|\}}.
  \end{eqnarray*}
  On simplifying, we arrive at
\[\|y^{(3)}\|_{\Omega^{-}} \le \frac{C}{(\sqrt{\epsilon})^{3}}\bigg(1+\bigg(\frac{\mu}{\sqrt{\epsilon}}\bigg)^{3}\bigg) \max \big\{\|y\|,\|f\|, \|f'\|\big\}.\]
\end{proof}
\section{Decomposition of the solution}\label{sec3}
 The bounds presented in the previous section are not sufficient for the error analysis of the discretization methods for the singularly perturbed problems. Thus, to obtain sharp bounds, the solution $y(x)$ is decomposed as in \cite{TP} into layers and regular components as $y(x)=v^{*}(x)+w_{l}^{*}(x)+w_{r}^{*}(x)$. The regular component $v^{*}(x)$ is the solution of 
  \begin{equation}
 \label{regular}
 \left\{
 \begin{array}{ll}
 \displaystyle \mathcal{L}v^{*}(x)=f(x), & x\in \Omega^{-}\cup \Omega^{+},\\
 v^{*}(0)=y(0), & v^{*}(1)=y(1),  \; v^{*}(d-)\; \hbox{ and }  v^{*}(d+) \hbox{ are chosen}.
 \end{array}
 \right.
 \end{equation}
 The singular components $w_{l}^{*}(x)$ and $w_{r}^{*}(x)$ are the solutions of 
\begin{equation} \label{left continuous}
	\begin{aligned} \left\{
 \begin{array}{ll}
 \displaystyle \mathcal{L}w_{l}^{*}(x)=0, & x\in \Omega^{-}\cup \Omega^{+},\\
 w_{l}^{*}(0)=y(0)-v^{*}(0), & w_{l}^{*}(1)=0,\; w_l^*(d-)\hbox{ and }  w_l^{*}(d+) \hbox{ are chosen}. 
 \end{array} \right.
\end{aligned}
\end{equation}
and
\begin{equation} \label{right continuous}
\begin{aligned} \left\{
  \begin{array}{ll} 
 \displaystyle \mathcal{L}w_{r}^{*}(x)=0, & x\in \Omega^{-}\cup \Omega^{+},\\
 w_{r}^{*}(0)=0, & w_{r}^{*}(1)=y(1)-v^{*}(1) 
  \end{array}  \right.
\end{aligned}
\end{equation}
 respectively.
 
 The regular and layer components are further decomposed as
\begin{equation*}
v^{*}(x)=\left\{
 \begin{array}{ll}
 \displaystyle v^{*-}(x), & x\in \Omega^-,\\
 v^{*+}(x), & x\in \Omega^+,
 \end{array}
 \right.\end{equation*}
 $$w_{l}^{*}(x)=\left\{
\begin{array}{ll}
\displaystyle w_{l}^{*-}(x), & x\in \Omega^-,\\
w_{l}^{*+}(x), & x\in \Omega^+,
\end{array}
\right.$$
and
 $$w_{r}^{*}(x)=\left\{
 \begin{array}{ll}
 \displaystyle w_{r}^{*-}(x), &  x\in \Omega^-,\\
 w_{r}^{*+}(x), & x\in \Omega^+.
 \end{array}
 \right.$$
As $y \in \mathcal{C}^1(\Omega)$, we have
$[w_{r}^{*}](d)=-[v^{*}](d)-[w_{l}^{*}](d)$ and $[w_{r}^{*'}](d)=-[v^{*'}](d)-[w_{l}^{*'}](d).$\\
 We will find the bounds on these components for case  $\sqrt{\alpha}\mu \le\sqrt{\rho \epsilon}$ first.
 
   Let us decompose the regular part (similar to \cite{TP}) as $v^{*}(x)=v_{0}^{*}(x)+\sqrt{\epsilon}v_{1}^{*}(x)+\epsilon v_{2}^{*}(x),$ where $v_{0}^{*}(x),v_{1}^{*}(x)$ and $v_{2}^{*}(x)$ be the solution of the following problems:
 \begin{eqnarray*}
 -b(x)v_{0}^{*}(x)&=&f(x), ~~~x\in \Omega^-\cup \Omega^+, \\
 -b(x)v_{1}^{*}(x)&=&-\frac{\mu}{\sqrt{\epsilon}}a(x)v_{0}^{*'}(x)-\sqrt{\epsilon}v_{0}^{*''}(x), ~~~x\in \Omega^-\cup \Omega^+ ,\\
 \mathcal{L}v_{2}^{*}(x)&=&-\frac{\mu}{\sqrt{\epsilon}}a(x)v_{1}^{*'}(x)-\sqrt{\epsilon}v_{1}^{*''}(x)=F(x), ~~~x\in \Omega^-\cup \Omega^+,  \\
 && v_{2}^{*}(0)=v_{2}^{*}(1)=0,~v_{2}^{*}(d-),~v_{2}^{*}(d+) \text{ are chosen suitably,}
 \end{eqnarray*}
 respectively.
 
Also, $v_{2}^{*}(x)\in C^{0}(\bar{\Omega})\cap C^{1}(\Omega)\cap C^{2}(\Omega^{-}\cup \Omega^{+})$.
 %%%%%%%%%%%%%%Theorem 3.1%%%%%%%%%%%%%%%%%
\begin{theorem}
	The regular component $v^{*}(x)$ and its derivatives upto order 3 satisfies the following bounds for $\sqrt{\alpha}\mu\le\sqrt{\rho \epsilon}$\\ 
	$$\|v^{*(k)}\|_{\Omega^{-}\cup
		\Omega^{+}} \le C\bigg(1+\frac{1}{(\sqrt{\epsilon})^{k-2}}\bigg), ~~~~k=0,1,2,3.$$
\end{theorem}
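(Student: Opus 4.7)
My plan is to use the three-term asymptotic decomposition $v^{*}=v_{0}^{*}+\sqrt{\epsilon}\,v_{1}^{*}+\epsilon\, v_{2}^{*}$ supplied in the statement and to bound each piece separately on $\Omega^{-}\cup\Omega^{+}$. The key simplifying observation, used at every step, is that under the hypothesis $\sqrt{\alpha}\,\mu\le\sqrt{\rho\epsilon}$ we have $\mu/\sqrt{\epsilon}\le\sqrt{\rho/\alpha}$, so every factor of the form $(\mu/\sqrt{\epsilon})^{j}$ that appears in the algebra can be absorbed into the generic constant $C$.

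First I would handle $v_{0}^{*}$ and $v_{1}^{*}$. Since $v_{0}^{*}=-f/b$ on each of $\Omega^{\pm}$ and $b\ge\gamma>0$ with $f,b$ smooth on each half, the product/quotient rule immediately gives $\|v_{0}^{*(k)}\|_{\Omega^{-}\cup\Omega^{+}}\le C$ for $k=0,1,2,3$. The $v_{1}^{*}$ equation is also algebraic, namely $v_{1}^{*}=\bigl[(\mu/\sqrt{\epsilon})a\,v_{0}^{*\prime}+\sqrt{\epsilon}\,v_{0}^{*\prime\prime}\bigr]/b$. Since $\mu/\sqrt{\epsilon}$ and $\sqrt{\epsilon}$ are both bounded and $a,b,v_{0}^{*}$ are smooth on each $\Omega^{\pm}$, differentiating up to three times and using the bounds already obtained for $v_{0}^{*}$ yields $\|v_{1}^{*(k)}\|_{\Omega^{-}\cup\Omega^{+}}\le C$ for $k=0,1,2,3$.

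Next I would bound $v_{2}^{*}$. Its forcing term $F=-(\mu/\sqrt{\epsilon})a\,v_{1}^{*\prime}-\sqrt{\epsilon}\,v_{1}^{*\prime\prime}$ satisfies $\|F\|\le C$, and differentiating once and invoking the bounds on $v_{1}^{*\prime\prime}$, $v_{1}^{*\prime\prime\prime}$ gives $\|F'\|\le C$. Because $v_{2}^{*}\in \mathcal{C}^{1}(\Omega)$, the jumps $[v_{2}^{*}](d)$ and $[v_{2}^{*\prime}](d)$ vanish, so Theorem~\ref{stability} applied to $\mathcal{L}v_{2}^{*}=F$ with the homogeneous boundary data yields $\|v_{2}^{*}\|\le C\gamma^{-1}\|F\|\le C$. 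Now Theorem~\ref{deribound} applied to $v_{2}^{*}$ gives, for $k=1,2,3$,
\[
\|v_{2}^{*(k)}\|_{\Omega^{-}\cup\Omega^{+}}\le\frac{C}{(\sqrt{\epsilon})^{k}}\left(1+\left(\frac{\mu}{\sqrt{\epsilon}}\right)^{k}\right)\max\{\|v_{2}^{*}\|,\|F\|,\|F'\|\}\le\frac{C}{(\sqrt{\epsilon})^{k}},
\]
using $\mu/\sqrt{\epsilon}\le C$. Assembling the three estimates by the triangle inequality on $v^{*(k)}=v_{0}^{*(k)}+\sqrt{\epsilon}\,v_{1}^{*(k)}+\epsilon\,v_{2}^{*(k)}$ gives
\[
\|v^{*(k)}\|_{\Omega^{-}\cup\Omega^{+}}\le C+\sqrt{\epsilon}\,C+\epsilon\cdot\frac{C}{(\sqrt{\epsilon})^{k}}\le C\left(1+\frac{1}{(\sqrt{\epsilon})^{k-2}}\right),
\]
which is the desired estimate for $k=0,1,2,3$.

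The main obstacle I anticipate is not the algebra but the bookkeeping at the discontinuity $d$: one must choose the interior values $v_{2}^{*}(d-),v_{2}^{*}(d+)$ (and the companion matching values for the regular components) so that $v_{2}^{*}$ genuinely lies in $\mathcal{C}^{1}(\Omega)$, for only then does the minimum principle underlying Theorem~\ref{stability}, and hence Theorem~\ref{deribound}, apply to $v_{2}^{*}$. Once this matching is in place the argument reduces to the mechanical chain of bounds sketched above.
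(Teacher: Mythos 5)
Your proposal is correct and follows essentially the same route as the paper: the same decomposition $v^{*}=v_{0}^{*}+\sqrt{\epsilon}\,v_{1}^{*}+\epsilon\,v_{2}^{*}$, boundedness of $v_{0}^{*},v_{1}^{*}$ from smoothness, then Theorem \ref{stability} and Theorem \ref{deribound} applied to $v_{2}^{*}$ (absorbing $\mu/\sqrt{\epsilon}\le C$ via $\sqrt{\alpha}\mu\le\sqrt{\rho\epsilon}$), and assembly by the triangle inequality. Your remark about choosing $v_{2}^{*}(d-),v_{2}^{*}(d+)$ so that $v_{2}^{*}\in C^{1}(\Omega)$ is exactly the condition the paper imposes in its construction before invoking those theorems.
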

%%%%%%%%%%%%Proof of theorem 3.1%%%%%%
\begin{proof}
	To bound the regular component $v^{*}(x)$, we need to bound $v_{0}^{*}(x),v_{1}^{*}(x)$ and $v_{2}^{*}(x)$. With sufficient smoothness on the co-efficient $b(x)$ in $\bar{\Omega}$ and $a(x), f(x)$ in $(\Omega^{-}\cup \Omega^{+})$, we observed that $v_{0}^{*}(x),v_{1}^{*}(x)$  and its derivatives are bounded. To bound $v_{2}^{*}(x)$,  Theorem (\ref{stability}) gives
	$$\|v_{2}^{*}(x)\|_{\Omega^{-}\cup
		\Omega^{+}}\le\frac{1}{\gamma}[\|v_{1}^{*'}\|+\|v_{1}^{*''}\|]\le C.$$
	Now by Theorem (\ref{deribound})
	\begin{align*}
	\|v_{2}^{*(k)}(x)\|_{\Omega^{-}\cup
		\Omega^{+}}&\le\frac{C}{(\sqrt{\epsilon})^k}\bigg(1+\left(\frac{\mu}{\sqrt{\epsilon}}\right)^k\bigg)\max \big\{\|v_{2}^{*}\|,\|F\|\big\},~( \|F\| \leq C(\|v_{1}^{*'}\|+\|v_{1}^{*''}\|))\\
		&\le\bigg(\frac{C}{\sqrt{\epsilon}}\bigg)^{k},\hspace{.5cm} \text{for}  ~~k=1,2.\\
	\text{Also}~\|v_{2}^{*(3)}(x)\|_{\Omega^{-}\cup
		\Omega^{+}} &\le\left(\frac{C}{\sqrt{\epsilon}}\right)^3\bigg(1+\left(\frac{\mu}{\sqrt{\epsilon}}\right)^3\bigg)\max \big\{\|v_{2}^{*}\|,\|F\|, \|F^{(1)}\|\big\}\\
		&\le\bigg(\frac{C}{\sqrt{\epsilon}}\bigg)^{3}.
	\end{align*}
Using the bounds for $v_{0}^*, v_{1}^*, v_{2}^*$  and its derivatives in the expression for $v^{*}(x)$, we have
$$\|v^{*(k)}\|_{\Omega^{-}\cup
	\Omega^{+}} \le C\bigg(1+\frac{1}{(\sqrt{\epsilon})^{k-2}}\bigg), ~~~~k=0,1,2,3.$$
\end{proof}
\begin{theorem}
	\label{boundswl}
    Let $\sqrt{\alpha}\mu\le\sqrt{\rho \epsilon}$. The singular components $w_{l}^{*}(x)$ and $ w_{r}^{*}(x)$  and their derivatives up to order 3 satisfy the following bounds for $k=0,1,2,3$\\
\[\|w_{l}^{*(k)}(x)\|_{\Omega^{-}\cup
		\Omega^{+}} \le \frac{C}{(\sqrt{\epsilon})^{k}}\left\{
	\begin{array}{ll}
	\displaystyle e^{-\theta_{2}x}, &  \; x\in \Omega^{-},\\
	e^{-\theta_{1}(x-d)}, & \; x\in \Omega^{+},
	\end{array}
	\right.\]
	\[\|w_{r}^{*(k)}(x)\|_{\Omega^{-}\cup
		\Omega^{+}} \le \frac{C}{(\sqrt{\epsilon})^{k}}\left\{
	\begin{array}{ll}
	\displaystyle e^{-\theta_{1}(d-x)}, &  \; x\in \Omega^{-},\\
	e^{-\theta_{2}(1-x)}, & x\in \Omega^{+},
	\end{array}
	\right.\]
 where,
	\[ \theta_{1}=\theta_{2} = \displaystyle \frac{\sqrt{\rho \alpha}}{2\sqrt{\epsilon}}.\]

\end{theorem}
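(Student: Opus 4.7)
The plan is to bound $w_l^*$ and $w_r^*$ by constructing exponential barriers tailored to the reaction-dominated regime, applying the minimum principle on each subinterval, and then deducing the derivative bounds from the differential equation together with a localized mean-value argument. By the symmetric role of $w_l^*$ and $w_r^*$ it suffices to treat $w_l^*$ in detail; the bound on $w_r^*$ follows by reversing orientations. The exponent $\theta_i=\sqrt{\rho\alpha}/(2\sqrt{\epsilon})$ is the natural reaction-diffusion scale, chosen so that $\epsilon\theta_i^2=\rho\alpha/4$.

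\textbf{Step 1 (the $k=0$ bound).} On each subinterval I would use the barrier
\[
\phi(x)=C\begin{cases} e^{-\theta_2 x}, & x\in\Omega^{-},\\ e^{-\theta_1(x-d)}, & x\in\Omega^{+},\end{cases}
\]
with $C$ large enough to dominate $|w_l^*(0)|$ and the chosen interface values $w_l^*(d\pm)$. On $\Omega^{+}$, where $a\ge\alpha_2>0$ and $\phi$ is decreasing, the convection term is favourable and $\mathcal{L}\phi\le0$ is immediate from $\epsilon\theta_1^2\le b(x)$. On $\Omega^{-}$ the convection term is adverse, but combining $b(x)\ge\rho|a(x)|\ge\rho\alpha$ with the case hypothesis $\sqrt{\alpha}\mu\le\sqrt{\rho\epsilon}$ (which translates to $\mu\theta_2\le\rho/2$) makes the bracket $\epsilon\theta_2^2-\mu a(x)\theta_2-b(x)\le \rho\alpha/4-\rho\alpha/2<0$. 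Applying the minimum principle on each subinterval (no interior jump condition is needed there) gives $|w_l^*|\le\phi$, i.e.\ the $k=0$ bound.

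\textbf{Step 2 (the derivative bounds).} The $k=2$ estimate follows algebraically from rearranging \eqref{twoparameter},
\[
 w_l^{*\prime\prime}(x)=\frac{1}{\epsilon}\bigl[b(x)w_l^*(x)-\mu a(x)w_l^{*\prime}(x)\bigr],
\]
since $\mu/\sqrt{\epsilon}$ is $O(1)$ in the current regime; differentiating once more yields $k=3$. The nontrivial step is $k=1$: for $x$ in the interior of $\Omega^{-}$ or $\Omega^{+}$ I would repeat the mean-value/integration-by-parts argument of Theorem~\ref{deribound} on a neighbourhood of radius $r=\sqrt{\epsilon}$ around $x$ (shrunk to $\min(\sqrt{\epsilon},\mathrm{dist}(x,\{0,d,1\}))$ near the endpoints), but now replacing $\|y\|$ and $\|f\|$ by the pointwise bound $\phi(x)$ from Step~1 and using $e^{-\theta_i\xi}\le Ce^{-\theta_i x}$ for $\xi$ in the neighbourhood to preserve the exponential envelope.

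\textbf{Main obstacle.} The principal delicacy lies in choosing the free interface values $w_l^*(d^{\pm})$ (and analogously $v^*(d^{\pm}),w_r^*(d^{\pm})$) so that each is dominated by the corresponding barrier (this is what permits Step~1 on each subinterval) while simultaneously satisfying the compatibility identities $[w_r^*](d)=-[v^*](d)-[w_l^*](d)$ and $[w_r^{*\prime}](d)=-[v^{*\prime}](d)-[w_l^{*\prime}](d)$ needed to reconstruct $y$ as a $C^{1}$ function. The cleanest route, following \cite{TP}, is to specify the interface values sequentially in the order $v^*\to w_l^*\to w_r^*$, at each stage taking the value at $d^{\pm}$ to be of the same order as the corresponding barrier, which also guarantees that the mean-value argument in Step~2 survives up to the discontinuity.
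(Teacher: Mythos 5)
Your proposal follows essentially the same route as the paper: the $k=0$ bounds come from the same exponential barriers $Ce^{-\theta_2 x}$, $Ce^{-\theta_1(x-d)}$ (and their mirror images for $w_r^*$) together with the minimum principle on each subinterval, and the derivative bounds come from the localized mean-value/integration-by-parts argument of Theorem \ref{deribound}, using $\mu/\sqrt{\epsilon}\le C$ and the fact that $e^{-\theta_i \xi}$ varies only by an $O(1)$ factor over windows of width $\sqrt{\epsilon}$ — which is exactly how the paper's brief appeal to Theorem \ref{deribound} is meant to be read. One small correction: near $0$, $d$, $1$ you should slide the window (as in the proof of Theorem \ref{deribound}, where $N_p=(p,p+r)$ need only contain $x$, not be centred at it) rather than shrink its radius to $\mathrm{dist}(x,\{0,d,1\})$, since shrinking $r$ would blow up the $1/r$ term and lose the $C/\sqrt{\epsilon}$ bound at points very close to the subinterval endpoints.
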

%Note: We have proved the above result for $\theta_{1}=\frac{\sqrt{\rho \alpha}}{2\sqrt{\epsilon}}$ and $\theta_{2}=\frac{\sqrt{\rho \alpha}}{2\sqrt{\epsilon}}$ also.
\begin{proof}
	Consider a barrier function $\xi_{\pm}(x)=Ce^{-\theta_{2}x}\pm w_{l}^{*-}(x),~~~x\in\Omega^{-}=(0,d).$
	For a large C, $\xi_{\pm}(0)\ge0$ and $\xi_{\pm}(d)= Ce^{-\theta_2 d} \pm w_l^{*-}(d^-)\ge0$.
	Now 
	\begin{align*}
	\mathcal{L}\xi_{\pm}(x)&=Ce^{-\theta_{2}x}(\epsilon\theta_{2}^{2}-\mu a(x)\theta_{2}-b(x))\\&\le Ce^{-\theta_{2}x}\bigg(\frac{\rho\alpha}{4}-a(x)\frac{\mu}{\sqrt{\epsilon}}\frac{\sqrt{\rho\alpha}}{2}-b(x)\bigg)\\&\le Ce^{-\theta_{2}x}(\rho\lvert a(x)\rvert -b(x))\\&\le0.
	\end{align*}
	Therefore 
	$$\|w_{l}^{*-}\|\le Ce^{-\theta_{2}x},~x\in\Omega^{-}.$$
	Similarly  choose a barrier function $\xi_{\pm}(x)= C e^{-\theta_{1}(x-d)} \pm w_l^{*+}(x), \; x \in \Omega^+$ with large $C$. Now $\xi_{\pm}(d) \ge 0, \xi_{\pm}(1)\ge 0$ with $\mathcal{L}\xi_{\pm}(x) \le 0$ gives 
	\[\|w_{l}^{*+}\|\le C e^{-\theta_{1}(x-d)}, x \in \Omega^+.\] 
	Using Theorem (\ref{deribound}) on $\Omega^-$ and $\Omega^+$, we obtain the following bounds for the derivatives of $w_l^*$ up to order 3,
\[\|w_{l}^{*(k)}\|\le \frac{C}{(\sqrt{\epsilon})^{k}} \left\{ \begin{array}{ll} \displaystyle e^{-\theta_{2}x},& x\in\Omega^{-},\\
\displaystyle e^{-\theta_{1}(x-d)},& x\in\Omega^{+}.\end{array}\right. \]
	Consider a barrier function $\xi_{\pm}(x)=Ce^{-\theta_{1}(d-x)}\pm w_{r}^{*-}(x),~x\in\Omega^{-}=(0,d).$
	For any large C, $\xi_{\pm}(0)\ge0$ and $\xi_{\pm}(d)\ge0$.
	Now 
	\begin{align*}
	\mathcal{L}\xi_{\pm}(x)&=Ce^{-\theta_{1}(d-x)}(\epsilon\theta_{1}^{2}+\mu a(x)\theta_{1}-b(x))\\&\le Ce^{-\theta_{1}(d-x)}\bigg(\frac{\rho\alpha}{4}+a(x)\frac{\mu}{\sqrt{\epsilon}}\frac{\sqrt{\rho\alpha}}{2}-b(x)\bigg)\\&\le Ce^{-\theta_{1}(d-x)}(\frac{\rho \alpha}{4}+\frac{\rho a(x)}{2}-b(x))\\&\le Ce^{-\theta_{1}(d-x)}(-\frac{\rho \alpha}{4}-b(x))\\&\le0.
	\end{align*}
	Therefore 
	$$\|w_{r}^{*-}\|\le Ce^{-\theta_{1}(d-x)},~x\in\Omega^{-}.$$
	For $x \in \Omega^+=(d,1),$ choose the barrier function $\xi_{\pm}(x)=Ce^{-\theta_{2}(1-x)}\pm w_{r}^{*+}(x),~x\in\Omega^{+}$, with large $C$. This gives 
	$\xi_{\pm}(d)\ge0, \xi_{\pm}(1)\ge0$ and $\mathcal{L}\xi_{\pm}(x) \leq 0, x \in \Omega^+$ gives 
	\[\|w_{r}^{*+}\|\le C e^{-\theta_{2}(1-x)}, x \in \Omega^+.\]
	By Theorem (\ref{deribound}), we have the following bounds for the derivatives of $w_r^*$ of order up to 3,
	$$\|w_{r}^{*(k)}\|\le  \frac{C}{(\sqrt{\epsilon})^{k}} \left\{ \begin{array}{ll}
	e^{-\theta_{1}(d-x)},&~x\in\Omega^{-}, \\
 e^{-\theta_{2}(1-x)}, &~x\in\Omega^{+}.
 \end{array}\right. $$
\end{proof}

Consider the  case: $\sqrt{\alpha}\mu>\sqrt{\rho \epsilon}$.

Let $v^*$ be the regular component of the solution $y$ of the Eq. \eqref{twoparameter}. Let us decompose it \cite{TP} as

$v^{*}(x)=v_{0}^{*}(x)+\epsilon v_{1}^{*}(x)+\epsilon^{2}v_{2}^{*}(x),$ where $v_{0}^{*}(x),v_{1}^{*}(x)$ and $v_{2}^{*}(x)$ are the solution of the following problems respectively:
\begin{eqnarray*} 
\mathcal{L}_{\mu} v_0^*(x) &\equiv& \mu a(x)v_{0}^{*'}(x)-b(x)v_{0}^{*}(x)=f(x), ~~x\in \Omega^-\cup \Omega^+,~ v_{0}^{*}(x)=y(0),~v_{0}^{*}(1)=y(1),\\
\mathcal{L}_{\mu} v_1^*(x) &=& -v_{0}^{*''}(x), ~~~x\in \Omega^-\cup \Omega^+,~~v_{1}^{ *}(0)=v_{1}^{*}(1)=0,\\
\mathcal{L}v_{2}^{*}(x)&=&-v_{1}^{*''}(x), ~~~x\in \Omega^-\cup \Omega^+, v_{2}^{*}(0)=v_{2}^{*}(1)=0, 
\end{eqnarray*}
$v_{2}^{*}(d-),~v_{2}^{*}(d+) ~$\text{ are chosen suitably, and} ~$v_{2}^{*}(x)\in C^{0}(\bar{\Omega})\cap C^{1}(\Omega)\cap C^{2}(\Omega^{-}\cup \Omega^{+})$.
%%%%%%%%theorem 3.3%%%%%%%%%%%%%%%%%%%%%%%%%%

The proof of the next theorem follows the argument presented in \cite[Section~ 3]{GRP}  closely.
\begin{theorem}
	Let  $\sqrt{\alpha}\mu>\sqrt{\rho \epsilon}$. The regular component $v^{*}(x)$ and its derivatives up to order 3 satisfies the following bounds\\ 
	$$\|v^{*(k)}\|_{\Omega^{-}\cup
		\Omega^{+}} \le C\bigg(1+\bigg(\frac{\epsilon}{\mu}\bigg)^{(2-k)}\bigg),~~~~k=0,1,2,3.$$
\end{theorem}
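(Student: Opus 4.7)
My plan is to bound the three pieces of the decomposition $v^{*}=v_{0}^{*}+\epsilon v_{1}^{*}+\epsilon^{2}v_{2}^{*}$ separately. The first two are governed by the purely first-order reduced operator $\mathcal{L}_{\mu}$ and carry no $\epsilon$-degeneracy in their estimates, while only the last piece feels the full $\mathcal{L}$ and therefore is where the $\mu,\epsilon$ dependence of the final bound actually arises. The case hypothesis $\sqrt{\alpha}\mu>\sqrt{\rho\epsilon}$ will be used at exactly one point, namely to collapse the two-parameter factor produced by Theorem~\ref{deribound}.

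First I would establish that $v_{0}^{*}$ and its derivatives up to order $3$ are bounded by a constant independent of $\epsilon$ and $\mu$. Since $v_{0}^{*}$ solves a first-order linear problem with smooth coefficients on each of $\Omega^{\pm}$ and $b\ge\gamma>0$, the asymptotic expansion $v_{0}^{*}=-f/b+O(\mu)$ can be justified term by term and every derivative of $v_{0}^{*}$ turns out to be bounded uniformly in $\mu$; this is precisely the argument of \cite[Sec.~3]{GRP} to which the paper defers. The same machinery applied to $\mathcal{L}_{\mu}v_{1}^{*}=-v_{0}^{*\prime\prime}$, whose right-hand side is now $O(1)$ with bounded derivatives, yields $\|v_{1}^{*(k)}\|_{\Omega^{-}\cup\Omega^{+}}\le C$ for $k=0,1,2,3$.

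Next I would estimate $v_{2}^{*}$, which satisfies the full equation $\mathcal{L}v_{2}^{*}=-v_{1}^{*\prime\prime}$ with homogeneous Dirichlet data at $0,1$ and compatible transmission data at $d$. Theorem~\ref{stability} immediately gives $\|v_{2}^{*}\|\le C\|v_{1}^{*\prime\prime}\|/\gamma\le C$. Applying Theorem~\ref{deribound} subdomain-wise,
\[
\|v_{2}^{*(k)}\|_{\Omega^{-}\cup\Omega^{+}}\le\frac{C}{(\sqrt{\epsilon})^{k}}\left(1+\left(\frac{\mu}{\sqrt{\epsilon}}\right)^{\!k}\right)\max\{\|v_{2}^{*}\|,\|v_{1}^{*\prime\prime}\|,\|v_{1}^{*\prime\prime\prime}\|\}\qquad(k=1,2,3),
\]
and the hypothesis $\mu/\sqrt{\epsilon}\ge\sqrt{\rho/\alpha}$ makes the factor in parentheses comparable to $(\mu/\sqrt{\epsilon})^{k}$, so that $\|v_{2}^{*(k)}\|\le C(\mu/\epsilon)^{k}$. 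Finally, assembling $v^{*(k)}=v_{0}^{*(k)}+\epsilon v_{1}^{*(k)}+\epsilon^{2}v_{2}^{*(k)}$ and using the identity $\epsilon^{2-k}\mu^{k}=\mu^{2}(\epsilon/\mu)^{2-k}\le(\epsilon/\mu)^{2-k}$ (valid since $\mu<1$) delivers the stated bound $\|v^{*(k)}\|\le C(1+(\epsilon/\mu)^{2-k})$ for $k=0,1,2,3$.

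The main obstacle I foresee is the first step: obtaining derivative bounds for $v_{0}^{*}$ (and hence $v_{1}^{*}$) that are genuinely uniform in $\mu$. The reduced operator $\mathcal{L}_{\mu}$ is very sensitive to how the values at $0$, $1$, and $d^{\pm}$ are prescribed, and a careless choice would plant a $\mu$-dependent boundary layer inside $v_{0}^{*}$ itself, which would in turn destroy the $k=3$ estimate. Handling this correctly is where \cite[Sec.~3]{GRP} does the real work; once those uniform bounds are in hand, the remaining steps are essentially mechanical applications of Theorems~\ref{stability} and \ref{deribound}.
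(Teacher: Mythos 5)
Your overall architecture is the same as the paper's (decompose $v^{*}=v_{0}^{*}+\epsilon v_{1}^{*}+\epsilon^{2}v_{2}^{*}$, bound $v_{2}^{*}$ via Theorem~\ref{stability} and Theorem~\ref{deribound}, use $\sqrt{\alpha}\mu>\sqrt{\rho\epsilon}$ to collapse the factor $1+(\mu/\sqrt{\epsilon})^{k}$), but the step you wave through is exactly where your argument breaks. The claim that $\|v_{1}^{*(k)}\|_{\Omega^{-}\cup\Omega^{+}}\le C$ for $k=0,1,2,3$ is false, and so is the premise it rests on, namely that $-v_{0}^{*\prime\prime}$ is ``$O(1)$ with bounded derivatives.'' The reduced problems $\mathcal{L}_{\mu}v_{0}^{*}=f$ and $\mathcal{L}_{\mu}v_{1}^{*}=-v_{0}^{*\prime\prime}$, being first--order problems with prescribed boundary data, unavoidably contain layer correctors of width $O(\mu)$; they cannot be removed by a clever choice of data, only accounted for. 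The paper does this by the further splittings $v_{0}^{*}=s_{0}+\mu s_{1}+\mu^{2}s_{2}+\mu^{3}s_{3}$ with $\|s_{3}^{(i)}\|\le C\mu^{-i}$, and $v_{1}^{*}=\rho_{0}+\mu\rho_{1}+\mu^{2}\rho_{2}$ with $\|\rho_{2}^{(i)}\|\le C\mu^{-(i+1)}$, which yield only $\|v_{0}^{*(i)}\|\le C(1+\mu^{3-i})$ and $\|v_{1}^{*(i)}\|\le C(1+\mu^{1-i})$. In particular $\|v_{1}^{*\prime\prime}\|\sim\mu^{-1}$ and $\|v_{1}^{*(3)}\|\sim\mu^{-2}$, so your subsequent steps $\|v_{2}^{*}\|\le C$ and ``$\max\{\|v_{2}^{*}\|,\|v_{1}^{*\prime\prime}\|,\|v_{1}^{*(3)}\|\}\le C$'' are not justified, and your stated intermediate bound $\|v_{2}^{*(k)}\|\le C(\mu/\epsilon)^{k}$ does not follow from what you proved.

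The gap is repairable rather than fatal: if you carry the correct (weaker) bounds through your own pipeline, the negative powers of $\mu$ coming from $v_{1}^{*\prime\prime}$ and $v_{1}^{*(3)}$ are absorbed by the positive powers of $\mu$ in the factor $(\mu/\sqrt{\epsilon})^{k}$ and by the prefactors $\epsilon$, $\epsilon^{2}$; one gets, e.g., $\|v_{2}^{*(3)}\|\le C\mu/\epsilon^{3}$ and hence $\epsilon^{2}\|v_{2}^{*(3)}\|\le C\mu/\epsilon$, which is exactly what the stated bound $C(1+(\epsilon/\mu)^{2-k})$ requires at $k=3$, and similarly for $k=1,2$ (your observation $\epsilon^{2-k}\mu^{k}\le(\epsilon/\mu)^{2-k}$ is fine but is applied to a bound you have not established). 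So the missing ingredient is precisely the layer-corrector sub-decomposition of $v_{0}^{*}$ and $v_{1}^{*}$ (the real content of \cite[Sec.~3]{GRP} that the paper reproduces); you correctly identified this as the main obstacle in your closing paragraph, but then asserted it away instead of resolving it, which leaves the proof as written incomplete.
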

\begin{proof} 
	For $x \in \Omega^-, $ the coefficient $a<0$  and $b>0$. Hence, we have that

	\begin{equation}
	\label{firstmin}
	\mathcal{L}_{\mu}z(x)\lvert_{(0,d)}\le0 ~\text{and}~ z(0)\ge0, ~\text{then}~ z(x)\big\rvert _{[0,d)}\ge0.
	\end{equation} 
	Also for  $x \in \Omega^+, $ the coefficients $a>0$  and $b>0$, we have the following result
	\begin{equation}
	\label{firstmin1}
	\mathcal{L}_{\mu}z(x)\lvert_{(d,1)}\le0 ~\text{and}~ z(1)\ge0, ~\text{then}~ z(x)\big\rvert_{(d,1]}\ge0.
	\end{equation}

	We further decompose the component $v^{*}_{0}(x), x\in\Omega^{-}\cup\Omega^{+}$, as follows ,
\[	v^{*}_{0}(x)=s_{0}(x)+\mu s_{1}(x)+\mu^{2}s_{2}(x)+\mu^{3}s_{3}(x),\]
	where	$\displaystyle s_{0}(x)=\frac{-f(x)}{b(x)},~~~s_{1}(x)=\frac{a(x)s_{0}'(x)}{b(x)},~~~s_{2}(x)=\frac{a(x)s_{1}'(x)}{b(x)},$ and
	\begin{equation}
	\label{s3}
	\mathcal{L}_{\mu}s_{3}(x)=-a(x)s_{2}'(x), x \in \Omega^-\cup \Omega^+,~~s_{3}(0)=s_{3}(1)=0.
	\end{equation}
	Assuming sufficient smoothness of the coefficients, the $s_i,i=0,1,2$ and its derivatives are bounded independently of the perturbation parameter $\mu$. In particular, if $ b \in \mathcal{C}^7(\Omega), a,f \in \mathcal{C}^7(\Omega^-\cup \Omega^+)$ we have
	\begin{eqnarray*} \|s_{0}^{(i)}\|&\le& C,~~~~0\le i \le 7,\\
	\|s_{1}^{(i)}\|&\le& C,~~~~0\le i \le 6,\\
\|s_{2}^{(i)}\|&\le& C,~~~~~0\le i \le 5.
\end{eqnarray*}
	Using (\ref{firstmin}) and (\ref{firstmin1}) we deduce that $\|s_{3}\|\le C$ and then from (\ref{s3}) we obtain
$$\|s_{3}^{(i)}\|\le \frac{C}{\mu^{i}},~~~~0\le i \le 5.$$
We use these bounds for $s_{0}(x),s_{1}(x),s_{2}(x)$ and $s_{3}(x)$ to obtain \[
\|v_{0}^{*(i)}\|_{\Omega^-\cup \Omega^+}\le C\bigg(1+\frac{1}{\mu^{i-3}}\bigg), 0\le i \le 5.\]
Now to bound $v_{1}^{*}(x)$ we decompose $v_{1}^{*}(x), x \in\Omega^{-}\cup\Omega^{+}$, as follows\\
$$v^{*}_{1}(x)=\rho_{0}(x)+\mu \rho_{1}(x)+\mu^{2}\rho_{2}(x),$$
where	$ \displaystyle \rho_{0}(x)=\frac{v_{0}^{*''}(x)}{b(x)}, \; \rho_{1}(x)=\frac{a(x)\rho_{0}'(x)}{b(x)},$ and\\
\begin{eqnarray}
\label{rho2}
\mathcal{L}_{\mu}\rho_{2}(x)&=&-a(x)\rho_{1}'(x),\; x \in\Omega^{-}\cup\Omega^{+},\\
\rho_{2}(0)&=&\rho_{2}(1)=0 \nonumber.
\end{eqnarray}
Assuming sufficient smoothness of the coefficients, we have
$$\|\rho_{0}^{(i)}\|_{\Omega^{-}\cup\Omega^{+}}\le C\bigg(1+\frac{1}{\mu^{i-1}}\bigg),~~~~0\le i \le 5$$
and
$$\|\rho_{1}^{(i)}\|_{\Omega^{-}\cup\Omega^{+}}\le \frac{C}{\mu^{i}},~~~~0\le i \le 4$$
Using (\ref{firstmin}), (\ref{firstmin1}) and (\ref{rho2}) we obtain
$$\|\rho_{2}^{(i)}\|_{\Omega^{-}\cup\Omega^{+}}\le \frac{C}{\mu^{i+1}}~~~~0\le i \le 4.$$
We use these bounds for $\rho_{0}(x),\rho_{1}(x)$ and $\rho_{2}(x)$ to obtain 
\[\|v_{1}^{*(i)}\|_{\Omega^{-}\cup\Omega^{+}}\le C(1+\mu^{1-i}), \quad 0\le i \le 3.\]
To bound $v_{2}^{*}(x), \; x\in\Omega^{-}\cup\Omega^{+}$ we use the differential equation satisfied by it.
\begin{equation}
\label{v2}
\mathcal{L}v_{2}^{*}(x)=-v_{1}^{*''}(x),~~v^{*}_{2}(0)=v^{*}_{2}(1)=0.
~~v_{2}^{*}(d-), v_{2}^{*}(d+)~\text{are chosen.}
\end{equation}
Application of Theorem (\ref{stability}) gives
$$\|v_{2}^{*}\|_{\Omega^{-}\cup\Omega^{+}}\le \max\{\lvert v_{2}^{*}(0)\rvert,\lvert v_{2}^{*}(1)\rvert \}+\frac{1}{\gamma}\|v_{1}^{*''}\|\le\frac{C}{\mu^{2}}.$$
By Theorem (\ref{deribound}) we have
$$\|v_{2}^{*(i)}\|_{\Omega^{-}\cup\Omega^{+}}\le \frac{C}{\sqrt{\epsilon}^{(i)}}\bigg(1+\bigg(\frac{\mu}{\sqrt{\epsilon}}\bigg)^{i}\bigg)\frac{1}{\mu^{2}},~~\text{for}~  i=1,2.$$
Differentiating the equation (\ref{v2}) gives
$$\|v_{2}^{*(3)}\|_{\Omega^{-}\cup\Omega^{+}}\le C\frac{\mu}{\epsilon^{3}}.$$
Substituting  these bounds for $v_{0}^{*}(x),v_{1}^{*}(x), v_{2}^{*}(x)$ and their derivatives into the equation for $v^{*}(x)$ gives us
$$\|v^{*(k)}\|_{\Omega^{-}\cup
	\Omega^{+}} \le  C\bigg(1+\bigg(\frac{\epsilon}{\mu}\bigg)^{(2-k)}\bigg),~~~~k=0,1,2,3.$$
\end{proof}
\begin{theorem}
	\label{wr}
	Let $\sqrt{\alpha}\mu>\sqrt{\rho \epsilon}$. The singular components $w_{l}^{*}(x)$ and $w_{r}^{*}(x)$ satisfy the following bounds for $k=0,1,2,3$\\
	$$\|w_{l}^{*(k)}(x)\|_{\Omega^{-}\cup
		\Omega^{+}} \le C\left\{
	\begin{array}{ll}
	\displaystyle \bigg(\frac{1}{\mu}\bigg)^{k}e^{-\theta_{2}x}, & \hbox{ $x\in \Omega^{-}$, }\\
	\displaystyle\bigg(\frac{\mu}{\epsilon}\bigg)^{k}e^{-\theta_{1}(x-d)}, & \hbox{ $x\in \Omega^{+}$,}
	\end{array}
	\right.$$
	$$\|w_{r}^{*(k)}(x)\|_{\Omega^{-}\cup
		\Omega^{+}} \le C\left\{
	\begin{array}{ll}
	\displaystyle \bigg(\frac{\mu}{\epsilon}\bigg)^{k}e^{-\theta_{1}(d-x)}, & \hbox{ $x\in \Omega^{-}$, }\\
	\displaystyle\bigg(\frac{1}{\mu}\bigg)^{k}e^{-\theta_{2}(1-x)}, & \hbox{ $x\in \Omega^{+}$,}
	\end{array}
	\right.$$
	where\\
\[\theta_{1}= \frac{\alpha\mu}{2\epsilon}, \quad \theta_{2}=\frac{\rho}{2\mu}. \]
\end{theorem}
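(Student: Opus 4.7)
The plan is to mirror the argument of Theorem \ref{boundswl}, but with decay rates adapted to the convection-dominated regime $\sqrt{\alpha}\mu > \sqrt{\rho\epsilon}$. I would first establish pointwise bounds on $w_l^*$ and $w_r^*$ on each of $\Omega^-$ and $\Omega^+$ separately by constructing exponential barrier functions with the appropriate decay rate (either $\theta_1$ or $\theta_2$), and then lift these to derivative bounds.

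For the pointwise bound on $w_l^{*-}$ on $\Omega^-$, I would use $\xi_\pm(x) = Ce^{-\theta_2 x} \pm w_l^{*-}(x)$. Boundary positivity at $x=0$ and $x=d^-$ follows for $C$ large; the critical computation is
\[
\mathcal{L}(Ce^{-\theta_2 x}) = Ce^{-\theta_2 x}\bigl(\epsilon \theta_2^2 - \mu a(x)\theta_2 - b(x)\bigr).
\]
On $\Omega^-$, $a<0$ so $-\mu a\theta_2 = |a|\rho/2$, while the case hypothesis $\alpha\mu^2 > \rho\epsilon$ combined with $|a|\ge\alpha$ gives $\epsilon\theta_2^2 = \epsilon\rho^2/(4\mu^2) < \alpha\rho/4 \le |a|\rho/4$. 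Hence $\epsilon\theta_2^2 - \mu a\theta_2 - b \le \tfrac{3}{4}\rho|a| - b \le \rho|a| - b \le 0$, the last step by the definition of $\rho$. For $w_l^{*+}$ on $\Omega^+$, the barrier $\xi_\pm(x) = Ce^{-\theta_1(x-d)} \pm w_l^{*+}(x)$ satisfies $\epsilon\theta_1^2 - \mu a\theta_1 \le \alpha^2\mu^2/(4\epsilon) - \alpha^2\mu^2/(2\epsilon) = -\alpha^2\mu^2/(4\epsilon) \le 0$ using $a\ge\alpha>0$, and hence $\mathcal{L}\xi_\pm \le 0$. Symmetric barriers handle $w_r^{*-}$ and $w_r^{*+}$.

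For the derivative bounds, the fast-decay side (with target factor $(\mu/\epsilon)^k$) follows by applying Theorem \ref{deribound} on suitably chosen subintervals, since in this regime the generic factor $(1/\sqrt{\epsilon})^k(1+(\mu/\sqrt{\epsilon})^k)$ is dominated by $C(\mu/\epsilon)^k$ (using $(\sqrt{\epsilon}/\mu)^k \le C$); combined with the pointwise exponential bound this gives the asserted estimate. The main obstacle is the slow-decay side, where the sharp prefactor is $(1/\mu)^k$ while the generic derivative estimate only delivers $(1/\sqrt{\epsilon})^k$, which is too coarse in this regime. To close this gap I would exploit the convection-dominated reduced operator: from $\mathcal{L}w_l^* = 0$ one has $\mu a\, w_l^{*'} = bw_l^* - \epsilon w_l^{*''}$, so $w_l^{*'}$ is controlled by $w_l^*$ and $\epsilon w_l^{*''}$, and an inductive scheme obtained by repeated differentiation of the ODE, parallel to the argument of \cite{GRP} cited in the preceding regular-component theorem, should yield the required $(1/\mu)^k$ derivative bounds. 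This last inductive step is where I expect most of the technical effort to lie.
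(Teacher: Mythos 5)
For the pointwise ($k=0$) bounds your argument coincides with the paper's proof: the same barriers $Ce^{-\theta_2 x}\pm w_l^{*-}$ and $Ce^{-\theta_1(d-x)}\pm w_r^{*-}$ on $\Omega^-$, the same chain $\epsilon\theta_2^2-\mu a\theta_2-b\le \rho\lvert a\rvert-b\le 0$ using the case hypothesis and the definition of $\rho$, the same cancellation $\epsilon\theta_1^2-\mu a\theta_1\le-\alpha^2\mu^2/(4\epsilon)$ on the fast side, and the same remark that $\Omega^+$ is handled symmetrically. So on the part the paper actually proves, you match it.

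Where you diverge is the derivative bounds $k=1,2,3$: the paper gives no argument at all and simply invokes the techniques of \cite{FHMRS1,RPS}, whereas you attempt a sketch, and that sketch has two soft spots. First, on the fast-decay side you cannot get $(\mu/\epsilon)^k e^{-\theta_1(x-d)}$ by applying Theorem \ref{deribound} as stated on subintervals: its proof uses neighborhoods of width $r=\sqrt{\epsilon}$, and the local supremum of the layer over such a neighborhood costs a factor $e^{\theta_1\sqrt{\epsilon}}=e^{\alpha\mu/(2\sqrt{\epsilon})}$, which is unbounded precisely in the regime $\sqrt{\alpha}\mu>\sqrt{\rho\epsilon}$; one must re-run the mean-value/integration argument with $r\sim\epsilon/\mu$ (which does deliver a factor $\mu/\epsilon$ per derivative and a harmless $e^{\theta_1\epsilon/\mu}=e^{\alpha/2}$), so your prefactor observation is right but the localization step is missing. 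Second, on the slow-decay side the identity $\mu a\,w_l^{*\prime}=b\,w_l^{*}-\epsilon w_l^{*\prime\prime}$ controls $w_l^{*\prime}$ only if $\epsilon w_l^{*\prime\prime}$ is already controlled at the right scale, so the proposed induction is circular as stated; closing it requires the further decomposition of the layer functions (or the characteristic-root/factorization analysis) carried out in \cite{RPS,GRP}, which you acknowledge but do not supply. Since the paper delegates exactly this step to the literature, your proposal is not weaker than the paper's own proof, but the $k\ge 1$ bounds are not established by your sketch either.
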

\begin{proof}
In region $\Omega^{-},$ we will find the bound for the left and right layer term. For the left layer, consider a barrier function $\xi_{\pm}(x)=Ce^{-\theta_{2}x}\pm w_{l}^{*-}(x),~~~x\in\Omega^{-}=(0,d).$
	For a large C, $\xi_{\pm}(0)\ge0$ and $\xi_{\pm}(d)\ge0$.
	Now 
	\begin{align*}
	\mathcal{L}\xi_{\pm}(x)&=Ce^{-\theta_{2}x}(\epsilon\theta_{2}^{2}-\mu a(x)\theta_{2}-b(x))\\&\le Ce^{-\theta_{2}x}\bigg(\frac{\rho\alpha}{4}+\lvert a(x)\rvert \frac{\rho}{2}-b(x)\bigg)\\&\le Ce^{-\theta_{2}x}(\rho\lvert a(x)\rvert -b(x))\\&\le0.
	\end{align*}
	therefore 
	$$\|w_{l}^{*-}\|\le Ce^{-\theta_{2}x},~x\in\Omega^{-}.$$
For the right layer term, consider a barrier function $\xi_{\pm}(x)=Ce^{-\theta_{1}(d-x)}\pm w_{r}^{*-}(x),~x\in\Omega^{-}=(0,d).$
	For any large C, $\xi_{\pm}(0)\ge0$ and $\xi_{\pm}(d)\ge0$.
	Now 
	\begin{align*}
	\mathcal{L}\xi_{\pm}(x)&=Ce^{-\theta_{1}(d-x)}(\epsilon\theta_{1}^{2}+\mu a(x)\theta_{1}-b(x))\\&\le
	Ce^{-\theta_{1}(d-x)}\bigg(\frac{\alpha^{2}\mu^{2}}{4\epsilon}+\mu a(x)\frac{\alpha\mu}{2\epsilon}-b(x)\bigg)\\&\le0.
	\end{align*}
	Therefore 
	$$\|w_{r}^{*-}\|\le Ce^{-\theta_{1}(d-x)},~x\in\Omega^{-}.$$
		In a similar way, we can prove the bounds for $w_{l}^{*+}(x)$ and $ w_{r}^{*+}(x)$ in the region $\Omega^{+}$. 
		The bounds for higher derivatives of $w_l^*$ and $w_r^*$ can be proved using the techniques given in \cite{FHMRS1, RPS}.

\end{proof}

The unique solution $y(x)$ of the problem (\ref{twoparameter}) is now given by
$$y(x)=\left\{
\begin{array}{ll}
\displaystyle v^{*-}(x)+w_{l}^{*-}(x)+w_{r}^{*-}(x), & \hbox{ $x\in (0,d),$ }\\
(v^{*-}+w_{l}^{*-}+w_{r}^{*-})(d-)=(v^{*+}+w_{l}^{*+}+w_{r}^{*+})(d+), & \hbox{ $x=d$,}\\
v^{*+}(x)+w_{l}^{*+}(x)+w_{r}^{*+}(x), & \hbox{ $x\in (d,1).$ }
\end{array}
\right.$$
\section{Discrete problem}\label{sec4}
The differential equation \eqref{twoparameter} is discretized using the upwind finite difference method on a suitably constructed Shishkin-Bakhvalov mesh. The domain $\bar{\Omega}=[0,1]$ is subdivided into six subintervals as follows
\[ \bar{\Omega}=[0,\sigma_{1}]\cup[\sigma_{1},d-\sigma_{2}]\cup[d-\sigma_{2},d]\cup[d,d+\sigma_{3}]\cup[d+\sigma_{3},1-\sigma_{4}]\cup[1-\sigma_{4},1].\]

Let $\bar{\Omega}_{N}=\{x_{i}\}_{0}^{N}$ denotes the mesh points with a point of discontinuity at the point $\displaystyle x_{\frac{N}{2}}=d.$ The interior points of the mesh are denoted by $\displaystyle \Omega_{N}=\{x_{i}:1\le i \le \frac{N}{2}-1\}\cup\{x_{i}:\frac{N}{2}+1\le i \le N-1\}.$ Let 
$\displaystyle \Omega_{N}^-= \{x_i, 1\leq i \leq \frac{N}{2}-1\}$ and $\displaystyle \Omega_{N}^+= \{x_i,  \frac{N}{2}+1\leq i \leq N-1\}.$ 
The transition points in $\bar{\Omega}$ are:
\begin{eqnarray*}
\sigma_{1}&=&\frac{4}{\theta_{2}}\ln N,\quad \sigma_{2}=\frac{4}{\theta_{1}}\ln N,\\
\sigma_{3}&=&\frac{4}{\theta_{1}}\ln N,\quad \sigma_{4}=\frac{4}{\theta_{2}}\ln N.
\end{eqnarray*}

On the sub-intervals $[0,\sigma_{1}],[d-\sigma_{2},d],[d,d+\sigma_{3}]$ and  $[1-\sigma_{4},1]$ a graded mesh of $\frac{N}{8}+1$ mesh points is constructed  by inverting the layer function $e^{-\theta_{2}x}, e^{-\theta_{1}(x-d)}, e^{-\theta_{1}(d-x)}$ and $e^{-\theta_{2}(1-x)}$ in the above sub-intervals respectively. On $[\sigma_{1},d-\sigma_{2}]$ and $[d+\sigma_{3},1-\sigma_{4}]$ a uniform mesh of $\frac{N}{4}+1$ mesh points is taken.
We assume that for the case $\sqrt{\alpha}\mu \le \sqrt{\rho\epsilon}$, $\sqrt{\epsilon}<N^{-1}$  and for $\sqrt{\alpha}\mu > \sqrt{\rho\epsilon}, \; \max{\{\epsilon/\mu, \mu\}} < N^{-1}$, otherwise the boundary layers could be resolved by standard uniform mesh.

The mesh points are given by
\begin{equation*}
x_{i}=\left\{
\begin{array}{ll}
\displaystyle -\frac{8}{\theta_{2}}\log\bigg(1+\frac{8i}{N}\bigg(\frac{1}{\sqrt{N}}-1\bigg)\bigg), & 0\le i \le \frac{N}{8}, \vspace{.2cm} \\
\displaystyle\sigma_{1}+\frac{(d-\sigma_{1}-\sigma_{2})\bigg(\frac{i}{N}-\frac{1}{8}\bigg)}{\frac{1}{4}}, & \frac{N}{8}\le i \le \frac{3N}{8}, \vspace{.2cm}\\
\displaystyle d+\frac{8}{\theta_{1}}\log\bigg(\frac{8i}{N}\bigg(1-\frac{1}{\sqrt{N}}\bigg)+\frac{4}{\sqrt{N}}-3\bigg), & \frac{3N}{8}\le i \le \frac{N}{2}, \vspace{.2cm}\\
\displaystyle d-\frac{8}{\theta_{1}}\log\bigg(\frac{8i}{N}\bigg(\frac{1}{\sqrt{N}}-1\bigg)+5-\frac{4}{\sqrt{N}}\bigg), & \frac{N}{2}\le i \le \frac{5N}{8},\vspace{.2cm} \\
\displaystyle d+\sigma_{3}+\frac{(1-d-\sigma_{3}-\sigma_{4})\bigg(\frac{i}{N}-\frac{5}{8}\bigg)}{\frac{1}{4}}, & \frac{5N}{8}\le i \le \frac{7N}{8}, \vspace{.2cm} \\
\displaystyle1+\frac{8}{\theta_{2}}\log\bigg(\frac{8i}{N}\bigg(1-\frac{1}{\sqrt{N}}\bigg)+\frac{8}{\sqrt{N}}-7\bigg), &  \frac{7N}{8}\le i \le N.  
\end{array}
\right.
\end{equation*}

The mesh generating function $\phi$, maps a uniform mesh $\xi$ onto a layer adapted mesh in $x$ by $x=\phi(\xi)$. The mesh in terms of the mesh generating function can be written as:
\[	x_i=\phi(\xi_i)=\left\{
\begin{array}{ll}
\displaystyle \frac{8}{\theta_{2}}\phi_{1}(\xi_i), &  0\le i \le \frac{N}{8},\\
\displaystyle \sigma_{1}+\frac{(d-\sigma_{1}-\sigma_{2})(\xi_i-\frac{1}{8})}{\frac{1}{4}}, & \frac{N}{8}\le i \le \frac{3N}{8},\\
\displaystyle d-\frac{8}{\theta_{1}}\phi_{2}(\xi_i), & \frac{3N}{8}\le i \le \frac{N}{2}, \\
\displaystyle d+\frac{8}{\theta_{1}}\phi_{3}(\xi_i), & \frac{N}{2}\le i \le \frac{5N}{8}, \vspace{.3cm}\\
\displaystyle d+\sigma_{3}+\frac{(1-d-\sigma_{3}-\sigma_{4})(\xi_i-\frac{5}{8})}{\frac{1}{4}}, & \frac{5N}{8}\le i \le \frac{7N}{8},\\
\displaystyle1-\frac{8}{\theta_{2}}\phi_{4}(\xi_i), & \frac{7N}{8}\le i \le N, 
\end{array}
\right.\]
with $\displaystyle \xi_{i}=\frac{i}{N}$. The functions $\phi_{1}, \phi_{3}$ are monotonically increasing on $[0, \frac{1}{8}]$ and $[\frac{1}{2}, \frac{5}{8}]$ respectively. And $\phi_{2}, \phi_{4}$ are monotonically decreasing on $[\frac{3}{8}, \frac{1}{2}]$ and $[\frac{7}{8}, 1]$ respectively. These mesh generating functions $\phi_{i}$'s are defined with the help of corresponding mesh characterizing functions $\psi_{i}$'s as
$$\psi_{i}(\xi)=\exp(-\phi_{i}(\xi)),~~ i=1,2,3,4.$$
\begin{lemma}
	\label{assump}
	We assume that the mesh-generating functions $\phi_{1}, \phi_{2}, \phi_{3}$ and $\phi_{4}$ satisfy the following conditions
$$\max\limits_{\xi\in [0,\frac{1}{8}]}\lvert \phi_{1}^{'}(\xi)\rvert \le CN,~~~~\max\limits_{\xi\in [\frac{3}{8},\frac{1}{2}]}\lvert \phi_{2}^{'}(\xi)\rvert \le CN,$$
$$\max\limits_{\xi\in [\frac{1}{2},\frac{5}{8}]}\lvert \phi_{3}^{'}(\xi)\rvert \le CN,~~~~\max\limits_{\xi\in [\frac{7}{8},1]}\lvert \phi_{4}^{'}(\xi)\rvert \le CN$$  and
$$\int_{0}^{\frac{1}{8}}\{\phi_{1}^{'}(\xi)\}^{2}d\xi\le CN,~~~~~\int_{\frac{3}{8}}^{\frac{1}{2}}\{\phi_{2}^{'}(\xi)\}^{2}d\xi\le CN,$$
$$\int_{\frac{1}{2}}^{\frac{5}{8}}\{\phi_{3}^{'}(\xi)\}^{2} d\xi\le CN,~~~~~ \int_{\frac{7}{8}}^{1}\{\phi_{4}^{'}(\xi)\}^{2}d\xi\le CN.$$
\end{lemma}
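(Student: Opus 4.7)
The plan is to verify the four bounds on each $\phi_i$ and the four bounds on $\int (\phi_i')^2\, d\xi$ by direct calculation, exploiting the fact that the four mesh-generating functions are essentially mirror images of one another, so only one case needs to be computed in full.

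First, I would read off the explicit form of $\phi_1$ from the mesh definition in the first sub-interval: comparing $x_i = \frac{8}{\theta_2}\phi_1(\xi_i)$ with $x_i = -\frac{8}{\theta_2}\log\bigl(1+8\xi_i(N^{-1/2}-1)\bigr)$ gives $\phi_1(\xi) = -\log\bigl(1+8\xi(N^{-1/2}-1)\bigr)$ on $[0,1/8]$. Analogous bookkeeping, using the monotonicity statements noted after the definition of $\phi$, gives $\phi_2, \phi_3, \phi_4$ as the same logarithmic function composed with an affine reflection of the parameter $\xi$, so their derivatives have the same magnitude as $|\phi_1'|$ on their respective intervals. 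Hence the four claimed bounds collapse to a single computation.

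Next, differentiate to obtain
\[ \phi_1'(\xi) = \frac{-8(N^{-1/2}-1)}{1+8\xi(N^{-1/2}-1)} = \frac{8(1-N^{-1/2})}{1+8\xi(N^{-1/2}-1)}, \]
and observe that the denominator decreases monotonically on $[0,1/8]$ from $1$ down to $N^{-1/2}$. Therefore
\[ \max_{\xi\in[0,1/8]}|\phi_1'(\xi)| = 8(\sqrt{N}-1) \le CN, \]
which gives the first group of four inequalities (with plenty of room to spare, in fact $C\sqrt{N}$).

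Finally, for the integral bounds, substitute $u = 1+8\xi(N^{-1/2}-1)$, $du = 8(N^{-1/2}-1)\,d\xi$, to convert the integrand into a multiple of $u^{-2}$, obtaining
\[ \int_0^{1/8} \bigl(\phi_1'(\xi)\bigr)^2 d\xi \;=\; 8(1-N^{-1/2})\int_{N^{-1/2}}^{1} \frac{du}{u^2} \;=\; \frac{8(\sqrt{N}-1)^2}{\sqrt{N}} \;\le\; C\sqrt{N} \;\le\; CN. \]
The same change of variables, adapted to each interval, yields the corresponding bound for $\phi_2,\phi_3,\phi_4$.

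There is no genuine obstacle here; the only thing worth flagging is that the claimed $CN$ bounds are comfortably loose (the sharp order is $\sqrt{N}$ in each case), which is exactly what makes this Shishkin-Bakhvalov mesh well-suited for the first-order parameter-uniform convergence proof that follows in Section 5 — the $\sqrt{N}$ slack absorbs the factors that would otherwise produce the logarithmic loss seen on a Shishkin mesh.
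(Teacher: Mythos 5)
Your proof is correct and follows essentially the same route as the paper: read off $\phi_1(\xi)=-\log\bigl(1+8\xi(N^{-1/2}-1)\bigr)$ from the mesh definition, bound $\lvert\phi_1'\rvert$ by $8(\sqrt{N}-1)\le 8\sqrt{N}\le CN$ on $[0,\tfrac18]$, and handle $\phi_2,\phi_3,\phi_4$ as affine reflections of the same function. You are in fact slightly more complete than the paper, which only verifies the pointwise derivative bound (together with $\lvert\psi_1'\rvert\le 8$, used later) and leaves the integral conditions implicit as a consequence, whereas you evaluate $\int_0^{1/8}(\phi_1')^2\,d\xi = 8(\sqrt{N}-1)^2/\sqrt{N}$ exactly by substitution.
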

\begin{proof}
  The mesh-generating functions $\phi_{1}(\xi)=-\log\bigg[1-8\xi\bigg(\frac{1}{\sqrt{N}}-1\bigg)\bigg],~~\xi\in [0,\frac{1}{8}].$\\
  Therefore, 
  $$\lvert \phi_{1}'(\xi)\rvert\le\frac{8\sqrt{N}}{\sqrt{N}+(1-\sqrt{N})}\le8\sqrt{N}\le CN.$$
  Also mesh characterizing function
 \begin{align*}
     \psi_{1}(\xi)&=\exp(-\phi_{1}(\xi)),~~\xi \in \bigg[0,\frac{1}{8}\bigg]\\&=1+\bigg( \frac{1}{\sqrt{N}}-1\bigg)8\xi\\\psi_{1}'(\xi)&=\bigg( \frac{1}{\sqrt{N}}-1\bigg)8\\\implies\lvert \psi_{1}'(\xi)\rvert &\le 8,~~\xi \in \bigg[0,\frac{1}{8}\bigg].
 \end{align*}
  Similarly, we can prove the bounds for remaining functions in the intervals $[\frac{3}{8},\frac{1}{2}], [\frac{1}{2},\frac{7}{8}]$ and $[\frac{7}{8},1].$
\end{proof}
Using this Lemma (\ref{assump}) we see that for $0\le i \le \frac{N}{8}$, 
\[h_{i}=x_{i}-x_{i-1}=\frac{8}{\theta_{2}}(\phi_{1}(\xi_{i})-\phi_{1}(\xi_{i-1}))\le\frac{8}{\theta_{2}}(\xi_{i}-\xi_{i-1})\max\limits_{\xi\in [0,\frac{1}{8}]}\lvert \phi_{1}^{'}(\xi)\rvert \le \frac{C}{\theta_{2}} .\]

Similarly, we  show that 
\begin{equation*}
h_i \leq  \left\lbrace
\begin{array}{ll} \displaystyle 
\frac{8}{\theta_{1}}(\xi_{i}-\xi_{i-1})\max\limits_{\xi\in [\frac{3}{8},\frac{1}{8}]}\lvert  \phi_{2}^{'}(\xi)\rvert  \le \frac{C}{\theta_{1}}, & \quad \frac{3N}{8}\le i \le \frac{N}{2}  \vspace{.3cm}\\
\displaystyle \frac{8}{\theta_{1}}(\xi_{i}-\xi_{i-1})\max\limits_{\xi\in [\frac{1}{2},\frac{5}{8}]}\lvert \phi_{3}^{'}(\xi)\rvert \le \frac{C}{\theta_{1}}, & \quad  \frac{N}{2}\le i \le \frac{5N}{8} \vspace{.3cm}\\
\displaystyle \frac{8}{\theta_{2}}(\xi_{i}-\xi_{i-1})\max\limits_{\xi\in [\frac{7}{8},1]}\lvert \phi_{4}^{'}(\xi)\rvert \le \frac{C}{\theta_{2}}, &
\quad  \frac{7N}{8}\le i \le N.
\end{array} 
\right.
\end{equation*}

On the Shishkin-Bakhvalov mesh defined above, we use  upwind finite difference method to discretize the differential equation (\ref{twoparameter}).
We define the difference scheme as: Find $Y(x_{i}),~\forall~x_{i}\in \bar{\Omega}_N$ such that:
\begin{equation}
\begin{aligned}
\label{DE}
&\mathcal{L}^{N}Y(x_{i})\equiv \epsilon \delta^{2}Y(x_{i})+\mu a(x_{i})D^{*}Y(x_{i})-b(x_{i})Y(x_{i})=f(x_{i}), ~x_i \in \Omega_N\\
&Y(0)=y(0),~~~Y(1)=y(1),  \\
&D^{-}Y\left(x_{\frac{N}{2}}\right)=D^{+}Y\left(x_{\frac{N}{2}}\right), 
\end{aligned}
\end{equation} 
where 
\[D^{+}Y(x_{i})=\frac{Y(x_{i+1})-Y(x_{i})}{x_{i+1}-x_{i}},~~~~~~ D^{-}Y(x_{i})=\frac{Y(x_{i})-Y(x_{i-1})}{x_{i}-x_{i-1}},\]
\[D^{*}Y(x_{i})=\left\{
\begin{array}{ll}
\displaystyle D^{-}Y(x_{i}), & i<\frac{N}{2}, \vspace{0.2cm}\\
D^{+}Y(x_{i}), &  i>\frac{N}{2},
\end{array}
\right. \quad \delta^{2}Y(x_{i})=\frac{2(D^{+}Y(x_{i})-D^{-}Y(x_{i}))}{x_{i+1}-x_{i-1}}.\]

The following lemma demonstrates that the finite difference operator $\mathcal{L}^N$ has characteristics that are similar to those of the differential operator $\mathcal{L}.$
%%%%%%%%%%%%%%  Lemma 4.1 %%%%%%%%%%%%%%%%%%%%
\\
\begin{lemma}{\bf Discrete minimum principle:}
	\label{minimun}
	Suppose that a mesh function $Y(x_{i})$ satisfies $Y(0)\ge0,~~ Y(1)\ge 0,~~\mathcal{L}^{N} Y(x_{i})\le0,~~ \forall~ x_{i}\in\Omega_{N}$, and  $D^{+}Y(x_{\frac{N}{2}})-D^{-}Y(x_{\frac{N}{2}})\le0$ then $Y(x_{i})\ge0, ~~\forall~x_{i}\in \bar{\Omega}_{N}.$ 
\end{lemma}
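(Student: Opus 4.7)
The plan is a standard contradiction argument adapted to account for the jump point at $x_{N/2}$. Suppose the conclusion fails. Let $k^{*}\in\{0,1,\dots,N\}$ be an index at which $Y$ attains its minimum, so that $Y(x_{k^{*}})=\min_{0\le i\le N}Y(x_{i})<0$. Because $Y(0)\ge 0$ and $Y(1)\ge 0$, we must have $k^{*}\in\{1,\dots,N-1\}$, i.e.\ $x_{k^{*}}\in\Omega_{N}$. My intention is to split into the three cases $k^{*}<N/2$, $k^{*}>N/2$, and $k^{*}=N/2$, and in each one derive a strict inequality $\mathcal{L}^{N}Y(x_{k^{*}})>0$ that contradicts the hypothesis.

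First I would collect the three sign facts that hold at any interior minimum: $D^{+}Y(x_{k^{*}})\ge 0$, $D^{-}Y(x_{k^{*}})\le 0$, and hence $\delta^{2}Y(x_{k^{*}})\ge 0$, together with $-b(x_{k^{*}})Y(x_{k^{*}})>0$ since $b\ge\gamma>0$ and $Y(x_{k^{*}})<0$. In the case $k^{*}<N/2$, the upwind operator is $D^{*}=D^{-}$ and $a(x_{k^{*}})\le-\alpha_{1}<0$, so $\mu a(x_{k^{*}})D^{-}Y(x_{k^{*}})\ge 0$; combining the three nonnegative contributions with the strictly positive reaction term yields $\mathcal{L}^{N}Y(x_{k^{*}})>0$, a contradiction. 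The case $k^{*}>N/2$ is entirely symmetric: $D^{*}=D^{+}$, $a(x_{k^{*}})\ge\alpha_{2}>0$, and the convection term is again nonnegative.

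The only delicate case is $k^{*}=N/2$. Here $Y$ is not required to satisfy $\mathcal{L}^{N}Y\le 0$, so I cannot contradict the scheme directly. Instead I would exploit the interface condition $D^{+}Y(x_{N/2})-D^{-}Y(x_{N/2})\le 0$. Since $x_{N/2}$ is a minimum, $D^{+}Y(x_{N/2})\ge 0\ge D^{-}Y(x_{N/2})$, and combining with the jump hypothesis forces
\[ D^{+}Y(x_{N/2})=D^{-}Y(x_{N/2})=0, \]
which means $Y(x_{N/2-1})=Y(x_{N/2})=Y(x_{N/2+1})<0$. Thus the minimum is also attained at $x_{N/2-1}\in\Omega_{N}^{-}$ and at $x_{N/2+1}\in\Omega_{N}^{+}$, and I can now reapply the argument of the previous paragraph to either of these indices (both are genuine interior points away from the interface), reaching the same contradiction $\mathcal{L}^{N}Y(x_{N/2\pm 1})>0$.

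I anticipate the main obstacle is precisely the interface case: one must verify that the jump condition together with the minimality at $x_{N/2}$ propagates the minimum to the adjacent mesh points (rather than just giving a weak inequality), so that the earlier cases can be re-invoked there. The rest of the proof is a routine sign check on the three constituent terms of $\mathcal{L}^{N}$ at an interior minimum.
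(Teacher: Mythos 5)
Your proof is correct: the global-minimum contradiction argument works in both half-domains because the upwind choice of $D^{*}$ matches the sign of $a$ on each side, and your treatment of the interface point correctly shows that minimality plus the jump condition forces $D^{+}Y(x_{N/2})=D^{-}Y(x_{N/2})=0$, propagating the minimum to $x_{N/2\pm1}$ where the interior argument applies. The paper itself gives no proof here (it simply cites \cite{TP}), and your argument is the standard one used for such discrete operators, so there is nothing substantively different to compare.
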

\begin{proof} We refer to \cite{TP} for proof.
\end{proof}
%%%%%%%%%%%%%%%Lemma 4.2%%%%%%%%%%%%%%%%%
\begin{lemma}
	If $Y(x_{i}), x_{i}\in\bar{\Omega}_{N}$ is a mesh function  satisfying the difference scheme \eqref{DE}, then $ \|Y\|_{\bar{\Omega}_{N}}\le C$.
\end{lemma}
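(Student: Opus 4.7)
The plan is to mirror the continuous stability proof (Theorem \ref{stability}) at the discrete level, using the discrete minimum principle (Lemma \ref{minimun}) with a constant barrier function.

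First I would set
\[ M=\max\{|y(0)|,|y(1)|\}+\frac{1}{\gamma}\,\|f\|_{\Omega^-\cup\Omega^+}, \]
and introduce the comparison mesh functions $\Psi^{\pm}(x_i)=M\pm Y(x_i)$ on $\bar\Omega_N$. Since $Y(0)=y(0)$ and $Y(1)=y(1)$, the boundary values $\Psi^{\pm}(0)$ and $\Psi^{\pm}(1)$ are manifestly non-negative by the definition of $M$.

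Next I would check the discrete differential inequality on $\Omega_N$. Because $M$ is a constant, $\delta^2 M=0$ and $D^{*}M=0$, so
\[ \mathcal{L}^N\Psi^{\pm}(x_i)=-b(x_i)M\pm\mathcal{L}^N Y(x_i)=-b(x_i)M\pm f(x_i)\le -\gamma M\pm f(x_i)\le -\|f\|\pm f(x_i)\le 0, \]
using $b(x_i)\ge\gamma>0$ and the choice of $M$. At the discontinuity point $x_{N/2}=d$, the constant $M$ contributes nothing to either one-sided difference, so
\[ D^{+}\Psi^{\pm}(x_{N/2})-D^{-}\Psi^{\pm}(x_{N/2})=\pm\bigl(D^{+}Y(x_{N/2})-D^{-}Y(x_{N/2})\bigr)=0, \]
by the transmission condition in \eqref{DE}. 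All hypotheses of Lemma \ref{minimun} are therefore satisfied, giving $\Psi^{\pm}(x_i)\ge 0$ on $\bar\Omega_N$, i.e.\ $|Y(x_i)|\le M$.

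Finally, since $|y(0)|,|y(1)|\le C$ by assumption and $f$ is piecewise smooth on a compact interval, $M\le C$, whence $\|Y\|_{\bar\Omega_N}\le C$. There is no real obstacle here; the only point demanding care is verifying that the jump condition in \eqref{DE} is preserved by the additive constant $M$ so that Lemma \ref{minimun} applies unchanged, and this is immediate because forward and backward differences annihilate constants.
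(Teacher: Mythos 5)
Your proof is correct and follows essentially the same route as the paper: the same barrier functions $M\pm Y$ with $M=\max\{|y(0)|,|y(1)|\}+\frac{1}{\gamma}\|f\|_{\Omega^-\cup\Omega^+}$, verification of the sign of $\mathcal{L}^N$ applied to them, the observation that the one-sided differences at $x_{N/2}$ annihilate the constant, and an appeal to the discrete minimum principle (Lemma \ref{minimun}). No issues.
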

\begin{proof}
	Define the mesh function for $x_{i}\in\bar{\Omega}_{N}$, as
	\[\omega^{\pm}(x_{i})=M\pm Y(x_{i}),\]
	where $M=\max\{\lvert Y(0)\rvert ,\lvert Y(1)\rvert \}+\frac{1}{\gamma} \|f\|_{\Omega^-\cup \Omega^+}.$
	Now, $\psi^{\pm}(0)$ and $\psi^{\pm}(1)$ are non negative.
	For  $x_{i}\in \Omega_{N},$
	$$\mathcal{L}^N\omega^{\pm}(x_{i})=
	-b(x_{i})M \pm \mathcal{L}^N Y(x_{i})= -b(x_{i})M \pm f(x_i) \le0.$$
	Also $$D^{+}\omega^{\pm}\big(x_{\frac{N}{2}}\big)-D^{-}\omega^{\pm}\big(x_{\frac{N}{2}}\big)=0.$$\\
	It follows from the discrete minimum principle that $\omega^{\pm}(x_{i})\ge0,~~\forall x_i\in\bar{\Omega}_{N}$, which implies
	$$\|Y\|_{\bar{\Omega}_{N}}\le C$$
\end{proof}
\section{Error estimates}\label{sec5}
Let us denote the nodal error at each mesh point $x_{i}\in\bar{\Omega}_{N}$ by 
\[\lvert e(x_{i})\rvert =\lvert Y(x_{i})-y(x_{i})\rvert ,\]
where $Y$ and $y$ are solutions of equation \eqref{twoparameter} and \eqref{DE} at a point $x_i$ respectively.
 
We find the bounds for the nodal error $\lvert e(x_{i})\rvert $ in $\Omega_{N}^-$ and $\Omega_{N}^+$ separately.
To find the error bounds, we decompose the solution $Y$ of the discrete problem (\ref{DE}) into regular, and layer parts  as \begin{equation}\label{decomposition}
Y(x_{i})=V^{\ast}(x_{i})+W^\ast(x_{i}).\end{equation}
We further split the regular and layer section into parts to the left and right of the discontinuity, i.e., in $\Omega_{N}^-$ and $\Omega_{N}^+$.

Let $V^{*-}(x_{i})$ and $V^{*+}(x_{i})$ be mesh functions, which approximate $V^{\ast}(x_{i})$  to the left and right sides of the point of discontinuity $x_{\frac{N}{2}}=d$  respectively, be defined as follows:
\begin{equation}
\label{regular discrete}V^{*}(x)=\left\{
\begin{array}{ll}
\displaystyle V^{*-}(x_{i}), & \hbox{for} ~1\le i\le \frac{N}{2}-1,\\
\displaystyle V^{*+}(x_{i}), & \hbox{for}~ \frac{N}{2}+1\le i\le N-1,
\end{array}
\right.\end{equation}
where $V^{*-}(x)$ and $V^{*+}(x)$ are, respectively, the solutions to the following discrete problems:
\begin{eqnarray*}
\mathcal{L}^{N}V^{*-}(x_{i})&=&f(x_{i}), \; 1\le i\le \frac{N}{2}-1,\quad V^{*-}(0)=v^{*}(0),\quad V^{*-}(x_{\frac{N}{2}})=v^{*}(d-),\\
\mathcal{L}^{N}V^{*+}(x_{i})&=&f(x_{i}),\quad \frac{N}{2}+1\le i\le N-1,\; V^{*+}(x_{\frac{N}{2}})=v^{*}(d+),\quad V^{*+}(1)=v^{*}(1).
\end{eqnarray*}
 Similarly, we split the mesh function $W^\ast(x_i)$ into left and right layer components $W_{l}^{*}(x_{i})$ and $ W_{r}^{*}(x_{i})$. We further decompose them into components on either side of the discontinuity, $x_{\frac{N}{2}}=d$. 

The decomposition is as follows:
\[W^{*}(x_i)=W_{l}^{*}(x_{i})+W_{r}^{*}(x_{i})=\left\{
\begin{array}{ll}
\displaystyle W_{l}^{*-}(x_i)+W_{r}^{*-}(x_i), & \hbox{for $1\le i\le \frac{N}{2}-1,$ }\\
W_{l}^{*+}(x_{i})+W_{r}^{*+}(x_{i}), & \hbox{for $\frac{N}{2}+1\le i\le N-1,$ }
\end{array}
\right.\]
where $W_{l}^{*-}(x_{i}), W_{l}^{*+}(x_{i})$, $W_{r}^{*-}(x_{i})$ and $ W_{r}^{*+}(x_{i})$ are solutions of the following equations:

\begin{equation}
\label{Wleft}
\begin{aligned}
	\left\{
	\begin{array}{ll} 
\mathcal{L}^{N}W_{l}^{*-}(x_{i})=0,&\; 1\le i\le \frac{N}{2}-1,  W_{l}^{*-}(0)=w_{l}^{*-}(0),   W_{l}^{*-}(x_{\frac{N}{2}})= w_{l}^{*-}(d-),\\
\mathcal{L}^{N}W_{l}^{*+}(x_{i})=0,&\; \frac{N}{2}+1\le i\le N-1,   W_{l}^{*+}(x_{\frac{N}{2}})=w_{l}^{*+}(d+),  W_{l}^{*+}(1)=0,
\end{array}
\right.
\end{aligned}
\end{equation}
\begin{equation}
\label{Wright}
	\begin{aligned}
			\left\{
		\begin{array}{ll} 
\mathcal{L}^{N}W_{r}^{*-}(x_{i})=0,&\; 1\le i\le \frac{N}{2}-1,  W_{r}^{*-}(0)=0,  W_{r}^{*-}(x_{\frac{N}{2}})=w_{r}^{*-}(d-),\\
\mathcal{L}^{N}W_{r}^{*+}(x_{i})=0,&\; \frac{N}{2}+1\le i\le N-1,  W_{r}^{*+}(x_{\frac{N}{2}})=0, W_{r}^{*+}(1)=w_{r}^{*+}(1).
\end{array}
\right.
\end{aligned}
\end{equation}

The unique solution $Y(x_i)$ of the problem (\ref{DE}) is defined by
$$Y(x_i)=\left\{
\begin{array}{ll}
\displaystyle (V^{*-}+W_{l}^{*-}+W_{r}^{*-})(x_{i}), & \hbox{ $1\le i\le \frac{N}{2}-1,$ }\\
\displaystyle  (V^{*-}+W_{l}^{*-}+W_{r}^{*-})(x_{i})=(V^{*+}+W_{l}^{*+}+W_{r}^{*+})(x_{i}), & \hbox{ $i=\frac{N}{2}$,}\\
\displaystyle (V^{*+}+W_{l}^{*+}+W_{r}^{*+})(x_{i}), & \hbox{ $\frac{N}{2}+1\le i\le N-1.$ }
\end{array}
\right.$$
The next lemma gives bounds on the discrete layer components.
\begin{lemma} \label{discrete singular}
	The layer components $W_{l}^{*-}(x_{i}), W_{l}^{*+}(x_{i})$, $W_{r}^{*-}(x_{i})$ and $ W_{r}^{*+}(x_{i})$ satisfy the following bounds:
\begin{align*}
    & \lvert W_{l}^{*-}(x_{i})\rvert \le C\gamma_{l,i}^{-}, \quad  \gamma_{l,i}^{-}=\prod_{k=1}^{i}(1+\theta_{2}h_{k})^{-1}, \; 1\leq i \leq \frac{N}{2}, & \gamma_{l,0}^{-}=C_{1}, \\
    & \lvert W_{l}^{*+}(x_{i})\rvert \le C \gamma_{l,i}^{+}, \quad  \gamma_{l,i}^{+}=\prod_{k=\frac{N}{2}+1}^{i}(1+\theta_{1}h_{k})^{-1}, \; \frac{N}{2}+1 \leq i \leq N, & \gamma_{l,x_{\frac{N}{2}}}^{+}=C_{1},\\
    & \lvert W_{r}^{*-}(x_{i})\rvert \le C \gamma_{r,i}^{-}, \quad \gamma_{r,i}^{-}=\prod_{k=i+1}^{N/2}(1+\theta_{1}h_{k})^{-1},\; 1\leq i \leq \frac{N}{2}, & \gamma_{l,x_{\frac{N}{2}}}^{-}=C_{1},\\
    &\lvert W_{r}^{*-}(x_{i})\rvert \le C \gamma_{r,i}^{+}, \quad  
     \gamma_{r,i}^{+}=C\prod_{k=i+1}^{N}(1+\theta_{2}h_{k})^{-1}, \; \frac{N}{2}+1 \leq i \leq N, & \gamma_{l,N}^{+}=C_{1}.
\end{align*}
\end{lemma}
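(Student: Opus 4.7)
The plan is to prove all four bounds via the same template: construct a discrete barrier from the stated product $\gamma$, verify that the barrier dominates the layer component at the two endpoints of the relevant sub-grid, verify that the discrete operator $\mathcal{L}^N$ is non-positive on the barrier in the interior, and invoke the discrete minimum principle (Lemma \ref{minimun}). I will spell out the argument only for $W_l^{*-}$ on $\Omega_N^-$; the remaining three bounds follow by the same scheme after reflecting in $d$ and flipping the sign of $a$.

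Define $\Psi^{\pm}(x_i)=C\gamma_{l,i}^-\pm W_l^{*-}(x_i)$ for $0\le i\le N/2$. At $i=0$, $\gamma_{l,0}^-=1$ and $|W_l^{*-}(0)|=|w_l^{*-}(0)|\le C$, so $\Psi^{\pm}(0)\ge 0$ once $C$ is fixed large enough. At $i=N/2$ the continuous estimate of Theorem \ref{boundswl} (or Theorem \ref{wr} in the second regime) gives $|W_l^{*-}(x_{N/2})|=|w_l^{*-}(d-)|\le C e^{-\theta_2 d}$, and the elementary inequality $(1+\theta_2 h_k)^{-1}\ge e^{-\theta_2 h_k}$ telescopes to $\gamma_{l,N/2}^-\ge e^{-\theta_2 d}$, so $\Psi^{\pm}(x_{N/2})\ge 0$ for the same $C$.

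The heart of the argument is to check $\mathcal{L}^N\gamma_{l,i}^-\le 0$ for $1\le i\le N/2-1$. The multiplicative structure of $\gamma$ reduces the discrete derivatives to multiples of $\gamma$ itself: since $\gamma_{l,i-1}^-=(1+\theta_2 h_i)\gamma_{l,i}^-$ one obtains
\[
D^-\gamma_{l,i}^-=-\theta_2\gamma_{l,i}^-, \qquad D^+\gamma_{l,i}^-=-\frac{\theta_2}{1+\theta_2 h_{i+1}}\gamma_{l,i}^-,
\]
and therefore $\epsilon\delta^2\gamma_{l,i}^-\le 2\epsilon\theta_2^2\gamma_{l,i}^-$. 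In $\Omega_N^-$ the upwind operator is $D^\ast=D^-$ and $a(x_i)\le-\alpha_1<0$, so
\[
\mathcal{L}^N\gamma_{l,i}^-\le\bigl[\,2\epsilon\theta_2^2+\mu|a(x_i)|\theta_2-b(x_i)\,\bigr]\gamma_{l,i}^-.
\]
Substituting $\theta_2=\sqrt{\rho\alpha}/(2\sqrt\epsilon)$ (in the regime $\sqrt{\alpha}\mu\le\sqrt{\rho\epsilon}$) or $\theta_2=\rho/(2\mu)$ (otherwise), and using the defining property $b(x)\ge\rho|a(x)|$ together with $\alpha\le|a(x)|$, makes the bracket non-positive, exactly mirroring the continuous barrier calculations carried out in Theorems \ref{boundswl} and \ref{wr}. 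The main obstacle will be the bookkeeping: correctly identifying which of $D^\pm$ is the upwind operator on each side of the discontinuity, and re-deriving the barrier inequality for the $+$ components, where $a>0$ and the relevant boundary data sits at the opposite endpoint, so that the exponent $\theta_1$ with its different scaling must replace $\theta_2$ throughout.
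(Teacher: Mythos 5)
Your proposal is correct and follows essentially the same route as the paper: a discrete barrier $C\gamma\pm W$, the multiplicative identities $D^{-}\gamma_{l,i}^{-}=-\theta_{2}\gamma_{l,i}^{-}$ and $D^{+}\gamma_{l,i}^{-}=-\theta_{2}\gamma_{l,i}^{-}/(1+\theta_{2}h_{i+1})$, the resulting estimate $\mathcal{L}^{N}\gamma_{l,i}^{-}\le\bigl(2\epsilon\theta_{2}^{2}+\mu\lvert a(x_{i})\rvert\theta_{2}-b(x_{i})\bigr)\gamma_{l,i}^{-}\le 0$ in both parameter regimes, and a discrete comparison/minimum principle applied on each sub-grid (the paper invokes the version in \cite{RPS} rather than the transmission-condition Lemma \ref{minimun}), with your endpoint check at $x_{\frac{N}{2}}$ via $(1+\theta_{2}h_{k})^{-1}\ge e^{-\theta_{2}h_{k}}$ being a slightly more careful version of the paper's ``choose $C$, $C_{1}$ large'' step. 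The only point to keep in mind when carrying out the ``symmetric'' cases is that for the $\theta_{1}$-type barriers ($W_{l}^{*+}$ and $W_{r}^{*-}$) in the regime $\sqrt{\alpha}\mu>\sqrt{\rho\epsilon}$ the non-positivity comes from the cancellation $2\epsilon\theta_{1}^{2}-\mu\alpha\theta_{1}\le 0$ supplied by the convection term, not from $b\ge\rho\lvert a\rvert$, exactly as in the paper's computation.
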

%%%%%%%%%%%%%%%5proof
\begin{proof}
Define the barrier function for the left layer term as
\[\eta^{-}_{l,i}= \gamma_{l,i}^{-} \pm W_{l}^{*-}(x_{i}),\quad  0\leq i \leq \frac{N}{2}.\]	
For large enough $C$ and $C_1$, $\eta^{-}_{l,0}\geq 0$ and $\eta^{-}_{l,N/2}\geq 0$.

Consider,
 \begin{eqnarray*}
 \mathcal{L}^N \eta^{-}_{l,i}&=& \mathcal{L}^N  \gamma_{l,i}^{-} \pm \mathcal{L}^N W_{l}^{*-}(x_{i})\\
 &=& \gamma_{l,i+1}^{-}\left( 2 \epsilon \theta_{2}^2 \big(\frac{h_{i+1}}{h_{i+1}+{h_i}}-1\big) +2\epsilon \theta_{2}^2 -\mu a(x_i) \theta_2 (1+\theta_{2}h_{i+1})- b(x_i)(1+\theta_{2}h_{i+1}). \right)\\
 &\le& \gamma_{l,i+1}^{-}\left(2\epsilon \theta_{2}^2 -\mu a(x_i) \theta_2 (1+\theta_{2}h_{i+1})- b(x_i)(1+\theta_{2}h_{i+1}) \right)~\text{as } \frac{h_{i+1}}{h_{i+1}+{h_i}}-1\leq  0 
 \end{eqnarray*}
For both the cases $\sqrt{\alpha}\mu\le \sqrt{\rho \epsilon}$ and 
 $\sqrt{\alpha}\mu > \sqrt{\rho \epsilon}$, on simplification, we get
\begin{eqnarray*}
	\mathcal{L}^N \eta^{-}_{l,i} &\le & \gamma_{l,i+1}^{-}\left( 2 \epsilon \theta_{2}^2-\mu a(x_i) \theta_2 -b(x_i) \right) \quad \text{ as } -(\mu a(x_i) \theta_2^2+ b(x_i) \theta_{2}) h_{i+1} \leq 0\\
 &\leq & \gamma_{l,i+1}^{-} \left(\frac{\rho \alpha}{2}+ \frac{\rho \lvert a(x_i)\rvert }{2}-b(x_i) \right) \\
 &\leq& 0.
 \end{eqnarray*}
By discrete minimum principle for the continuous case \cite{RPS}, we obtain  \[\eta^{-}_{l,i} \geq 0 \implies W_{l}^{*-}(x_{i}) \leq C\prod_{k=1}^{i}(1+\theta_{2}h_{k})^{-1}, \; 1\leq i \leq \frac{N}{2}.\]
For $\frac{N}{2}+1 \leq i \leq N,$ consider the barrier function for the left layer term as:
\[\eta^{+}_{l,i}= \gamma_{l,i}^{+} \pm W_{l}^{*+}(x_{i}),\quad   \frac{N}{2} \leq i \leq N.\]	
For large enough $C$ and $C_1$, $\eta^{+}_{l,N/2}\geq 0$ and $\eta^{+}_{l,N}\geq 0$.

Consider 
 \begin{eqnarray*}
	\mathcal{L}^N \eta^{+}_{l,i}&=& \mathcal{L}^N  \gamma_{l,i}^{+} \pm \mathcal{L}^N W_{l}^{*+}(x_{i})\\
	&=& \gamma_{l,i+1}^{+}\left( 2 \epsilon \theta_{1}^2 \big(\frac{h_{i+1}}{h_{i+1}+{h_i}}-1\big) +2\epsilon \theta_{1}^2 -\mu a(x_i) \theta_1 - b(x_i)(1+\theta_{1}h_{i+1})  \right)\\
	&\le& \gamma_{l,i+1}^{+}\left(2\epsilon \theta_{1}^2 -\mu a(x_i) \theta_1- b(x_i)(1+\theta_{1}h_{i+1})  \right)\quad \text{as}~ \frac{h_{i+1}}{h_{i+1}+{h_i}}-1\leq  0 \\
	&\leq & \gamma_{l,i+1}^{+}\left( 2 \epsilon \theta_{1}^2-\mu a(x_i) \theta_1 -b(x_i) \right) \quad (\text{ as } b(x_i) \theta_{1} h_{i+1} \geq 0).
\end{eqnarray*}
For case $\sqrt{\alpha}\mu\le \sqrt{\rho \epsilon},\; \displaystyle \theta_1= \frac{\sqrt{\rho \alpha}}{2 \sqrt{\epsilon}}$, the above expression becomes,
\begin{eqnarray*}
	\mathcal{L}^N \eta^{+}_{l,i}&\leq & \gamma_{l,i+1}^{+} \left(\frac{\rho \alpha}{2}- \mu a(x_i) \frac{ \sqrt{\rho \alpha}}{2 \sqrt{\epsilon}}-b(x_i) \right)\\
	&\leq & \gamma_{l,i+1}^{+} \left(\rho \alpha - b(x_i) - \mu a(x_i) \frac{ \sqrt{\rho \alpha}}{2 \sqrt{\epsilon}} \right) \leq 0.
\end{eqnarray*}
For the case $\sqrt{\alpha}\mu > \sqrt{\rho \epsilon},\; \displaystyle \theta_1= \frac{\mu \alpha}{2 \epsilon}$, we obtain
\begin{eqnarray*}
	\mathcal{L}^N \eta^{+}_{l,i}&\leq & \gamma_{l,i+1}^{+} \left(\frac{\mu^2 \alpha^2}{2\epsilon}- \mu a(x_i) \frac{\mu \alpha}{2 \epsilon}-b(x_i) \right)\\
	&\leq & \gamma_{l,i+1}^{+} \left( - b(x_i)  \right) \leq 0.
\end{eqnarray*}
Hence by discrete minimum principle for continuous case \cite{RPS}, we obtain  \[\eta^{+}_{l,i} \geq 0 \implies W_{l}^{*+}(x_{i}) \leq C\prod_{k=\frac{N}{2}+1}^{i}(1+\theta_{1}h_{k})^{-1}, \; \frac{N}{2}+1 \leq i \leq N.\]
Similarly, we define the barrier function for the right layer component  as
\[\eta^{-}_{r,i}= \gamma_{l,i}^{-} \pm W_{r}^{*-}(x_{i}),\quad  0\leq i \leq \frac{N}{2}.\]	
For large enough $C$ and $C_1$, $\eta^{-}_{r,0}\geq 0$ and $\eta^{-}_{r,N/2}\geq 0$.
Consider,
\begin{eqnarray*}
	\mathcal{L}^N \eta^{-}_{r,i}&=& \mathcal{L}^N  \gamma_{r,i}^{-} \pm \mathcal{L}^N W_{r}^{*-}(x_{i})\\
	&=& \frac{\gamma_{r,i}^{-}}{1+\theta_1h_{i}}\left( 2 \epsilon \theta_{1}^2 \big(\frac{h_{i}}{h_{i+1}+{h_i}}-1\big) +2\epsilon \theta_{1}^2 +\mu a(x_i) \theta_1 - b(x_i)(1+\theta_{1}h_{i})  \right)\\
&\le & \frac{\gamma_{r,i}^{-}}{1+\theta_1h_{i}}\left( 2 \epsilon \theta_{1}^2+\mu a(x_i) \theta_1 -b(x_i) \right) \quad \text{as } \frac{h_{i+1}}{h_{i+1}+{h_i}}-1\leq  0  \text{ and } - b(x_i) \theta_{1} h_{i} \leq 0.
\end{eqnarray*}
For both the cases $\sqrt{\alpha}\mu\le \sqrt{\rho \epsilon}$ and 
 $\sqrt{\alpha}\mu > \sqrt{\rho \epsilon}$, on simplification, we get
\begin{eqnarray*}
	\mathcal{L}^N \eta^{-}_{r,i}&\leq & \frac{\gamma_{r,i}^{-}}{1+\theta_1h_{i}}\left(-b(x_i) \right) \leq 0.
\end{eqnarray*}
By discrete minimum principle for the continuous case \cite{RPS}, we obtain  \[\eta^{-}_{r,i} \geq 0 \implies W_{r}^{*-}(x_{i}) \leq C\prod_{k=i+1}^{N/2}(1+\theta_{2}h_{k})^{-1}, \; 1\leq i \leq \frac{N}{2}.\]
Similarly, we prove the bound for $ W_{r}^{*+}$ for $\frac{N}{2}+1 \leq i \leq N-1.$
\end{proof}	

\begin{lemma} \label{regular part}
	The error in the regular component satisfies the following error estimates for the mesh points, $x_{i}\in {\Omega}_{N}$
	$$\lvert (V^{*}-v^{*})(x_i)\rvert \le CN^{-1},$$
	where $V^{*}$ and $v^{*}$ are the regular part of the continuous and the discrete solution as defined by equations \eqref{regular discrete} and \eqref{regular}, respectively.
\end{lemma}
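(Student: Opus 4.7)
The plan is to work separately on each side of the discontinuity. Define
\[
E^{-}(x_i):=V^{*-}(x_i)-v^{*-}(x_i),\quad 0\le i\le N/2,\qquad E^{+}(x_i):=V^{*+}(x_i)-v^{*+}(x_i),\quad N/2\le i\le N.
\]
By the boundary and interface conditions in (\ref{regular discrete}) together with those on $v^{*}$ in (\ref{regular}), each of $E^{\pm}$ vanishes at the two endpoints of its subinterval. For interior mesh points,
\[
\mathcal{L}^{N}E^{\pm}(x_i)=f(x_i)-\mathcal{L}^{N}v^{*\pm}(x_i)=(\mathcal{L}-\mathcal{L}^{N})v^{*\pm}(x_i),
\]
so the problem reduces to bounding the consistency error and invoking a barrier function argument.

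Standard Taylor remainders on a non-uniform grid give
\[
\lvert\delta^{2}v(x_i)-v''(x_i)\rvert\le C(h_i+h_{i+1})\|v^{*(3)}\|_{[x_{i-1},x_{i+1}]},\qquad \lvert D^{\pm}v(x_i)-v'(x_i)\rvert\le C(h_i+h_{i+1})\|v^{*(2)}\|_{[x_{i-1},x_{i+1}]},
\]
which yields $\lvert(\mathcal{L}^{N}-\mathcal{L})v^{*}(x_i)\rvert\le C(h_i+h_{i+1})\bigl(\epsilon\|v^{*(3)}\|+\mu\|v^{*(2)}\|\bigr)$. Inserting the regular-component bounds proved in Section \ref{sec3}: in the regime $\sqrt{\alpha}\mu\le\sqrt{\rho\epsilon}$ one has $\|v^{*(2)}\|\le C$ and $\|v^{*(3)}\|\le C/\sqrt{\epsilon}$, so $\epsilon\|v^{*(3)}\|+\mu\|v^{*(2)}\|\le C(\sqrt{\epsilon}+\mu)$; in the regime $\sqrt{\alpha}\mu>\sqrt{\rho\epsilon}$ one has $\|v^{*(2)}\|\le C$ and $\|v^{*(3)}\|\le C(1+\mu/\epsilon)$, so $\epsilon\|v^{*(3)}\|+\mu\|v^{*(2)}\|\le C(\epsilon+\mu)$. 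Under the standing hypotheses $\sqrt{\epsilon}<N^{-1}$ (case 1) and $\max\{\mu,\epsilon/\mu\}<N^{-1}$ (case 2), both of these quantities are $\le CN^{-1}$.

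Next, control the mesh spacings uniformly. From Lemma \ref{assump} and the remarks following it, $h_i\le C/\theta_{1}$ or $h_i\le C/\theta_{2}$ on each graded sub-interval, while $h_i\le CN^{-1}$ on each uniform sub-interval; since under the standing hypotheses $1/\theta_{1}$ and $1/\theta_{2}$ are both $\le CN^{-1}$, we obtain $h_i\le CN^{-1}$ globally. Combining with the previous paragraph gives $\lvert(\mathcal{L}^{N}-\mathcal{L})v^{*}(x_i)\rvert\le CN^{-1}$ (in fact $O(N^{-2})$) at every interior mesh point, on both sides of the discontinuity.

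Finally, apply a barrier. Take the constant mesh function $\Phi(x_i):=MN^{-1}$; then $\mathcal{L}^{N}\Phi(x_i)=-b(x_i)\Phi(x_i)\le-\gamma MN^{-1}$, and for $M$ sufficiently large the functions $\Phi\pm E^{\pm}$ are non-negative at both endpoints of the corresponding subinterval and satisfy $\mathcal{L}^{N}(\Phi\pm E^{\pm})(x_i)\le 0$. Since $E^{\pm}$ vanishes at $x_{N/2}$, no jump condition is needed, and the standard discrete minimum principle applied on each subinterval separately (a one-sided version of Lemma \ref{minimun}) delivers $\lvert E^{\pm}(x_i)\rvert\le MN^{-1}$, which is the claimed estimate. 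The main obstacle is the careful bookkeeping of derivative bounds versus mesh-spacing estimates across both parameter regimes and across graded and uniform sub-intervals, so that the products $\epsilon(h_i+h_{i+1})\|v^{*(3)}\|$ and $\mu(h_i+h_{i+1})\|v^{*(2)}\|$ end up bounded by $CN^{-1}$ uniformly in $\epsilon$ and $\mu$; once those pairings are in place the barrier step is routine.
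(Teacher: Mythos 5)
Your proposal is correct and follows essentially the same route as the paper: bound the truncation error $(\mathcal{L}-\mathcal{L}^{N})v^{*\pm}$ on each side of $d$ using the regular-component derivative bounds (together with the standing assumptions $\sqrt{\epsilon}<N^{-1}$, resp. $\max\{\mu,\epsilon/\mu\}<N^{-1}$), then apply a constant barrier $CN^{-1}$ and the one-sided discrete minimum principle on each subinterval, using that $V^{*\pm}-v^{*\pm}$ vanishes at the endpoints. Your write-up simply makes explicit the Taylor-remainder and mesh-width bookkeeping that the paper leaves implicit.
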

\begin{proof}
	The truncation error for the regular part of the solution $y$ of the equation \eqref{twoparameter}  for both the cases $\sqrt{\alpha}\mu\le \sqrt{\rho \epsilon}$ and $\sqrt{\alpha}\mu>\sqrt{\rho\epsilon},$ is 
	\begin{align*}
\lvert \mathcal{L}^{N}(V^{*-}-v^{*-})(x_{i})\rvert &=\vert \mathcal{L}^{N}v^{*-}(x_{i})-f(x_{i})\rvert \\
&\le \bigg\lvert \epsilon \bigg(\delta^{2}-\frac{d^{2}}{dx^{2}}\bigg)v^{*-}(x_{i})\bigg\rvert +\mu \lvert a(x_{i})\rvert  \bigg\lvert \bigg(D^{-}-\frac{d}{dx}\bigg)v^{*-}(x_{i})\bigg\rvert \\
&\le CN^{-1},~~~~~ \text{for}~~ 1\le i\le \frac{N}{2}-1.
	\end{align*}
Similarly
\begin{align*}
\lvert \mathcal{L}^{N}(V^{*+}-v^{*+})\rvert \le CN^{-1},~~~~ \text{for}~~ \frac{N}{2}+1\le i\le N-1.
\end{align*}
Define the barrier function
$$\psi^{\pm}(x_{i})=CN^{-1}\pm(V^{*-}-v^{*-})(x_{i}),~~~ 1\le i\le \frac{N}{2}-1.$$
For large C,  $\psi^{\pm}(0)\ge0, \; \psi^{\pm}(x_{\frac{N}{2}})\ge0$ and $\mathcal{L}^N\psi^{\pm}(x_{i})\le0$. 
Hence using the approach given in \cite{FHMRS1}, we get $\psi^{\pm}(x_{i})\ge0$ and 
\begin{equation}
\label{V-}
\lvert (V^{*-}-v^{*-})(x_i)\rvert \le CN^{-1},~~~1\le i\le \frac{N}{2}-1.
\end{equation}
Similarly, 
\begin{equation}
\label{V+}
\lvert (V^{*+}-v^{*+})(x_i)\rvert \le CN^{-1},~~~\frac{N}{2}+1\le i\le N-1.
\end{equation}
Combining the above results, we obtain
\[\lvert (V^{*}-v^{*})(x_i)\rvert \le CN^{-1},\quad \forall~ x_i\in\Omega_{N}.\]
\end{proof}
\begin{lemma} \label{left layer}
The left singular component of the truncation error satisfy the following estimate at mesh point $x_i\in\Omega_{N}$
$$\lvert (W_{l}^{*}-w_{l}^{*})(x_i)\rvert \le  CN^{-1},$$
where $W_l^{*}$ and $w_l^{*}$ are the discrete and the continuous left layer components satisfying the  equations (\ref{Wleft}) and \eqref{left continuous}, respectively.
\end{lemma}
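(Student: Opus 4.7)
My approach is to prove the estimate via a discrete barrier argument on each half-domain, after obtaining a uniform $O(N^{-1})$ bound for the truncation error
\[ \tau_i := \mathcal{L}^{N}(W_l^{*}-w_l^{*})(x_i) = (\mathcal{L}-\mathcal{L}^{N})w_l^{*}(x_i) \]
on each subregion of the Shishkin-Bakhvalov mesh. By symmetry it suffices to estimate $W_l^{*-}-w_l^{*-}$ on $\Omega_N^-$; the error vanishes at $x_0=0$ and at $x_{N/2}=d$ by the matching boundary data in \eqref{left continuous} and \eqref{Wleft}, and the argument for $W_l^{*+}-w_l^{*+}$ on $\Omega_N^+$ is identical with the roles of $\theta_2$ and $\theta_1$ interchanged.

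A Taylor expansion of the upwind and central differences defining $\mathcal{L}^N$ gives
\[ |\tau_i| \;\leq\; C\epsilon(h_i+h_{i+1})\,\|w_l^{*(3)}\|_{[x_{i-1},x_{i+1}]} + C\mu h_i\,\|w_l^{*(2)}\|_{[x_{i-1},x_i]}. \]
I split $\Omega_N^-$ into the fine layer subregion near $x=0$ (indices $1\leq i \leq N/8$), the uniform coarse subregion (indices $N/8 < i < 3N/8$), and the fine subregion near $x=d$ (indices $3N/8 \leq i \leq N/2$). In the coarse middle subregion and the fine piece near $d$, the derivative bounds from Theorem \ref{boundswl} (case 1) or Theorem \ref{wr} (case 2) together with the fact that $e^{-\theta_2 x}\leq N^{-4}$ for $x\geq \sigma_1$ show that $|w_l^{*(k)}|$ is $O(N^{-4})$ times an algebraic factor in $N$, yielding $|\tau_i| \leq C N^{-2}$ or better there. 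In the fine subregion near $x=0$, the spacing bound $h_i \leq C/\theta_2$ from Lemma \ref{assump} combined with the graded structure $h_i\,e^{-\theta_2 x_i}\sim C/(\theta_2 N)$ forced by the mesh-generating function $\phi_1$ absorbs the $\epsilon^{-k/2}$ (or $\mu^{-k}$ in case 2) factors carried by $w_l^{*(k)}$, and produces $|\tau_i|\leq CN^{-1}$ without a logarithmic factor.

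With $|\tau_i|\leq CN^{-1}$ uniformly, I apply the discrete minimum principle (Lemma \ref{minimun}) to the constant barrier $\Psi^{\pm}(x_i) = CN^{-1} \pm (W_l^{*-}-w_l^{*-})(x_i)$ on $0\leq i \leq N/2$. Positivity at the two endpoints is immediate since the error vanishes there, and
\[ \mathcal{L}^{N}\Psi^{\pm}(x_i) = -b(x_i)CN^{-1}\pm\tau_i \;\leq\; -\gamma CN^{-1}+|\tau_i|\;\leq\; 0 \]
for $C$ sufficiently large, using $b(x_i)\geq\gamma>0$. This yields $|(W_l^{*-}-w_l^{*-})(x_i)|\leq CN^{-1}$ on $\Omega_N^-$, and the analogous argument on $\Omega_N^+$ completes the claim. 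The main obstacle is the fine-mesh truncation estimate in the layer near $x=0$: the derivatives of $w_l^{*}$ blow up like $\epsilon^{-k/2}$ in case 1 and $(\mu/\epsilon)^k$ or $\mu^{-k}$ in case 2, and only the special structure of the Shishkin-Bakhvalov mesh-generating functions encoded in Lemma \ref{assump}, which effectively linearizes the layer profile on the graded segment, causes the product $h_i \cdot \|w_l^{*(k)}\|_{\text{local}}$ to collapse to $O(N^{-1})$ uniformly in both parameter regimes $\sqrt{\alpha}\mu\leq\sqrt{\rho\epsilon}$ and $\sqrt{\alpha}\mu>\sqrt{\rho\epsilon}$, avoiding the $\ln N$ penalty of the Shishkin mesh.
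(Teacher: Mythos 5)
Your fine-region argument (near $x=0$: change of variables through the mesh-generating function $\phi_1$, absorbing the $\epsilon^{-k/2}$ or $\mu^{-k}$ factors via Lemma \ref{assump}, then a barrier plus the discrete minimum principle) is essentially the paper's argument there. The genuine gap is in how you treat the region outside the layer, i.e.\ the coarse subregion and the fine piece near $d$. You claim that the Taylor-type bound
$\lvert\tau_i\rvert \le C\epsilon(h_i+h_{i+1})\|w_l^{*(3)}\| + C\mu h_i\|w_l^{*(2)}\|$
combined with $e^{-\theta_2 x}\le N^{-4}$ for $x\ge\sigma_1$ gives $\lvert\tau_i\rvert\le CN^{-2}$ there, on the grounds that the derivative bounds contribute only ``an algebraic factor in $N$.'' They do not: by Theorems \ref{boundswl} and \ref{wr} the factors are $\epsilon^{-k/2}$ (case $\sqrt{\alpha}\mu\le\sqrt{\rho\epsilon}$) or $\mu^{-k}$, $(\mu/\epsilon)^k$ (case $\sqrt{\alpha}\mu>\sqrt{\rho\epsilon}$), and $\epsilon$, $\mu$ are not bounded below by any power of $N$ (only $\sqrt{\epsilon}<N^{-1}$, resp.\ $\max\{\epsilon/\mu,\mu\}<N^{-1}$, is assumed). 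For instance in case 1 your bound gives $\epsilon(h)\|w_l^{*(3)}\|\le C\,\epsilon\,N^{-1}\epsilon^{-3/2}N^{-4}=C\epsilon^{-1/2}N^{-5}$, which is not $O(N^{-2})$ (nor even $O(N^{-1})$) when, say, $\epsilon=10^{-16}$ and $N=64$. Consequently the hypothesis $\lvert\tau_i\rvert\le CN^{-1}$ that your global constant barrier on all of $\Omega_N^-$ relies upon is not established, and the barrier step collapses.

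The paper avoids a consistency estimate outside the layer altogether. For $x_i\in[\sigma_1,d)$ it bounds the two components separately and uses the triangle inequality: $\lvert w_l^{*-}(x_i)\rvert\le Ce^{-\theta_2\sigma_1}\le CN^{-4}$ from the continuous layer bounds, and, crucially, $\lvert W_l^{*-}(x_i)\rvert\le C\prod_{k=1}^{N/8}(1+\theta_2 h_k)^{-1}\le CN^{-4}$ from the discrete layer bound of Lemma \ref{discrete singular} together with the estimate $\log\prod_{k=1}^{N/8}(1+\theta_2 h_k)\ge \theta_2\sigma_1-\sum_k(\theta_2 h_k/2)^2\ge 4\log N-C$, which uses $\log(1+t)\ge t-t^2/2$ and the integral condition on $\phi_1'$ in Lemma \ref{assump}. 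This discrete decay estimate for $W_l^{*-}$ is the ingredient your proposal never invokes, and it is what makes the outer region work uniformly in $\epsilon$ and $\mu$; the truncation-error-plus-barrier argument is then applied only on $(0,\sigma_1)$, where your version of the estimate is sound. To repair your proof you would either need to import that product bound (and restrict the barrier to the fine region, using the $CN^{-4}$ bound as boundary data at $x_{N/8}$), or replace the third-derivative form of the truncation error in the outer region by an estimate in terms of lower-order divided differences so that the assumption $\sqrt{\epsilon}<N^{-1}$ (resp.\ $\max\{\epsilon/\mu,\mu\}<N^{-1}$) can absorb the negative powers of the parameters.
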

\begin{proof}
	In $[\sigma_{1}, d)$ i.e., for $ \frac{N}{8} \leq i < \frac{N}{2}$, from Theorem (\ref{boundswl}), we obtain

	\begin{equation}
	\label{wl}
	\lvert w_{l}^{*-}(x_{i})\rvert \le C\exp^{-\theta_{2}x_{i}}\le C\exp^{- \theta_{2} \sigma_{1}}\le CN^{-4}.
	\end{equation}
	Also from Lemma (\ref{discrete singular}), we have that $W_{l}^{*-}$ is a monotonically decreasing function, so
	$$ \lvert W_{l}^{*-}(x_{i})\rvert \le C\prod_{k=1}^{\frac{N}{8}}(1+\theta_{2}h_{k})^{-1}, \text{ for } \frac{N}{8} \leq i < \frac{N}{2}.$$
	Now, 	
	\begin{align*}
	\lvert \gamma_{l,\frac{N}{8}}^{-}\rvert &= \prod_{k=1}^{\frac{N}{8}}(1+\theta_{2}h_{k})^{-1}\\
	\implies \log(\gamma_{l,\frac{N}{8}}^{-})&=-\sum_{k=1}^{\frac{N}{8}}\log (1+\theta_{2}h_{k}).	\end{align*}
Consider,
	 \begin{eqnarray*}
	 	\log\bigg(\prod_{k=1}^{\frac{N}{8}} (1+\theta_{2}h_{k})\bigg) &\ge&  \sum_{k=1}^{\frac{N}{8}}\theta_{2}h_{k}-\sum_{k=1}^{\frac{N}{8}}\bigg(\frac{\theta_{2} h_{k}}{2}\bigg)^2, (\text{ as } \log(1+t) \ge t-\frac{t^2}{2}~ \text{for}~ t\geq 0)\\
	 	&=& \theta_2 \sigma_1 - \sum_{k=1}^{\frac{N}{8}}\bigg(\frac{\theta_{2} h_{k}}{2}\bigg)^2
	 	~~\bigg(\text{as} \sum_{k=1}^{\frac{N}{8}}h_{k}=x_{\frac{N}{8}}\bigg).
\end{eqnarray*}
	Next, we calculate $\displaystyle \sum_{k=1}^{\frac{N}{8}}\bigg(\frac{\theta_{2} h_{k}}{2}\bigg)^2$.\\
	For $1\le k\le \frac{N}{8},$\\
	\begin{align*}
	h_{k}=x_{k}-x_{k-1}&=\frac{8}{\theta_{2}}(\phi_{1}(\xi_{k})-\phi_{1}(\xi_{k-1}), \; \xi=\frac{k}{N}\\
	&=\int_{\xi_{k-1}}^{\xi_{k}}\phi_{1}'(\xi)d\xi\\
	\frac{\theta_{2}h_{k}}{8} &= \int_{\xi_{k-1}}^{\xi_{k}}\phi_{1}'(\xi)d\xi \\
	\implies
	\bigg(\frac{\theta_{2}h_{k}}{8}\bigg)^2 &\le(\xi_{k}-\xi_{k-1})\int_{\xi_{k-1}}^{\xi_{k}}\phi_{1}'(\xi)^{2}d\xi, \; \text{ by Holder's inequality}\\
\sum_{k=1}^{\frac{N}{8}}\bigg(\frac{\theta_{2}h_{k}}{8}\bigg)^2 
&\le \sum_{k=1}^{\frac{N}{8}}(\xi_{k}-\xi_{k-1})\int_{\xi_{k-1}}^{\xi_{k}}\phi_{1}'(\xi)^{2}d\xi,\\
&\le N^{-1}\int_{0}^{\frac{1}{8}}\phi_{1}'(\xi)^{2}d\xi\\
&\le C. \quad (\text{from Lemma \ref{assump}})
	\end{align*}
	So
	\begin{align*}
 \lvert \gamma_{l,\frac{N}{8}}^{-}\rvert &\le CN^{-4}\\
		\lvert W_{l}^{*-}(x_{i})\rvert &\le CN^{-4}, \quad  \text{for} \; \frac{N}{8} \leq i < \frac{N}{2}.
	\end{align*}
	Hence for all $x_{i}\in [\sigma_{1},d)$  we have
	$$\lvert (W_{l}^{*-}-w_{l}^{*-})(x_{i})\rvert \le \lvert W_{l}^{*-}(x_{i})\rvert +\lvert w_{l}^{*-}(x_{i})\rvert \le CN^{-4}.$$

 For $\sqrt{\alpha} \mu\le\sqrt{\rho\epsilon} $, the  truncation error for the left layer component  in the inner region $(0,\sigma_{1}),$ i.e., for $i=1,2,\ldots, \frac{N}{8}-1$, is
	\begin{align*}
	\lvert \mathcal{L}^{N}(W_{l}^{*-}-w_{l}^{*-})(x_{i})\rvert &\le C\bigg[\epsilon \int_{x_{i-1}}^{x_{i+1}}\lvert w_{l}^{*-(3)}(x_{i})\rvert dx+\mu \lvert a(x_{i})\rvert \int_{x_{i}}^{x_{i+1}}\lvert w_{l}^{*-(2)}(x_{i})\rvert dx\bigg]\\
	&\le \frac{C}{\sqrt{\epsilon}}\bigg[\int_{x_{i-1}}^{x_{i+1}}e^{-\theta_{2}x}dx+\int_{x_{i}}^{x_{i+1}}e^{-\theta_{2}x}dx\bigg], \quad (\text{from Theorem \ref{boundswl}})  \\
	&\le \frac{C}{\sqrt{\epsilon}}\bigg[\int_{\xi_{i-1}}^{\xi_{i+1}}e^{-8\phi_{1}(\xi)}\frac{\phi_{1}'(\xi)}{\theta_2}d\xi+\int_{\xi_{i}}^{\xi_{i+1}}e^{-8\phi_{1}(\xi)}\frac{\phi_{1}'(\xi)}{\theta_2}d\xi\bigg]\\
	& \hspace{2.5in} \bigg(\text{ as }\; x=\frac{8}{\theta_2}\phi_{1}(\xi)\bigg) \\
	&\leq  \frac{C}{\sqrt{\epsilon}}\bigg[\int_{\xi_{i-1}}^{\xi_{i+1}}e^{-7\phi_{1}(\xi)}\lvert \psi_{1}'(\xi)\rvert d\xi+\int_{\xi_{i}}^{\xi_{i+1}}e^{-7\phi_{1}(\xi)}\lvert \psi_{1}'(\xi)\rvert d\xi\bigg] \\&\le CN^{-1}e^{\frac{-7}{8}\theta_{2}x_{i}}\max\lvert \psi_{1}'\rvert \\&\le CN^{-1}~~(\text{as}~\max\vert \psi_{1}'\rvert \le 8).
	\end{align*}
	We choose the barrier function for the layer component as
	$$\psi^{\pm}(x_{i})=CN^{-1}\pm(W_{l}^{*-}-w_{l}^{*-})(x_{i}),\; i=1,2,\ldots, \frac{N}{8}-1.$$
	For sufficiently large $C$, we have $\mathcal{L}^{N}\psi_{i}\le0$.  Hence by discrete maximum principle in \cite{RPS}, $\psi_{i}\ge0$. So, by the comparison principle, we can obtain the following bounds:
	$$\lvert (W_{l}^{*-}-w_{l}^{*-})(x_{i})\rvert \le CN^{-1} \,\quad  \forall \; 1\leq i \leq \frac{N}{8}-1.$$
	For $\sqrt{\alpha} \mu>\sqrt{\rho\epsilon}$, the  truncation error for the left layer component  for $i=1,2,\ldots, \frac{N}{8}-1$ is given by
	\begin{align*}
	\lvert  \mathcal{L}^{N}(W_{l}^{*-}-w_{l}^{*-})(x_{i})\rvert &\le C\bigg(\epsilon \int_{x_{i-1}}^{x_{i+1}}\lvert w_{l}^{*-(3)}(x_{i})\rvert dx+\mu \lvert a(x_{i})\rvert \int_{x_{i}}^{x_{i+1}}\lvert w_{l}^{*-(2)}(x_{i})\rvert dx\bigg)\\
	&\le \frac{C_{1}\epsilon}{\mu^3}\bigg[\int_{x_{i-1}}^{x_{i+1}}e^{-\theta_{2}x}\bigg]dx+\frac{C_{2}}{\mu}\bigg[\int_{x_{i}}^{x_{i+1}}e^{-\theta_{2}x}dx\bigg] \quad (\text{using Theorem \ref{wr}})\\&\le C\bigg[\int_{\xi_{i-1}}^{\xi_{i+1}}e^{-7\phi_{1}(\xi)}\lvert \psi_{1}'(\xi)\rvert d\xi+\int_{\xi_{i}}^{\xi_{i+1}}e^{-7\phi_{1}(\xi)}\lvert \psi_{1}'(\xi)\rvert d\xi\bigg] \\ 
	&\le CN^{-1} \max\lvert \psi_{1}'\rvert  \leq CN^{-1}\quad (\text{as}~\max\lvert \psi_{1}'\rvert \le 8).
	\end{align*}
	Choosing a barrier function for the layer component as
	$$\psi^{\pm}(x_{i})=CN^{-1}\pm(W_{l}^{*-}-w_{l}^{*-})(x_{i}), \; \forall \; 1\leq i \leq \frac{N}{8}-1.$$
For sufficiently large $C$, we have  $\mathcal{L}^{N}\psi_{i}\le0$. Using the discrete minimum principle in \cite{RPS},    we can obtain the following bounds:
	$$\lvert (W_{l}^{*-}-w_{l}^{*-})(x_{i})\rvert \le CN^{-1},  \; \forall \; 1\leq i \leq \frac{N}{8}-1.$$
	Hence for the left layer component
	\begin{equation}
	\label{w-}
	\lvert (W_{l}^{*-}-w_{l}^{*-})(x_i)\rvert \le CN^{-1},\quad \forall \; 1\leq i \leq \frac{N}{2}-1. 
		\end{equation}
By similar argument in the domains  $(d, 1-\sigma_4]$ and $(1-\sigma_4,1)$, we have 
	\begin{equation}
	\label{w+}
	\lvert (W_{l}^{*+}-w_{l}^{*+})(x_i)\rvert \le CN^{-1}, \quad \forall \; \frac{N}{2}+1 \leq i \leq N-1.
	\end{equation}
	Combining the results (\ref{w-}) and (\ref{w+}), the desired result is obtained.
\end{proof}
\begin{lemma} \label{right layer}
	The right singular component of the truncation error satisfies the following approximation for each mesh point, $x_{i}\in\Omega_{N}$
\[	\lvert (W_{r}^{*}-w_{r}^{*})(x_i)\rvert \le 	\displaystyle CN^{-1},\]
	where $W_r^{*}$ and $w_r^{*}$ are the  discrete and the continuous right layer components satisfying the  equations (\ref{Wright}) and (\ref{right continuous}), respectively.
\end{lemma}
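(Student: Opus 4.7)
The plan is to mirror the structure of the proof of Lemma \ref{left layer}, swapping the roles of the mesh generating functions and transition points to reflect that $w_r^*$ concentrates near $x=d$ in $\Omega^-$ and near $x=1$ in $\Omega^+$. I would split the argument into four pieces, namely an outer region and an inner (layer) region on each side of the discontinuity, and then combine the bounds.

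First, in the outer part of $\Omega^-$, that is $x_i \in (0, d-\sigma_2]$, both components are exponentially small. Theorem \ref{boundswl} (respectively Theorem \ref{wr} in the second parameter regime) yields $|w_r^{*-}(x_i)| \le C e^{-\theta_1 (d-x_i)} \le C e^{-\theta_1 \sigma_2} \le CN^{-4}$. For the discrete side, Lemma \ref{discrete singular} together with the monotonicity of $i \mapsto \gamma_{r,i}^{-}$ gives $|W_r^{*-}(x_i)| \le \gamma_{r, 3N/8}^{-}$ on the whole outer region. Taking logarithms, applying $\log(1+t) \ge t - t^2/2$, and using H\"older's inequality together with Lemma \ref{assump} for $\phi_2$ on $[\tfrac38,\tfrac12]$ reduces $\log \gamma_{r,3N/8}^{-}$ to $-\theta_1 \sigma_2 + \mathcal{O}(1) = -4 \ln N + \mathcal{O}(1)$, so that $|W_r^{*-}(x_i)| \le CN^{-4}$. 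Subtraction then gives $|(W_r^{*-}-w_r^{*-})(x_i)| \le CN^{-4}$ on this outer piece.

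In the inner region $x_i \in (d-\sigma_2, d)$, I would bound the local truncation error in the spirit of Lemma \ref{left layer},
\[|\mathcal{L}^N(W_r^{*-}-w_r^{*-})(x_i)| \le C\Big(\epsilon \int_{x_{i-1}}^{x_{i+1}} |w_r^{*-(3)}(s)|\,ds + \mu |a(x_i)| \int_{x_i}^{x_{i+1}} |w_r^{*-(2)}(s)|\,ds\Big),\]
insert the derivative bounds of Theorem \ref{boundswl} or \ref{wr} according to the parameter regime, and change variables via $x = d - \frac{8}{\theta_1}\phi_2(\xi)$ so that $e^{-\theta_1 (d-x)} = e^{-8\phi_2(\xi)}$. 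Pulling out one factor $e^{-\phi_2(\xi)} = \psi_2(\xi)$ and rewriting the remaining factor through $|\psi_2'(\xi)|$, which is uniformly bounded exactly as for $\psi_1$ in Lemma \ref{left layer}, produces $|\mathcal{L}^N(W_r^{*-}-w_r^{*-})(x_i)| \le CN^{-1}$. The barrier function $\psi^{\pm}(x_i) = CN^{-1} \pm (W_r^{*-}-w_r^{*-})(x_i)$ together with the discrete minimum principle (Lemma \ref{minimun}) then upgrades the truncation estimate to $|(W_r^{*-}-w_r^{*-})(x_i)| \le CN^{-1}$ throughout $\Omega_N^-$. An identical program in $\Omega_N^+$ using the mesh generating function $\phi_4$ and the transition point $\sigma_4$ gives the corresponding bound, and combining the two pieces closes the proof.

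The main obstacle will be the uniform treatment of the two parameter regimes $\sqrt{\alpha}\mu \le \sqrt{\rho\epsilon}$ and $\sqrt{\alpha}\mu > \sqrt{\rho\epsilon}$, since the $(\epsilon,\mu)$-weights in the derivative bounds of Theorems \ref{boundswl} and \ref{wr} are genuinely different. I will need to verify in each regime that the combination $\epsilon \cdot \|w_r^{*(3)}\| + \mu|a|\cdot \|w_r^{*(2)}\|$, once paired with the pointwise exponential decay of $w_r^*$ and the mesh-characterizing function $\psi_2$ (or $\psi_4$), collapses to a \emph{parameter-independent} integrand that integrates to $\mathcal{O}(N^{-1})$ on a single mesh interval. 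Once that bookkeeping is confirmed in both regimes, the remainder is a cosmetic adjustment of the argument already used for the left layer.
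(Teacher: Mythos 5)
Your outer-region argument and the first parameter regime are handled exactly as in the paper: exponential smallness of $w_r^{*-}$ on $(0,d-\sigma_2]$ via Theorem \ref{wr}, smallness of $W_r^{*-}$ via Lemma \ref{discrete singular} together with $\log(1+t)\ge t-t^2/2$, H\"older's inequality and Lemma \ref{assump}, and then, for $\sqrt{\alpha}\mu\le\sqrt{\rho\epsilon}$, a truncation-error estimate of order $N^{-1}$ on $(d-\sigma_2,d)$ followed by the constant barrier $CN^{-1}\pm(W_r^{*-}-w_r^{*-})$.

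However, there is a genuine gap precisely at the point you flag as ``bookkeeping'': in the regime $\sqrt{\alpha}\mu>\sqrt{\rho\epsilon}$ the integrand does \emph{not} collapse to a parameter-independent quantity for the right layer component near $d$ in $\Omega^-$. There the derivative bounds of Theorem \ref{wr} carry weights $(\mu/\epsilon)^k$ while $\theta_1=\alpha\mu/(2\epsilon)$, so after the change of variables $x=d-\frac{8}{\theta_1}\phi_2(\xi)$ the combination $\epsilon\|w_r^{*-(3)}\|+\mu|a|\,\|w_r^{*-(2)}\|$ produces a truncation bound of order $\left(\frac{\mu}{\epsilon}\right)^{2}N^{-1}$ (the factor $\epsilon(\mu/\epsilon)^3/\theta_1\sim\mu^2/\epsilon$ is unbounded in this regime), in contrast to the left layer, where the weights $\mu^{-k}$ and $\theta_2=\rho/(2\mu)$ make the analogous factor $\epsilon/\mu^2\le\alpha/\rho$ bounded. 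Consequently your constant barrier $CN^{-1}$ fails the sign condition $\mathcal{L}^N\psi^{\pm}\le 0$, since $-b(x_i)CN^{-1}$ cannot dominate a residual of size $(\mu/\epsilon)^2N^{-1}$. The paper repairs this with a different barrier, $\psi^{\pm}(x_i)=C_1N^{-1}+C_2N^{-1}\left(\frac{\mu}{\epsilon}\right)\frac{x_i}{\theta_1}\pm(W_r^{*-}-w_r^{*-})(x_i)$: the linear term contributes through the convection part of $\mathcal{L}^N$ the extra negative quantity needed to absorb the parameter-dependent truncation error, while the final bound remains $CN^{-1}$ because $(\mu/\epsilon)/\theta_1=2/\alpha$ is bounded. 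Your proposal needs this (or an equivalent) modified barrier; note also that the problematic region is only $(d-\sigma_2,d)$ — on $(1-\sigma_4,1)$ the weights are $\mu^{-k}$ with $\theta_2=\rho/(2\mu)$, so your ``identical program'' with $\phi_4$ does go through there.
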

\begin{proof}
		In $(0, d-\sigma_{2}]$, for $1\le i\le \frac{3N}{8}$, the left layer component has the following bound from Theorem (\ref{wr})
	
	\begin{equation}
	\label{wR}
	\lvert w_{r}^{*-}(x_{i})\rvert \le Ce^{-\theta_{1}(d-x_{i})}\le Ce^{-\theta_{1}\sigma_{2}}\le CN^{-4}.
	\end{equation}
	Also from Lemma (\ref{discrete singular}), we see that $W_{r}^{*-}$ is increasing function. So
	\[ \lvert W_{r}^{*-}(x_{i})\rvert \le C\prod_{j=i+1}^{\frac{N}{2}}(1+\theta_{1}h_{j})^{-1}\le C \lvert \gamma_{r,\frac{3N}{8}}^{-}\rvert  \; \text{for }1\le i\le \frac{3N}{8}.
\]
	Now consider, 	
\begin{eqnarray*}
\lvert \gamma_{r,\frac{3N}{8}}^{-}\rvert  &=& \prod_{j=\frac{3N}{8}+1}^{\frac{N}{2}}(1+\theta_{1}h_{j})^{-1}\\
 \log(\gamma_{r,\frac{3N}{8}}^{-})&=&-\sum_{j=\frac{3N}{8}+1}^{\frac{N}{2}} \log (1+\theta_{1}h_{j})\\ 
 \text{As}~ \log(1+t^2)&\ge& t-\frac{t^2}{2}~ \text{for}~ t\ge 0,	\\
 \implies  \sum_{j=\frac{3N}{8}+1}^{\frac{N}{2}} \log (1+\theta_{1}h_{j})& \ge& \sum_{j=\frac{3N}{8}+1}^{\frac{N}{2}}\theta_{1}h_{j}-\sum_{k=\frac{3N}{8}+1}^{\frac{N}{2}}\bigg(\frac{\theta_{1} h_{j}}{2}\bigg)^2, \quad \bigg(\text{as} \sum_{j=\frac{3N}{8}+1}^{\frac{N}{2}}h_{j}=x_{\frac{N}{2}}\bigg).
	\end{eqnarray*}
	Now we calculate $\sum_{j=\frac{3N}{8}+1}^{\frac{N}{2}}\bigg(\frac{\theta_{1} h_{j}}{2}\bigg)^2$.\\
	For $\frac{3N}{8}+1 \le j\le \frac{N}{2},$\\
	\begin{eqnarray*}
	h_{j}&=&x_{j}-x_{j-1}=d-\frac{8}{\theta_{1}}\phi_{2}(\xi_{j})-\left(d-\frac{8}{\theta_{1}}\phi_{2}(\xi_{j-1})\right)\\
	&=&\frac{8}{\theta_{1}}(\phi_{2}(\xi_{j})-\phi_{2}(\xi_{j-1})) =\frac{8}{\theta_{1}}\int_{\xi_{j-1}}^{\xi_{j}}\phi_{2}'(\xi)d\xi\\
	\frac{\theta_1 h_j}{8}&=& \int_{\xi_{j-1}}^{\xi_{j}}\phi_{2}'(\xi)d\xi\\
	\bigg(\frac{\theta_{1}h_{j}}{8}\bigg)^2 &\le&(\xi_{j}-\xi_{j-1})\int_{\xi_{j-1}}^{\xi_{j}}\phi_{2}'(\xi)^{2}d\xi \quad \text{(by Holder's inequality)} \\
	\sum_{j=\frac{3N}{8}+1}^{\frac{N}{2}}\bigg(\frac{\theta_{1}h_{j}}{8}\bigg)^2&\le& N^{-1}\int_{\frac{3}{8}}^{\frac{1}{2}}\phi_{2}'(\xi)^{2}d\xi \le C \quad \text{(from Lemma \ref{assump})}.
	\end{eqnarray*}
	So
	\begin{align*}
	\sum_{j=\frac{3N}{8}+1}^{\frac{N}{2}} \log (1+\theta_{1}h_{j})& \ge 4\log N-C\\
%	\log(W_{r}^{*-}(x_{\frac{3N}{8}}))&=-\sum_{j=\frac{3N}{8}+1}^{\frac{N}{2}}(1+\theta_{1}h_{j})\\&\le -4\log N+C\\
\implies \lvert W_{r}^{*-}(x_{i})\rvert &\le C \gamma_{r,\frac{3N}{8}}^{-}\le CN^{-4}, \quad \forall \; 1\leq i \leq \frac{3N}{8}.
	\end{align*}
	Hence for all $x_{i}\in (0,d-\sigma_{2}]$, we have
	\[\lvert (W_{r}^{*-}-w_{r}^{*-})(x_{i})\lvert \le \lvert W_{r}^{*-}(x_{i})\rvert +\lvert w_{r}^{*-}(x_{i})\rvert \le CN^{-4}.\]
	For $\sqrt{\alpha} \mu\le\sqrt{\rho\epsilon}$, the derivative bounds for right layer component $w_{r}^{*-}$ in the inner region $(d-\sigma_{2}, d)$ is given by Theorem (\ref{boundswl}).
	Truncation error for right layer component is given by,
	\begin{align*}
	\lvert \mathcal{L}^{N}(W_{r}^{*-}-w_{r}^{*-})(x_{i})\rvert &\le C\bigg(\epsilon \int_{x_{i-1}}^{x_{i+1}}\lvert w_{r}^{*-(3)}(x_{i})\rvert dx+\mu \lvert a(x_{i})\rvert \int_{x_{i}}^{x_{i+1}}\lvert w_{r}^{*-(2)}(x_{i})\rvert dx\bigg)\\&\le C\bigg(\epsilon \int_{x_{i-1}}^{x_{i+1}}\lvert w_{r}^{*-(3)}(x_{i})\rvert dx+\mu \lvert a(x_{i})\rvert \int_{x_{i}}^{x_{i+1}}\lvert w_{r}^{*-(2)}\rvert (x_{i})dx\bigg)\\
	&\le \frac{C}{\sqrt{\epsilon}}\bigg[\int_{x_{i-1}}^{x_{i+1}}e^{-\theta_{1}(d-x)}dx+\int_{x_{i}}^{x_{i+1}}e^{-\theta_{1}(d-x)}dx\bigg] \\
	&\le \frac{C}{\sqrt{\epsilon}}\bigg[\int_{\xi_{i-1}}^{\xi_{i+1}}e^{-7\phi_{2}(\xi)}\lvert \psi_{2}'(\xi)\rvert d\xi+\int_{\xi_{i}}^{\xi_{i+1}}e^{-7\phi_{2}(\xi)}\lvert \psi_{2}'(\xi)\rvert d\xi\bigg] \\
	&\le CN^{-1}e^{\frac{-7}{8}\theta_{1}(d-x_{i})}\max\lvert \psi_{2}^{'}\rvert \\&\le CN^{-1}~~(\text{as}~\max\lvert \psi_{2}^{'}\rvert \le 8).
	\end{align*}
	By defining an appropriate barrier function and using the discrete minimum principle (in  \cite{RPS}), we can obtain the following bounds:
	$$\lvert (W_{r}^{*-}-w_{r}^{*-})(x_{i})\rvert \le CN^{-1}, \quad \frac{3N}{8} < i < \frac{N}{2}.$$
	For case $\sqrt{\alpha} \mu>\sqrt{\rho\epsilon}$, the derivative bounds for right layer component $w_{r}^{*-}$ for $\frac{3N}{8} < i < \frac{N}{2}$ are given by Theorem (\ref{wr}).
	Hence by using truncation error for the right layer component, we obtain,
	\begin{align*}
	\lvert \mathcal{L}^{N}(W_{r}^{*-}-w_{r}^{*-})(x_{i})\rvert &\le C\bigg(\epsilon \int_{x_{i-1}}^{x_{i+1}}\lvert w_{r}^{*-(3)}(x_{i})\rvert dx + \mu \lvert a(x_{i})\rvert  \int_{x_{i}}^{x_{i+1}} \lvert w_{r}^{*-(2)}(x_{i})\rvert dx\bigg)\\
	&\le C_{1}\bigg(\int_{x_{i-1}}^{x_{i+1}}\left(\frac{\mu}{\epsilon}\right)^{3}e^{-\theta_{1}(d-x)}\bigg) dx+C_{2}\bigg(\int_{x_{i}}^{x_{i+1}}\left(\frac{\mu}{\epsilon}\right)^{2}e^{-\theta_{1}(d-x)}dx\bigg)  \\
	&\le  C\frac{\mu}{\epsilon}^{2}\bigg(\int_{\xi_{i-1}}^{\xi_{i+1}}e^{-7\phi_{2}(\xi)}\lvert \psi_{2}'(\xi)\rvert d\xi+\int_{\xi_{i}}^{\xi_{i+1}}e^{-7\phi_{2}(\xi)}\lvert \psi_{2}'(\xi)\rvert d\xi\bigg) \\
	&\le \displaystyle C\frac{\mu}{\epsilon}^{2}e^{\frac{-7}{8}\theta_{1}(d-x_{i})}N^{-1} \max\lvert \psi_{2}'\rvert \\
	&\le C\frac{\mu}{\epsilon}^{2}N^{-1} \quad (\text{as}~\max\lvert \psi_{2}'\rvert \le 8).
	\end{align*}
	Choosing the barrier function for the layer component as
	\[\psi^{\pm}(x_{i})=C_{1}N^{-1}+C_{2}N^{-1}\left(\frac{\mu}{\epsilon}\right)\frac{x_{i}}{\theta_{1}}\pm(W_{r}^{*-}-w_{r}^{*-})(x_{i}).\]
For sufficiently large $C$, by the application of the discrete minimum principle (in \cite{RPS}) we obtain the following bounds:
	\begin{equation*}
	\lvert (W_{r}^{*-}-w_{r}^{*-})(x_{i})\rvert \le C_{1}N^{-1}+C_{2}N^{-1}\left(\frac{\mu}{\epsilon}\right)\frac{x_{i}}{\theta_{1}} \le CN^{-1}, \quad \text{for} \; \frac{3N}{8} < i < \frac{N}{2}.
	\end{equation*}	Hence the bound for the right layer component for $x_i \in (d-\sigma_2, d)$ is 
	\begin{equation}
	\label{wr-}
	\lvert (W_{r}^{*-}-w_{r}^{*-})(x_i)\rvert \le  CN^{-1},
	\end{equation}
	Similarly, we can prove the result for $\frac{N}{2}+1\le i\le N$,
	\begin{equation}
	\label{wr+}
	\lvert (W_{r}^{*+}-w_{r}^{*+})(x_i)\rvert \le  CN^{-1},
	\end{equation}
	Combining the results (\ref{wr-}) and (\ref{wr+}) the final answer is obtained.
\end{proof}
\begin{lemma} \label{discontnuity}
	Let $y(x)$ and $Y(x)$ be the solutions to the problems (\ref{twoparameter}) and (\ref{DE}), respectively. The error $e\left(x_{\frac{N}{2}}\right)$ estimated at the point of discontinuity $x_{\frac{N}{2}}=d$ satisfies the following estimate
	$$\displaystyle \lvert (D^{+}-D^{-})(Y(x_{\frac{N}{2}})-y(x_{\frac{N}{2}}))\rvert \le \left\{
	\begin{array}{ll}
	\displaystyle \frac{C}{\epsilon \theta_{1} }, & \hbox{ $\sqrt{\alpha}\mu \le \sqrt{\rho\epsilon}$, } \vspace{.5mm}\\
\displaystyle	\frac{C \mu^{2}}{\epsilon^{2} \theta_{1}}, & \hbox{ $\sqrt{\alpha}\mu > \sqrt{\rho\epsilon}$.}
	\end{array}
	\right. $$
\end{lemma}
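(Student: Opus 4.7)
The plan is to exploit two simple cancellations: the discrete scheme imposes $D^+Y(x_{N/2}) = D^-Y(x_{N/2})$, and the hypothesis $y \in \mathcal{C}^1(\Omega)$ gives $[y'](d) = 0$. Subtracting $y$ and inserting $\pm y'(d\pm)$ yields
\[
(D^+ - D^-)(Y - y)(x_{N/2}) \;=\; -\bigl(D^+y(x_{N/2}) - y'(d+)\bigr) + \bigl(D^-y(x_{N/2}) - y'(d-)\bigr),
\]
so the task reduces to estimating two one-sided truncation errors of $y$ at $d$, each living entirely on a smooth piece of the domain.

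First I would apply Taylor's theorem with remainder on $[d, x_{N/2+1}] \subset \Omega^+$ and on $[x_{N/2-1}, d] \subset \Omega^-$, obtaining
\[
|D^\pm y(x_{N/2}) - y'(d\pm)| \;\le\; \tfrac{1}{2}\,h_*\,\|y''\|_{I_\pm},
\]
with $h_* \in \{h_{N/2+1}, h_{N/2}\}$ and $I_\pm$ the corresponding short mesh interval adjoining $d$. From the graded-mesh step-size calculations appearing just after Lemma~\ref{assump}, both $h_{N/2}$ and $h_{N/2+1}$ satisfy $h_* \le C/\theta_1$ in either parameter regime.

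Next I would decompose $y = v^* + w_l^* + w_r^*$ and bound $\|y''\|$ on these short intervals. The regular part contributes only $Ch_*$, which is subdominant. On $I_+ = [d, x_{N/2+1}]$ the interior-layer piece $w_l^{*+''}$ dominates: Theorem~\ref{boundswl} gives $|w_l^{*+''}| \le C/\epsilon$ when $\sqrt{\alpha}\mu \le \sqrt{\rho\epsilon}$, and Theorem~\ref{wr} gives $|w_l^{*+''}| \le C(\mu/\epsilon)^2$ when $\sqrt{\alpha}\mu > \sqrt{\rho\epsilon}$, since the exponential factor $e^{-\theta_1(x-d)}$ is at most one. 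The companion term $w_r^{*+''}$ carries the factor $e^{-\theta_2(1-x)}$, which at $x$ close to $d$ is exponentially small under the standing assumption $\sqrt{\epsilon}<N^{-1}$ (respectively $\max\{\epsilon/\mu,\mu\}<N^{-1}$), and is therefore absorbed. A symmetric analysis on $I_- = [x_{N/2-1}, d]$ identifies $w_r^{*-''}$ as the dominant layer. Multiplying $h_* \le C/\theta_1$ by the appropriate second-derivative bound and adding the two sides yields $C/(\epsilon\,\theta_1)$ and $C\mu^2/(\epsilon^2\,\theta_1)$ respectively.

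The main obstacle I anticipate is the careful bookkeeping needed to discard the ``opposite'' layer on each side: one must confirm that the exponential smallness of $w_r^{*+''}$ near $d$ (and of $w_l^{*-''}$ near $d$) is genuinely sharper than the $1/\epsilon$ or $\mu^2/\epsilon^2$ arising from the ``own'' layer, under both parameter regimes and under the prevailing mesh restrictions. Once this is verified, the estimate reduces to a one-line Taylor remainder bound combined with the step-size estimate $h_* \le C/\theta_1$.
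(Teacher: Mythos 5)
Your proposal is correct and follows essentially the same route as the paper: the paper likewise uses $(D^{+}-D^{-})Y(x_{\frac{N}{2}})=0$ together with $y\in\mathcal{C}^1(\Omega)$ to reduce the quantity to the two one-sided truncation errors $\lvert(\tfrac{d}{dx}-D^{\pm})y(x_{\frac{N}{2}})\rvert\le C\bar{h}\lvert y''\rvert$ with $\bar{h}=\max\{h_{\frac{N}{2}},h_{\frac{N}{2}+1}\}\le C/\theta_{1}$, and then applies the bounds $\lvert y''\rvert\le C/\epsilon$ (respectively $C\mu^{2}/\epsilon^{2}$) near $d$. Your extra bookkeeping via the decomposition $y=v^{*}+w_{l}^{*}+w_{r}^{*}$ and the exponentially small ``opposite'' layer is a harmless elaboration of what the paper leaves implicit.
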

\begin{proof}
	Consider 
	$$\lvert (D^{+}-D^{-})(Y(x_{\frac{N}{2}})-y(x_{\frac{N}{2}}))\rvert \le\lvert (D^{+}-D^{-})y(x_{\frac{N}{2}}))\rvert $$
	Since $\lvert (D^{+}-D^{-})Y(x_{\frac{N}{2}}))\rvert =0$
	\begin{align*}
	\lvert (D^{+}-D^{-})(Y(x_{\frac{N}{2}})-y(x_{\frac{N}{2}}))\rvert &\le\bigg\lvert \bigg(\frac{d}{dx}-D^{+}\bigg)y(x_{\frac{N}{2}})\bigg\rvert +\bigg\lvert \bigg(\frac{d}{dx}-D^{-}\bigg)y(x_{\frac{N}{2}})\bigg\rvert \\&\le C_{1}h_{\frac{N}{2}+1}\lvert y''\rvert +C_{2}h_{\frac{N}{2}}\lvert y''\rvert \\&\le C\bar{h}\lvert y''\rvert \\&\le\left\{
	\begin{array}{ll}
	\displaystyle \frac{C\bar{h}}{\epsilon}, & \hbox{ $\sqrt{\alpha}\mu \le \sqrt{\rho\epsilon}$, ~$(\bar{h}=\max\{h_{\frac{N}{2}},h_{\frac{N}{2}+1}\})$} \vspace{.7mm} \\
\displaystyle	\frac{C\bar{h} \mu^{2}}{\epsilon^{2}}, & \hbox{ $\sqrt{\alpha}\mu > \sqrt{\rho\epsilon}$.}
	\end{array}
	\right. 
	\end{align*}
	Using the fact that $\bar{h} \leq C/\theta_{1}$ in the given domain gives the lemma.
\end{proof}
%%%%%%%%%%%%%%%%%%%
\begin{theorem}
	\label{main}
	Let us assume $\sqrt{\epsilon}<N^{-1}$ for $\sqrt{\alpha}\mu \le \sqrt{\rho\epsilon}$ and  $\displaystyle \max\bigg\{\frac{\epsilon}{\mu},\mu, \bigg\}<N^{-1}$ for $\sqrt{\alpha}\mu > \sqrt{\rho\epsilon}$. Let $y(x)$ and $Y(x)$ be respectively the solutions of the problems (\ref{twoparameter}) and (\ref{DE}) then,
	$$\displaystyle \|Y-y\|\le  CN^{-1}, $$
	where C is a constant independent of $\epsilon, \mu$ and discretization parameter $N$. 
\end{theorem}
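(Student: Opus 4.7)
The plan is to reduce the global error bound to three mesh regions: the two boundary nodes, the interior mesh points $x_i \in \Omega_N$, and the single point of discontinuity $x_{N/2} = d$. At every mesh point $x_i \in \bar\Omega_N$ the parallel decompositions of $y$ and $Y$ give
\[
e(x_i) \;=\; (V^{*} - v^{*})(x_i) \;+\; (W_{l}^{*} - w_{l}^{*})(x_i) \;+\; (W_{r}^{*} - w_{r}^{*})(x_i).
\]
For $x_i \in \Omega_N$ the triangle inequality together with Lemmas \ref{regular part}, \ref{left layer}, and \ref{right layer} immediately yields $|e(x_i)| \le CN^{-1}$, and at the endpoints $e(0)=e(1)=0$. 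Consequently, only $x_{N/2}=d$ demands separate treatment.

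At $x_{N/2}$, because $y\in\mathcal{C}^{1}(\Omega)$ and the scheme imposes $D^{+}Y(x_{N/2})=D^{-}Y(x_{N/2})$, one has $(D^{+}-D^{-})e(x_{N/2}) = -(D^{+}-D^{-})y(x_{N/2})$, the quantity controlled by Lemma \ref{discontnuity}. Inverting the identity $D^{+}e(x_{N/2}) - D^{-}e(x_{N/2}) = (D^{+}-D^{-})e(x_{N/2})$ for the value at $x_{N/2}$ yields
\[
|e(x_{N/2})| \;\le\; \max\{|e(x_{N/2-1})|,\,|e(x_{N/2+1})|\} \;+\; \frac{h_{N/2}\,h_{N/2+1}}{h_{N/2}+h_{N/2+1}}\,\bigl|(D^{+}-D^{-})e(x_{N/2})\bigr|.
\]
The first summand is already $O(N^{-1})$ from the interior estimate; the second calls on the Shishkin--Bakhvalov-specific fact that the first step of the graded block adjacent to $d$ satisfies $h_{N/2},\,h_{N/2+1}=O\bigl((N\theta_{1})^{-1}\bigr)$, obtained by Taylor-expanding $\log(1\pm 8/N)$ in the piecewise mesh formulas for $i=N/2\pm 1$. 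Inserting this together with Lemma \ref{discontnuity} collapses the second summand to $O(N^{-1})$ in both parameter regimes, which closes the proof.

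The main obstacle will be precisely this mesh-size bookkeeping at $x_{N/2}$: the derivative-jump estimate from Lemma \ref{discontnuity} is of size $C/(\epsilon\theta_{1})$ or $C\mu^{2}/(\epsilon^{2}\theta_{1})$, both unbounded in the singular-perturbation limit. Uniform first-order convergence emerges only because $\epsilon\theta_{1}^{2}=\rho\alpha/4$ is $O(1)$ when $\sqrt{\alpha}\mu\le\sqrt{\rho\epsilon}$ and $\mu^{2}/(\epsilon^{2}\theta_{1}^{2})=4/\alpha^{2}$ is $O(1)$ when $\sqrt{\alpha}\mu>\sqrt{\rho\epsilon}$, so multiplying the jump by the fine-mesh factor $(N\theta_{1})^{-1}$ yields exactly the desired $O(N^{-1})$ correction. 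An equivalent route applies the discrete minimum principle of Lemma \ref{minimun} to barriers $\Psi^{\pm}(x_{i})=CN^{-1}+C'\eta(x_{i})\pm e(x_{i})$, where $\eta$ is a nonnegative mesh function supported on $\{x_{N/2-1},x_{N/2},x_{N/2+1}\}$ with $\eta(x_{N/2})=h_{N/2}h_{N/2+1}/(h_{N/2}+h_{N/2+1})$; then $(D^{+}-D^{-})\eta(x_{N/2})=-1$, and the same mesh bookkeeping shows $C'\eta(x_{N/2})=O(N^{-1})$, after which the minimum principle concludes $\|Y-y\|\le CN^{-1}$.
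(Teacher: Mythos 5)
Your proposal is correct, and away from $d$ it is the same argument as the paper's: both of you combine Lemmas \ref{regular part}, \ref{left layer} and \ref{right layer} (via the discrete decomposition $Y=V^*+W_l^*+W_r^*$) to get $\lvert e(x_i)\rvert \le CN^{-1}$ for all $x_i\in\Omega_N$, with $e(0)=e(1)=0$. Where you genuinely diverge is at the single remaining node $x_{N/2}=d$. The paper handles it by building a piecewise-linear (hat-shaped) discrete barrier $\psi(x_i)=CN^{-1}+\frac{C_1\sigma}{\epsilon N(\log N)^2}\bigl(x_i-(d-\sigma_2)\bigr)$ (mirrored on the right, with an extra $\mu^2/\epsilon^2$ in the second regime) on the whole interval $(d-\sigma_2,d+\sigma_3)$ and invoking the discrete minimum principle with the jump condition, so the $O(N^{-1})$ bound at $d$ is read off from the barrier's value $\sigma^2/(\epsilon N(\log N)^2)$. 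You instead solve the transmission relation exactly for $e(x_{N/2})$, writing it as a convex combination of $e(x_{N/2\pm1})$ minus $\frac{h_{N/2}h_{N/2+1}}{h_{N/2}+h_{N/2+1}}(D^+-D^-)e(x_{N/2})$, and then use the genuinely local fact (which you correctly extract from the mesh formulas) that the Bakhvalov grading makes the steps adjacent to $d$ of size $O\bigl((N\theta_1)^{-1}\bigr)$; combined with Lemma \ref{discontnuity} and the identities $\epsilon\theta_1^2=\rho\alpha/4$, resp. $\mu^2/(\epsilon\theta_1)^2=4/\alpha^2$, this gives $CN^{-1}$ at $d$ in both regimes. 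Your route is more elementary and more transparent: it is purely algebraic at one node, needs no verification of $\mathcal{L}^N\psi\le 0$ across the interior-layer region and no $\sigma$--$\log N$ bookkeeping, the factor $\frac{h_{N/2}h_{N/2+1}}{h_{N/2}+h_{N/2+1}}\le\min\{h_{N/2},h_{N/2+1}\}\le C/(N\theta_1)$ supplying exactly the $N^{-1}$ that the paper extracts from its barrier; the price is that you must compute the first graded step next to $d$, whereas the paper's argument only ever uses the coarse bound $\bar h\le C/\theta_1$ and yields the estimate simultaneously on all of $(d-\sigma_2,d+\sigma_3)$. One caution on the ``equivalent'' barrier variant you sketch at the end: with $\eta$ supported only on the three nodes around $d$, the term $\epsilon\delta^2\eta$ is positive and large at $x_{N/2\pm1}$, so $\mathcal{L}^N\Psi^{\pm}\le 0$ fails there; that version must be run on the three-point subdomain with $x_{N/2\pm1}$ treated as boundary nodes (where $\lvert e\rvert\le CN^{-1}$ is already known), at which point it collapses back to your explicit identity, so the direct computation is the cleaner statement of your argument.
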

\begin{proof} For $i=1,2,\ldots,N/2-1,N/2+1, \ldots, N-1$, from Lemma (\ref{regular part}), Lemma (\ref{left layer}), and Lemma (\ref{right layer}), we have that
	 	$$\|Y-y\|_{\Omega_{N}^- \cup \Omega_{N}^+}\le  CN^{-1}, $$
	 Let $\sqrt{\alpha}\mu \le \sqrt{\rho\epsilon},$ to find error at the point of discontinuity $x_{i}=x_{\frac{N}{2}}$, consider the discrete barrier function $\phi_{1}(x_{i})=\psi_{1}(x_{i})\pm e(x_{i})$ defined in the interval $(d-\sigma_{2},d+\sigma_{3})$ where
	 $$\psi_{1}(x_{i})=CN^{-1}+\frac{C_{1}\sigma}{\epsilon N (\log N)^{2}}\left\{
	 \begin{array}{ll}
	 	\displaystyle x_{i}-(d-\sigma_{2}), & \hbox{ $x_{i}\in \Omega_{N}\cap(d-\sigma_{2}, d] $, }\\
	 	d+\sigma_{3}-x_{i}, & \hbox{ $x_{i}\in\Omega_{N}\cap[d, d+\sigma_{3})$}
	 \end{array}
	 \right. $$
	 and $\displaystyle \sigma=\sigma_{2}=\sigma_{3}=  \frac{4}{\theta_{1}} \log N.$
	 
	 We have $\phi_{1}(d-\sigma_{2})$ and $\phi_{1}(d+\sigma_{3})$ are non-negative. And
	 $\mathcal{L}^{N}\phi_{1}(x_{i})\le 0,~~x_{i}\in( d-\sigma_2, d+\sigma_3),~~\lvert (D^{+}-D^{-})\phi_{1}(x_{\frac{N}{2}}))\rvert \le0.$\\
	Hence by applying discrete minimum priciple we get $\phi_{1}(x_{i})\ge0.$\\
	Therefore, for $x_{i}\in (d-\sigma_{2},d+\sigma_{3})$
	\begin{equation}
	\label{Yd}
	\lvert (Y-y)(x_{i})\rvert \le C_{1}N^{-1}+\frac{C_{2}\sigma^{2}}{\epsilon N(\log N)^{2}}\le CN^{-1}.
	\end{equation}
	In second case $\sqrt{\alpha}\mu > \sqrt{\rho\epsilon}$, consider the discrete barrier function $\phi_{2}(x_{i})=\psi_{2}(x_{i})\pm e(x_{i})$ defined in the interval $(d-\sigma_{2},d+\sigma_{3})$ where
	$$\psi_{2}(x_{i})=CN^{-1}+\frac{C_{1}\sigma\mu^{2}}{\epsilon^{2} N (\log N)^{2}}\left\{
	\begin{array}{ll}
	\displaystyle x_{i}-(d-\sigma_{2}), & \hbox{ $x_{i}\in \Omega_{N}\cap(d-\sigma_{2}, d] $, }\\
	d+\sigma_{3}-x_{i}, & \hbox{ $x_{i}\in\Omega_{N}\cap[d, d+\sigma_{3})$}
	\end{array}
	\right. $$
	where $\displaystyle \sigma=\sigma_{2}= \frac{4}{\theta_{1}} \log N.$
	We have $\phi_{2}(d-\sigma_{2})$ and $\phi_{2}(d+\sigma_{3})$ are non negative and
	 $\mathcal{L}^{N}\phi_{2}(x_{i})\le 0,~~x_{i}\in(d-\sigma_{2},d+\sigma_{3})$,~ and $\lvert (D^{+}-D^{-})\phi_{2}(x_{\frac{N}{2}}))\rvert \le0.$\\
	Hence by applying discrete minimum principle, we get $\phi_{2}(x_{i})\ge0.$
	Therefore, for $x_{i}\in (d-\sigma_{2},d+\sigma_{3})$
	\begin{equation}
	\label{Yd1}
	\lvert (Y-y)(x_{i})\rvert \le C_{1}N^{-1}+\frac{C_{2}\sigma^{2}\mu^{2}}{\epsilon^{2} N(\log N)^{2}}\le CN^{-1}.
	\end{equation}
	By combining the result (\ref{Yd}) and (\ref{Yd1}) we obtain the desired result.
\end{proof}
\section{Numerical results}\label{sec6}
In this section, we have considered some singularly perturbed two-parameter boundary value problems with discontinuous convection coefficient and source term as test problems. The proposed scheme is used to solve these problems numerically.
\begin{Example}
	\label{ex-a}
	$$\epsilon y''(x)+\mu a(x)y'(x)-y(x)=f(x)~~ ~~x\in(0,.5)\cup(0.5,1),$$
	$$	y(0)=2,~ y(1)=1,$$\nonumber\
	with \\
	$$a(x)=\left\{
	\begin{array}{ll}
	\displaystyle -2, & \hbox{ $0\le x\le 0.5$, }\\
	2, & \hbox{ $0.5<x\le1$,}
	\end{array}
	\right. and~~
	f(x)=\left\{
	\begin{array}{ll}
	\displaystyle -1, & \hbox{ $0\le x\le 0.5$, }\\
	1, & \hbox{ $0.5<x\le1$.}
	\end{array}
	\right.$$\\
\end{Example}
\begin{Example}
\label{ex-b}
	$$\epsilon y''(x)+\mu a(x)y'(x)-2y(x)=f(x)~~ ~~x\in(0,.5)\cup(0.5,1),$$
	$$	y(0)=0,~ y(1)=-1,$$\nonumber\
	\text{with} \\
	$$a(x)=\left\{
	\begin{array}{ll}
	\displaystyle -(1+x), & \hbox{ $0\le x\le 0.5$, }\\
	(2+x^{2}), & \hbox{ $0.5<x\le1$,}
	\end{array}
	\right. and~~
	f(x)=\left\{
	\begin{array}{ll}
	\displaystyle -(14x+1), & \hbox{ $0\le x\le 0.5$, }\\
	(2-2x), & \hbox{ $0.5<x\le1$.}
	\end{array}
	\right.$$\\
\end{Example}
Since the exact solution for  Example \ref{ex-a} and  Example \ref{ex-b} is unknown, the maximum point-wise error and rate of convergence are computed using the double mesh principle (see \cite{DM}, page 199). The double mesh difference is defined by
$$E^{N}= \max\limits_{x\in\bar{\Omega}^{N}}\lvert Y^{N}(x_{i})-Y^{2N}(x_{i})\rvert $$
where $Y^{N}(x_{i})$ and $Y^{2N}(x_{i})$ represent the numerical solutions determined using $N$ and $2N$ mesh points respectively.
The numerical rate of convergence is given by
$$R^{N}=\frac{\log(E^{N})-\log(E^{2N})}{\log2}.$$

Table \ref{eg11} shows the results for various values of $\mu$ and for $\epsilon= 10^{-6}$ for Example \ref{ex-a}. The order of convergence obtained approaches one as we increase the number of mesh points. In Table \ref{eg12} the maximum point-wise error and order of convergence are given for Example \ref{ex-a} for varying values of $\epsilon$ and keeping the value of $\mu$ fixed.

Figures \ref{fig11} and \ref{fig12} represent the numerical solution and maximum point-wise error for Example \ref{ex-a} for the case $\sqrt{\alpha}\mu \le \sqrt{\rho\epsilon} $ respectively with $\epsilon=10^{-8}, \mu = 10^{-6} $ and $N=256$.
The numerical solution and maximum point-wise error for the case $\sqrt{\alpha}\mu > \sqrt{\rho\epsilon}$ for Example \ref{ex-a} for $N=256$ is given in Figures \ref{fig13} and \ref{fig14} respectively with $\epsilon=10^{-12}, \mu = 10^{-4}$ and $N=256$.

 In Tables \ref{eg21} and \ref{eg22}, maximum point-wise error and order of convergence are tabulated for Example \ref{ex-b}. From these tables, we observe that the numerical order of convergence is consistent with the theoretical estimates presented in this paper.
 
 For Example \ref{ex-b}, Figures \ref{fig21} and \ref{fig22} gives the numerical solution and maximum point-wise error for the case $\sqrt{\alpha}\mu \le \sqrt{\rho\epsilon}$ respectively with $\epsilon=10^{-8}, \mu = 10^{-6} $ and $N=256$. The Figures  \ref{fig23} and \ref{fig24} show the numerical solution and maximum point-wise error for the case $\sqrt{\alpha}\mu > \sqrt{\rho\epsilon}$ respectively with $\epsilon=10^{-12}, \mu = 10^{-4}$ and $N=256$. From these figures, we observe that the maximum error is occurring at the point of discontinuity.

With the use of the Shishkin-Bakhvalov mesh, we are able to improve the order of convergence to one, unlike the Shishkin mesh, where the order of convergence is deteriorated due to the presence of a logarithmic factor. In Table \ref{comp}, we have compared the order of convergence obtained for the numerical method presented here on the Shishkin-Bakhvalov mesh and Shishkin mesh for Example \ref{ex-a}.
%\section{Tables}\label{sec7}
\begin{table}[h]
\begin{center}
\begin{minipage}{174pt}
\caption{Maximum point-wise error $E^{N}$ and approximate orders of convergence $R^{N}$ for Example \ref{ex-a} when $\epsilon=10^{-6}.$}\label{eg11}%
\begin{tabular}{@{}llllll@{}}
\hline
\multirow{2}{*}{$\mu$ }& \multicolumn{5}{c}{Number of mesh points N}\\
\cline{2-6}
& 64  & 128 & 256 &512 & 1024\\
\hline
		$10^{-4}$ & 3.3161e-01&	2.1205e-01&	1.2184e-01&	6.5947e-02&	3.4499e-02 \\ 
		Order & 0.64507&	0.79940&	0.88563&	0.93474& \\

		$10^{-5}$ & 3.0199e-01&1.8183e-01&	9.9546e-02&	5.2296e-02&	2.6915e-02 \\ 
		
		Order & 0.73190&	0.86918&	0.92864&	0.95830& \\
		$10^{-6}$ & 2.9894e-01&	1.7875e-01&	9.7305e-02&	5.0937e-02&	2.6164e-02 \\ 
	
		Order & 0.74189&	0.87739&	0.93378&	0.96113& \\
		$10^{-7}$ & 2.9863e-01&	1.7844e-01&	9.7080e-02&	5.0801e-02&	2.6089e-02 \\ 
		
		Order & 0.74290&	0.87823&	0.93430&	0.96142& \\
		
		$10^{-8}$ &2.9860e-01&	1.7841e-01&	9.7058e-02	&5.0788e-02	&2.6081e-02\\
		
		Order & 0.74300&	0.87831&	0.93435&	0.96145& \\
		$10^{-9}$ &2.9860e-01&1.7841e-01&9.7056e-02&	5.0787e-02&	2.6081e-02 \\ 
		
		Order & 0.74301&	0.87832&	0.93436&	0.96145& \\
		
		$10^{-10}$ & 2.9860e-01	&1.7841e-01	&9.7056e-02	&5.0786e-02&2.6081e-02\\ 
	
		Order  & 0.74301&	0.87832&	0.93436&	0.96145& \\
		$10^{-11}$ &2.9860e-01&	1.7841e-01&	9.7056e-02&	5.0786e-02&	2.6081e-02\\ 
		
		Order  &0.74301&	0.87832&	0.93436&	0.96145&\\
		
		$10^{-12}$ &2.9860e-01&	1.7841e-01&	9.7056e-02&	5.0786e-02&	2.6081e-02\\ 
	
		Order  &0.74301&	0.87832&	0.93436&	0.96145&\\
		$10^{-13}$ &2.9860e-01&	1.7841e-01&	9.7056e-02&	5.0786e-02&	2.6081e-02\\ 
		
		Order  &0.74301&	0.87832&	0.93436&	0.96145&\\
		$10^{-14}$ &2.9860e-01&	1.7841e-01&	9.7056e-02&	5.0786e-02&	2.6081e-02\\ 
		
		Order  &0.74301&	0.87832&	0.93436&	0.96145&\\
		$10^{-15}$ & 2.9894e-01&	1.7875e-01&	9.7305e-02&	5.0937e-02&	2.6164e-02 \\ 
		
		Order & 0.74189&	0.87739&	0.93378&	0.96113& \\
		$10^{-16}$ &2.9860e-01&	1.7841e-01&	9.7056e-02&	5.0786e-02&	2.6081e-02\\ 
		
		Order  &0.74301&	0.87832&	0.93436&	0.96145&\\
		$10^{-17}$ &2.9860e-01&	1.7841e-01&	9.7056e-02&	5.0786e-02&	2.6081e-02\\ 
		
		Order  &0.74301&	0.87832&	0.93436&	0.96145&\\
\hline
\end{tabular}
\end{minipage}
\end{center}
\end{table}
\begin{table}[h]
\begin{center}
\begin{minipage}{174pt}
\caption{Maximum point-wise error $E^{N}$ and approximate orders of convergence $R^{N}$ for Example \ref{ex-a} when $\mu=10^{-4}.$}\label{eg12}%
\begin{tabular}{@{}llllll@{}}
\hline 
\multirow{2}{*}{$\epsilon$ }& \multicolumn{5}{c}{Number of mesh points N}\\
\cline{2-6}
& 64  & 128 & 256 &512 & 1024\\
\hline
		$10^{-8}$ &	4.3793e-01&3.0942e-01&1.9296e-01&1.0991e-01&	5.9142e-02 \\ 
		
		Order &	0.50113&	0.68126&	0.81198&	0.89406&\\
		$10^{-9}$ &	4.4915e-01&	3.0223e-01&	1.8274e-01&	1.0226e-01&	5.4505e-02 \\ 
		
		Order &	0.57151&	0.72586&	0.83750&	0.90783&\\

		$10^{-10}$	&4.5302e-01	&3.0188e-01&1.8160e-01&	1.0136e-01	&5.3951e-02 \\
		
		Order &	0.58557&	0.73318&	0.84130&	0.90978&\\
		$10^{-11}$	&4.5349e-01&3.0186&	1.8149e-01&	1.0127e-01&	5.3895e-02 \\
		
		Order &0.58716&	0.73398&	0.84170&	0.90998&\\
		
		$10^{-12}$&4.5353e-01& 3.0186e-01&	1.8148e-01	&1.0126e-01&5.3889e-02\\ 
		
		Order &	0.58732&	0.73405&	0.84174&	0.91000& \\
		$10^{-13}$&4.5354e-01&	3.0186e-01&	1.8148e-01&	1.0126&	5.3888e-02\\ 
		
		Order &	0.58734&	0.73406&	0.84175&	0.91001& \\
		
		$10^{-14}$&4.5354e-01&3.0186e-01&	1.8147e-01&	1.0126e-01&	5.3863e-02\\
		
		Order 	&0.58733&	0.73409&	0.84170&	0.91073& \\
		$10^{-15}$&4.5355e-01&	3.0182e-01&	1.8147e-01&	1.0114e-01&	5.3746e-02\\
		
		Order 	&0.58753&	0.73396&	0.84334&	0.91218& \\
		$10^{-16}$&4.5347e-01&3.0154e-01&1.8095e-01&	1.0082e-01&	5.0923e-02\\
		
		Order 	&0.58862&	0.73672&	0.84375&	0.98551& \\
		$10^{-17}$&4.5270e-01&	2.9936e-01&	1.7931e-01&	9.3085e-01&	2.7602e-02\\
		
		Order 	&0.59666&	0.73941&	0.94586&	1.7537& \\
	
\hline
\end{tabular}
\end{minipage}
\end{center}
\end{table}
\begin{table}[h]
\begin{center}
\begin{minipage}{174pt}
\caption{Maximum point-wise error $E^{N}$ and approximate orders of convergence $R^{N}$ for Example \ref{ex-b} when $\epsilon=10^{-6}.$}\label{eg21}%
\begin{tabular}{@{}llllll@{}}
\hline
\multirow{2}{*}{$\mu$ }& \multicolumn{5}{c}{Number of mesh points N}\\
\cline{2-6}
& 64  & 128 & 256 &512 & 1024\\
\hline
			$10^{-4}$ &5.3686e-01&3.4621e-01&1.2697e-01&	4.6968e-02&	2.4601e-02\\ 
		
		Order &0.63289&	0.12697&	1.4470&	1.4348&\\
		$10^{-5}$ &5.5069e-01&	3.7896e-01&	1.5849e-01&	4.7313e-02&	1.0219e-02\\ 
		
		Order &0.53919&	1.2576&	1.7440&	2.2109&\\
		$10^{-6}$ &5.5215e-01&3.8238e-01&1.6172e-01&	4.9611e-02&1.1616e-02\\ 
		
		Order &0.53006&	1.2414&	1.7047&	2.0945&\\
		$10^{-7}$ &5.5230e-01&	3.8272e-01&	1.6204e-01&	4.9840e-02&	1.1755e-02\\ 
		
		Order &0.52915&	1.2398&	1.7010&	2.0839&\\
		
		$10^{-8}$	&5.5231e-01&3.8275e-01&1.6207e-01&	4.9863e-02&	1.1769e-02\\
		
		Order &	0.5290&	1.2397&	1.7006&	2.0829&\\
		$10^{-9}$	&5.5232e-01&3.8276e-01&	1.6208e-01&	4.9866e-02&	1.1771e-02\\
		
		Order &	0.52905&	1.2397&	1.7005&	2.0827&\\
		
		$10^{-10}$&5.5232e-01	&3.8276e-01	&1.6208e-01	&4.9866e-02	&1.1771e-02 \\ 
	
		Order &	0.52905&	1.2397&	1.7005&	2.0827& \\
		$10^{-11}$&5.5232e-01&3.8276e-01&1.6208e-01&4.9866e-02&	1.1771e-02 \\ 
		
		Order &	0.52905&	1.2397&	1.7005&	2.0828& \\
		
		$10^{-12}$&5.5232e-01&3.8276e-01&1.6208e-01&4.9866e-02&	1.1771e-02 \\ 
		
		Order &	0.52905&	1.2397&	1.7005&	2.0828& \\
		$10^{-13}$&5.5232e-01&3.8276e-01&1.6208e-01&4.9866e-02&	1.1771e-02 \\ 
		
		Order &	0.52905&	1.2397&	1.7005&	2.0828& \\
		$10^{-14}$&5.5232e-01&3.8276e-01&1.6208e-01&4.9866e-02&	1.1771e-02 \\ 
		
		Order &	0.52905&	1.2397&	1.7005&	2.0828& \\
		$10^{-15}$&5.5232e-01&3.8276e-01&1.6208e-01&4.9866e-02&	1.1771e-02 \\ 
		
		Order &	0.52905&	1.2397&	1.7005&	2.0828& \\
		$10^{-16}$&5.5232e-01&3.8276e-01&1.6208e-01&4.9866e-02&	1.1771e-02 \\ 
		
		Order &	0.52905&	1.2397&	1.7005&	2.0828& \\
		$10^{-17}$&5.5232e-01&3.8276e-01&1.6208e-01&4.9866e-02&	1.1771e-02 \\ 
		
		Order &	0.52905&	1.2397&	1.7005&	2.0828& \\
	
\hline
\end{tabular}
\end{minipage}
\end{center}
\end{table}
\begin{table}[h]
\begin{center}
\begin{minipage}{174pt}
\caption{Maximum point-wise error $E^{N}$ and approximate orders of convergence $R^{N}$ for Example \ref{ex-b} when $\mu=10^{-4}.$}\label{eg22}%
\begin{tabular}{@{}llllll@{}}
\hline
\multirow{2}{*}{$\epsilon$ }& \multicolumn{5}{c}{Number of mesh points N}\\
\cline{2-6}
& 64  & 128 & 256 &512 & 1024\\
\hline
	$10^{-8}$&5.9397e-01&	4.4115e-01&	2.8197e-01&	1.6259e-01&	8.8048e-02\\ 
		
		Order  &	0.42911&	0.64574&	0.79429&	0.88487&\\
		$10^{-9}$&7.6741e-01&	5.0315e-01&	2.9775e-01&	1.6443e-01&	8.7022e-02\\ 
		
		Order  &0.60902&	0.75685&	0.85662&	0.91804&\\
		
		$10^{-10}$&8.0508e-01&	5.1239e-01&	2.9860e-01&	1.6361e-01&8.6241e-02\\
		
		Order &0.65187&	0.77903&	0.86797&	0.92381&\\
		$10^{-11}$&8.0927e-01&	5.1325e-01&	2.9859e-01&	1.6346e-01&	8.6127e-02\\
		
		Order &0.65694&	0.78150&	0.86920&	0.92443&\\
		
		$10^{-12}$&8.0970e-01&	5.1334e-01&	2.9858e-01&	1.6344e-01&	8.6114e-02 \\ 
		
		Order&	0.65746&	0.78176&	0.86932&	0.92450& \\
		$10^{-13}$&8.0974e-01&	5.1335e-01&	2.9858e-01&	1.6344e-01&	8.6106e-02 \\ 
		
		Order&0.65750&	0.78179&	0.86937&	0.92459& \\
		
		$10^{-14}$&8.0974e-01&	5.1336e-01&	2.9858e-01&	1.6345e-01&	8.6022e-02 \\ 
		
		Order &	0.65749&	0.78182&	0.86929&	0.92607& \\
		$10^{-15}$&8.0976e-01&	5.1344e-01&	2.9857e-01&	1.6328e-01&	8.6538e-02\\ 
		
		Order &0.65729&	0.78210&	0.87070&	0.91597& \\
		$10^{-16}$&8.0948e-01&	5.1325e-01&	2.9734e-01&	1.5930e-01&	9.4156e-02 \\ 
		
		Order &0.65734&	0.78751&	0.90032&	0.75868& \\
		$10^{-17}$&8.0711e-01&	5.1523e-01&	2.8102e-01&	1.4310e-01&	4.9626e-02 \\ 
		
		Order &0.64754&	0.87454&	0.97363&	1.5278& \\
\hline
\end{tabular}
\end{minipage}
\end{center}
\end{table}
\begin{table}[h]
\begin{center}
\begin{minipage}{174pt}
\caption{Comparison of order of convergence using Shishkin mesh and Shishkin-Bakvalov mesh of Example \ref{ex-a} for $\epsilon=10^{-8}.$}\label{comp}%
\begin{tabular}{@{}llllll@{}}
\hline
\multirow{2}{*}{$\mu$ }&\multirow{2}{*}{Mesh }& \multicolumn{4}{c}{Number of mesh points N}\\
\cline{3-6}
& & 64  & 128 & 256 &512 \\
\hline
	\multirow{2}{*}{$10^{-5}$}&S-mesh&0.23087&	0.40876&	0.57128&	0.68814\\

		&S-B mesh&0.64591&	0.79977&	0.88581&	0.93482	\\
	
		\multirow{2}{*}{$10^{-6}$}&S-mesh&0.27379&	0.46997&	0.63313&	0.73471\\
		&S-B mesh&0.73267&	0.86950&	0.92879&	0.95837	\\
		
		\multirow{2}{*}{$10^{-7}$}&S-mesh&0.27851&	0.47689&	0.64024&	0.74010\\
	
		&S-B mesh&0.74265&	0.87771&	0.93392&	0.96120	\\
	
		\multirow{2}{*}{$10^{-8}$}&S-mesh&	0.27899&	0.47759&	0.64096&	0.74064\\
	
		&S-B mesh&	0.74366&0.87854&	0.93444	&0.96149\\
	
		\multirow{2}{*}{$10^{-9}$}&S-mesh&0.27904&	0.47766&	0.64103&	0.74070\\
	
		&S-B mesh&0.74376&	0.87863&	0.93450&	0.96152\\
	
		\multirow{2}{*}{$10^{-10}$}&S-mesh&	0.27904&	0.47767&	0.64104&	0.74070\\
	
		&S-B mesh&0.74377&	0.87864&	0.93450&	0.96152\\
	
		\multirow{2}{*}{$10^{-11}$}&S-mesh&	0.27904&	0.47767&	0.64104&	0.74070\\
		
		&S-B mesh&	0.74377&	0.87864&	0.93450	&0.96152\\
		
		\multirow{2}{*}{$10^{-12}$}&S-mesh&	0.27904&	0.47767&	0.64104&	0.74070\\
		
		&S-B mesh&	0.74377&	0.87864&	0.93450	&0.96152\\
	
		\multirow{2}{*}{$10^{-13}$}&S-mesh&	0.27904&	0.47767&	0.64104&	0.74070\\
		
		&S-B mesh&	0.74377&	0.87864&	0.93450	&0.96152\\
		
		\multirow{2}{*}{$10^{-14}$}&S-mesh&	0.27904&	0.47767&	0.64104&	0.74070\\
		
		&S-B mesh&	0.74377&	0.87864&	0.93450	&0.96152\\
\hline
\end{tabular}
\end{minipage}
\end{center}
\end{table}

%\section{Figures}\label{sec8}
\begin{figure}[h]%
\centering
\includegraphics[width=0.9\textwidth]{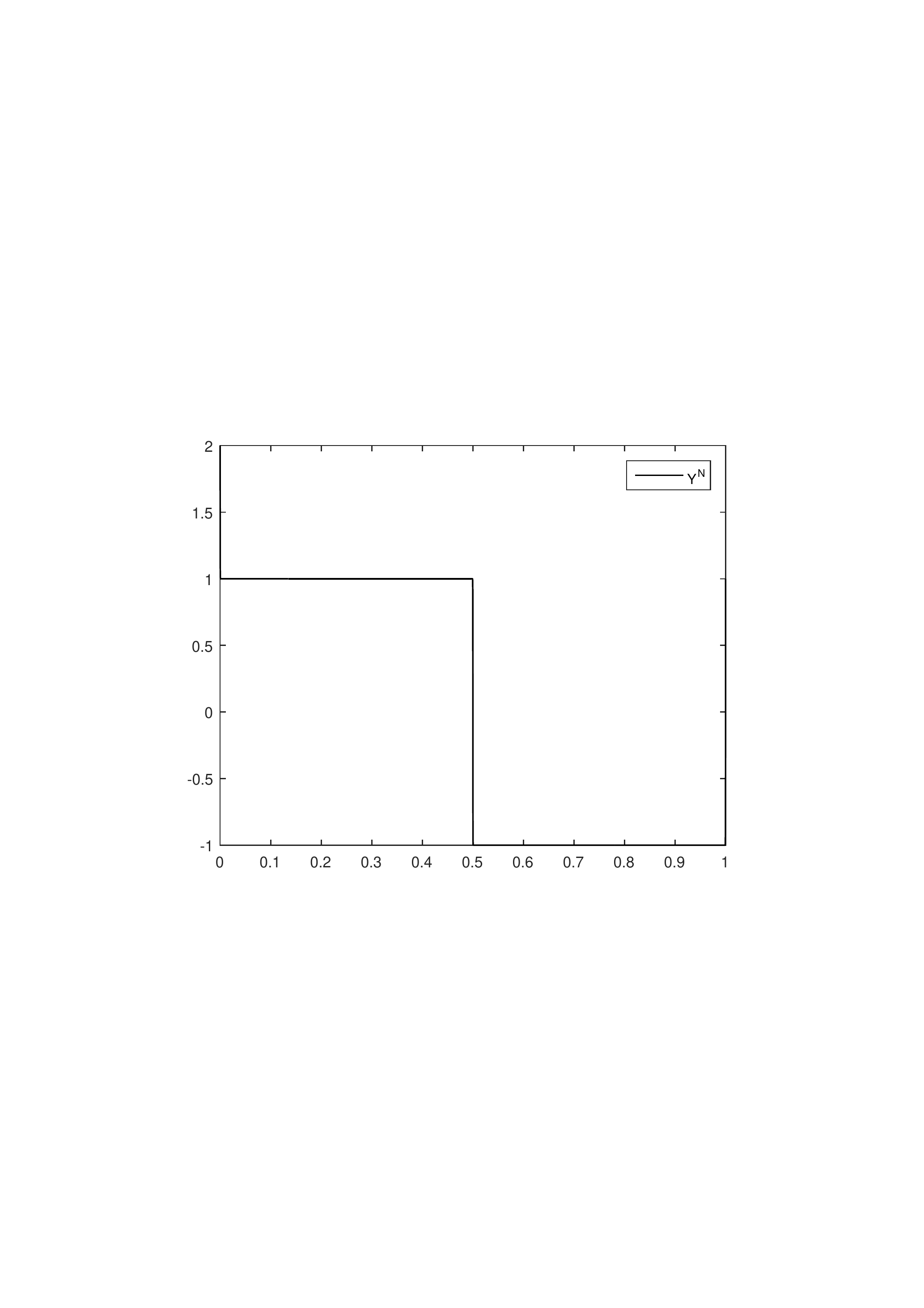}
\caption{Plot of numerical solution for $\epsilon=10^{-8}, \mu=10^{-6}$ when $N=256$ for Example \ref{ex-a}.}\label{fig11}
\end{figure}
\begin{figure}[h]%
\centering
\includegraphics[width=0.9\textwidth]{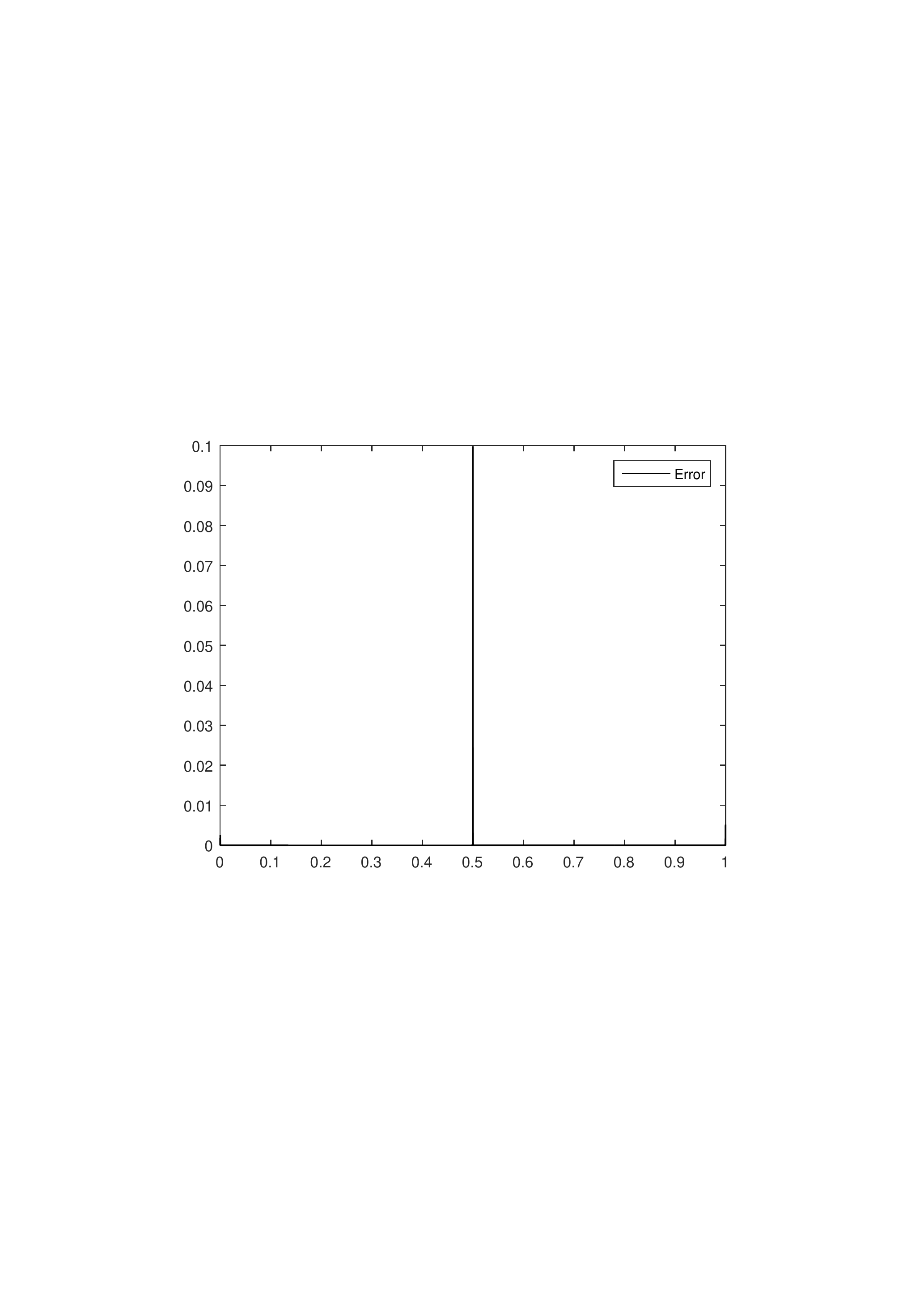}
\caption{Plot of errors for $\epsilon=10^{-8}, \mu=10^{-6}$ when $N=256$ for Example \ref{ex-a}.}\label{fig12}
\end{figure}
\begin{figure}[h]%
\centering
\includegraphics[width=0.9\textwidth]{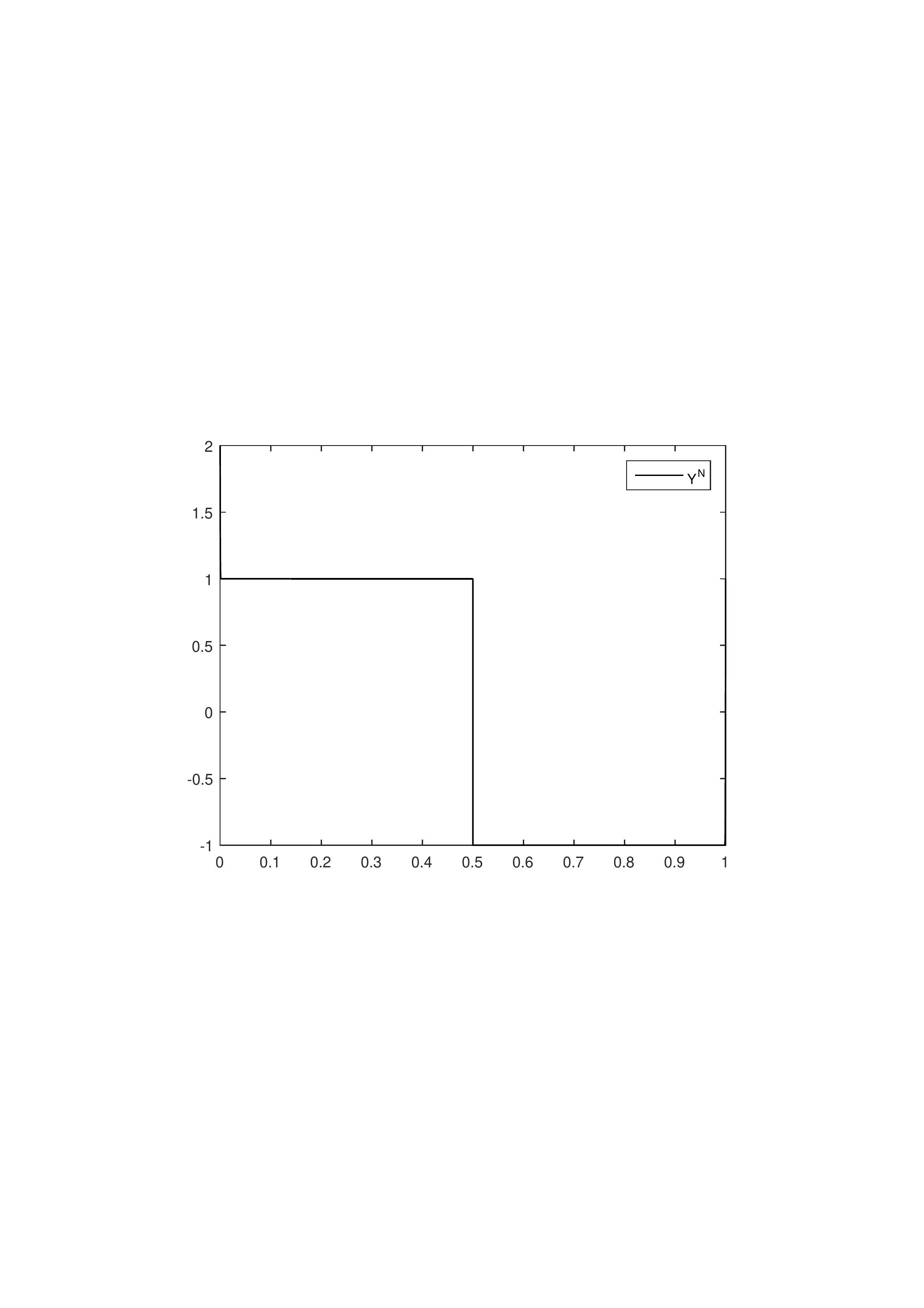}
\caption{Plot of numerical solution for $\epsilon=10^{-12}, \mu=10^{-4}$ when $N=256$ for Example \ref{ex-a}.}\label{fig13}
\end{figure}
\begin{figure}[h]%
\centering
\includegraphics[width=0.9\textwidth]{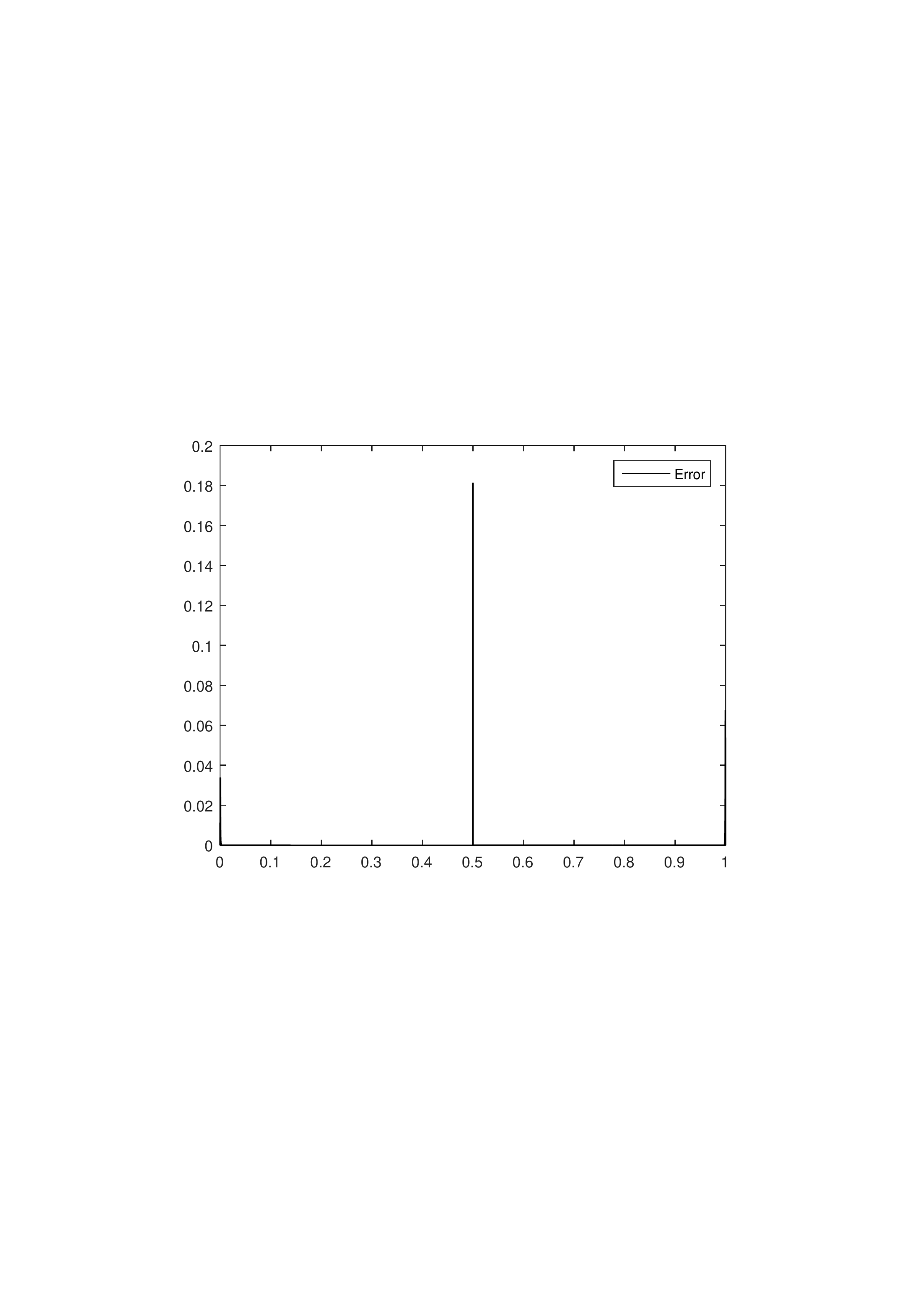}
\caption{Plot of errors for $\epsilon=10^{-12}, \mu=10^{-4}$ when $N=256$ for Example \ref{ex-a}.}\label{fig14}
\end{figure}
\begin{figure}[h]%
\centering
\includegraphics[width=0.9\textwidth]{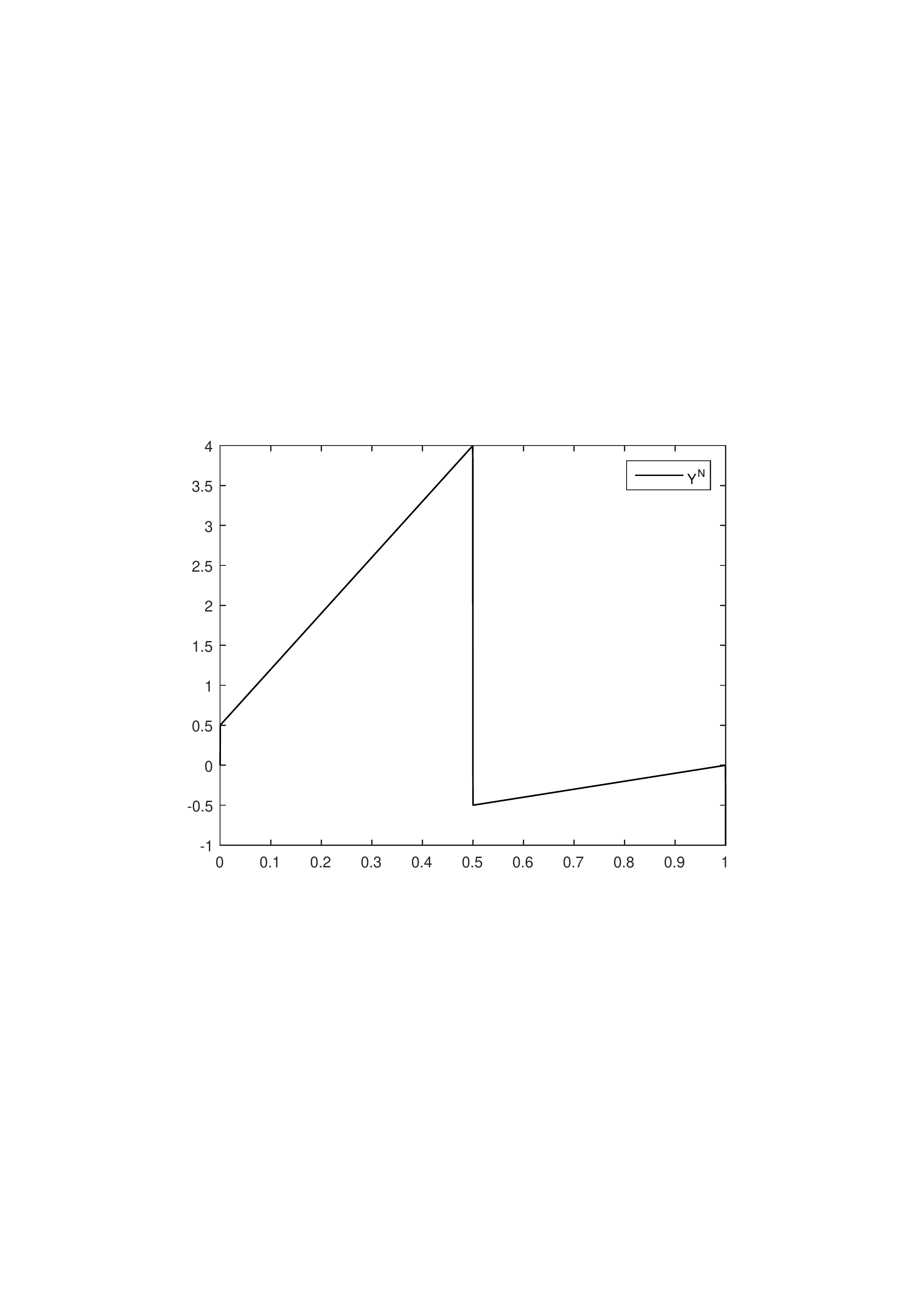}
\caption{Plot of numerical solution for $\epsilon=10^{-8}, \mu=10^{-6}$ when $N=256$ for Example \ref{ex-b}.}\label{fig21}
\end{figure}
\begin{figure}[h]%
\centering
\includegraphics[width=0.9\textwidth]{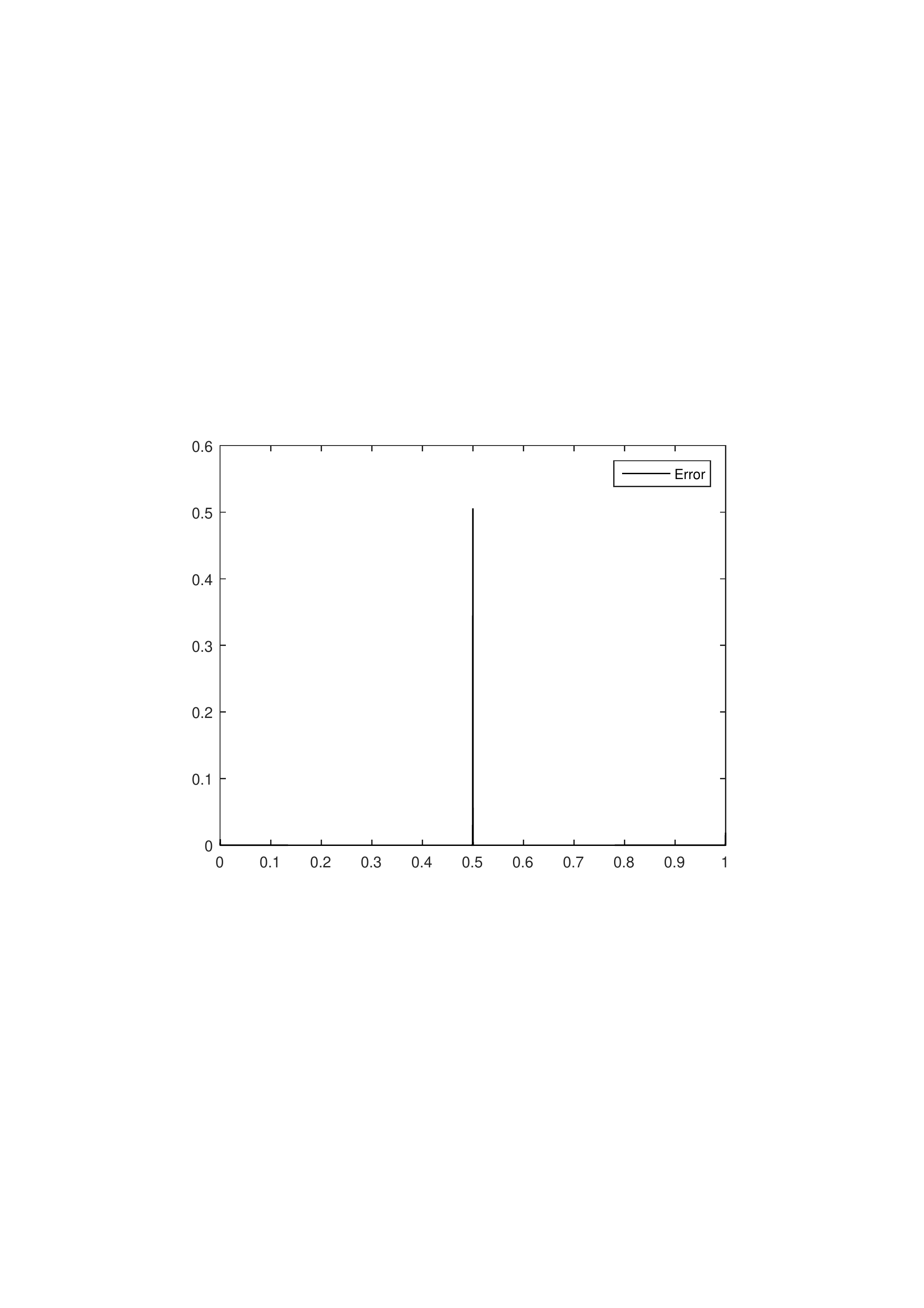}
\caption{Plot of errors for $\epsilon=10^{-8}, \mu=10^{-6}$ when $N=256$ for Example \ref{ex-b}.}\label{fig22}
\end{figure}
\begin{figure}[h]%
\centering
\includegraphics[width=0.9\textwidth]{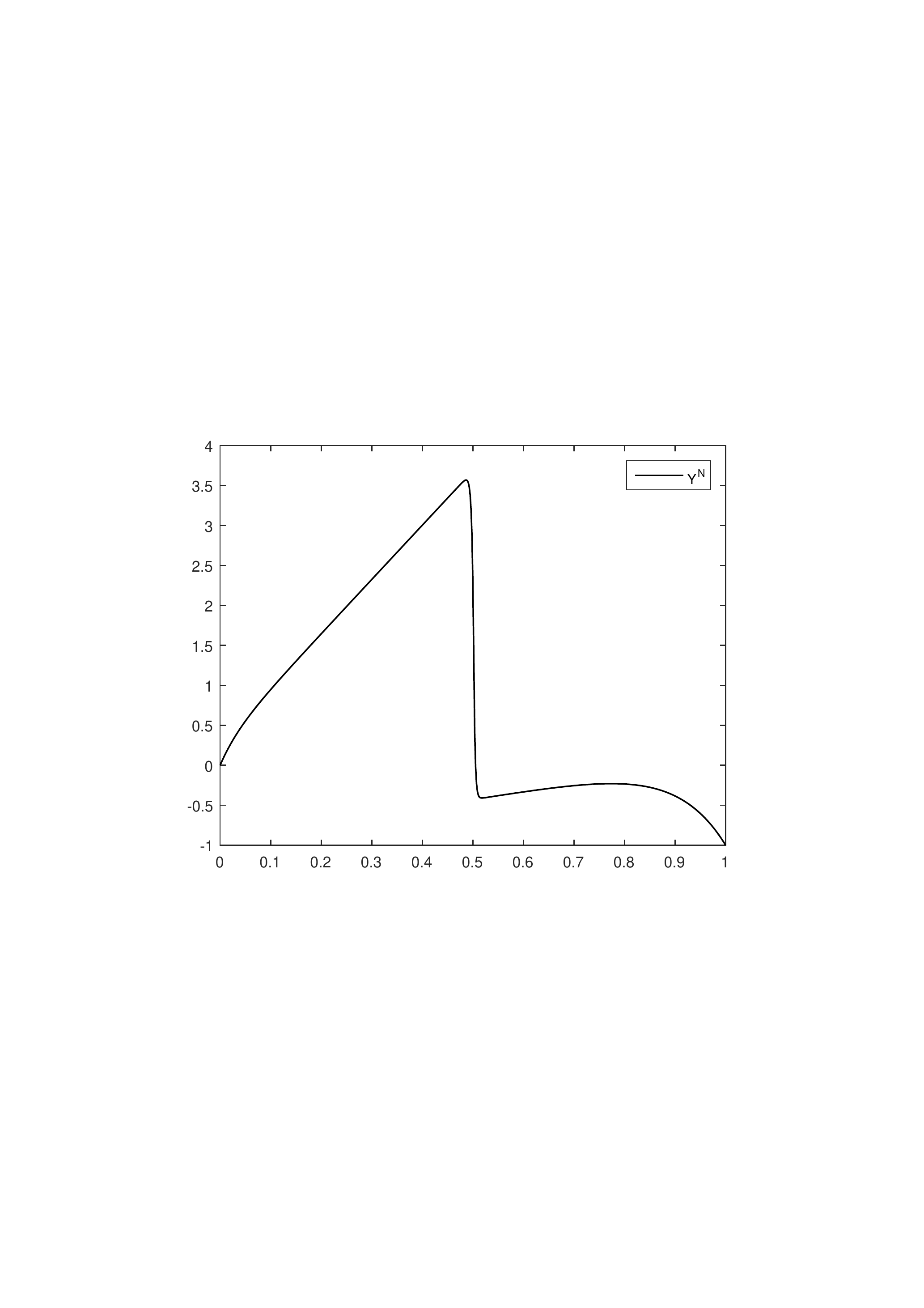}
\caption{Plot of numerical solution for $\epsilon=10^{-12}, \mu=10^{-4}$ when $N=256$ for Example \ref{ex-b}.}\label{fig23}
\end{figure}

\begin{figure}[h]%
\centering
\includegraphics[width=0.9\textwidth]{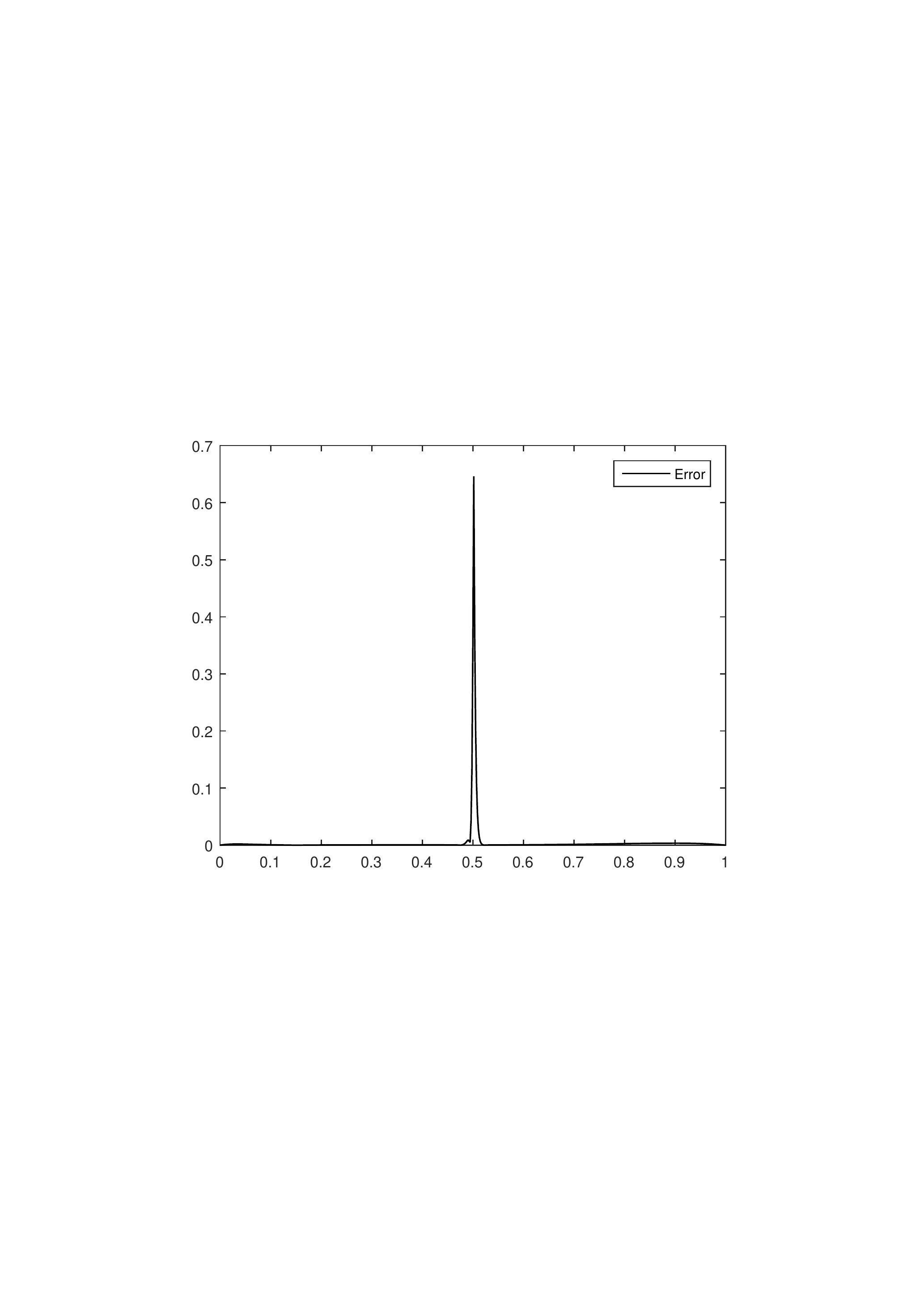}
\caption{Plot of errors for $\epsilon=10^{-12}, \mu=10^{-4}$ when $N=256$ for Example \ref{ex-b}.}\label{fig24}
\end{figure}

\section{Conclusion}\label{sec9}
In this article, we have proposed a Shishkin-Bakhvalov mesh on an upwind scheme to solve the two-parameter singularly perturbed BVP with a discontinuous source term and convection coefficient. At the point of discontinuity, we consider a three-point difference scheme. The theoretical error estimates prove that the proposed scheme is first-order convergent in the maximum norm. The use of the Shishkin-Bakhvalov mesh helps in achieving the first-order convergence. The numerical results presented confirm the theoretical error estimates obtained. The numerical order of convergence approaches one as the number of mesh points increases. A comparison table between the numerical order of convergence obtained through the Shishkin mesh and the Shishkin-Bakhvalov mesh shows the efficiency of the mesh used.


\begin{thebibliography}{9}

\bibitem{Alh} Alhumaizi, K.: Flux limiting solution techniques for simulation of reaction-diffusion-convection system. Commun. Nonlinear Sci. Numeri. simul. 12(6), 953-965 (2007)
	
\bibitem{Cen} Cen, Z.: A hybrid difference scheme for a singularly perturbed convection-diffusion problem with discontinuous convection coefficient.  Appl. Math. Comput. 169(1), 689-699 (2005)
	
\bibitem{CPS}  Chandru, M., Prabha, T., Shanthi, V.: A parameter robust higher order numerical method for singularly perturbed two parameter problems with non-smooth data. J. Comput. Appl. Math. 309, 11–27 (2017)  
	
\bibitem{DM} Doolan, E.P., Miller, J.J.H., Schilders, W.H.: Uniform Numerical Methods for Problems with Initial and Boundary Layers. , Boole Press, Vol 1 (1980)

\bibitem{FHMRS1}	Farrell, P.A., Hegarty, A.F.,  Miller, J.J.H., O'Riordan,  E., Shishkin,  G.I.: Robust computational techniques for boundary layers. Chapman and Hall/CRC, Boca Raton, FL, (2000)

\bibitem{FMRS} 	Farrell, P.A., Miller,  J.J.H., O'Riordan, E., Shishkin, G.I.: Singularly perturbed differential equations with discontinuous source terms. in: J.J.H. miller, G.I. Shishkin, L. Vulkov (Eds.), Analytical and Numerical Method for Convection-Dominated and Singularly Perturbed Problems, Nova Science Publishers, Inc., New York, 23-32 (1998)
	
\bibitem{FH} Farrell, P.A., Hegarty,  A.F.,  Miller,  J.J.H.,  O'Riordan, E.,  Shishkin, G.I., Global maximum norm parameter-uniform numerical method for a singularly perturbed convection-diffusion problem with discontinuous convection coefficient. Math. Comput. Modelling 40 (11–12), 1375-1392 (2004) 
	
\bibitem{FHMRS} Farrell,  P.A.,  Hegarty, A.F.,  Miller,J .J.H.,  O'Riordan, E.,  Shishkin,G.I.: Singularly perturbed convection-diffusion problems with boundary and weak interior layers. J. Comput. Appl. Math. 166, 133-151 (2004)	

\bibitem{GRP}  Gracia, J. L., O'Riordan, E.; Pickett, M. L. A parameter robust second order numerical method for a singularly perturbed two-parameter problem. Appl. Numer. Math., 56(7), 962-980 (2006)
	
\bibitem{Hir} Hirsch, C.: Numericcal computation of internal and external flows. Wiley, Chichester, Vol I, (1990)
	
\bibitem{PHS} Polak, S., Heijer, C.D., Schilders, W.: Semiconductor device modelling from the numerical point of view. Int. J. Numer. Methods Eng. 24, 763-838 (1987)
	
\bibitem{KL} Kreiss, H. O., Lorenz, J.: Initial- boundary value problems and the Navier-Stokes equations.  Classics in Appl. Math. SIAM,  Philadelphia, PA (Reprint of 1989 edition), Vol 47, (2004)

\bibitem{Linss99} T. Lin{\ss}, An upwind difference scheme on a novel Shishkin-type mesh for a linear convection–diffusion problem.
J. of Comput and Appl. Math.,110(1), 93-104(1999)
	
\bibitem{Lins02} Lin{\ss}, T.: Finite difference schemes for convection-diffusion problems with a concentrated source and a discontnuous convection field.  Comput. Methods Appl. Math. 2(1), 41-49 (2002) 

\bibitem{OM1}  O'Malley Jr, R.E.: Two-parameter singular perturbation problems for second order equations.  J. Math. Mech., 16, 1143-1164 (1967).

\bibitem{OM} O'Malley Jr, R.E.: Introduction to Singular Perturbations. Academic Press, New York, (1974).

\bibitem{OM2}O'Malley Jr, R.E.: Singular Perturbation Methods for Ordinary Differential Equations. Springer, New York, (1990).

\bibitem{RPS} O'Riordan, E.; Pickett, M. L.; Shishkin, G. I.: Singularly perturbed problems modeling reaction-convection-diffusion processes.  Comput. Methods Appl. Math. 3(3), 424-442(2003)
	
\bibitem{TP}  Prabha, T., Chandru, M., Shanthi, V., Ramos, H.: Discrete approximation for a two-parameter singularly perturbed boundary value problem having discontinuity in convection coefficient and source term. J. of Comput. and Appl. Math. 359,  102-118 (2019)
	
\bibitem{PCS} Prabha, T., Chandru, M., Shanthi, V.: Hybrid Difference Scheme for Singularly Perturbed Reaction-Convection-Diffusion Problem with Boundary and Interior Layers.  Appl. Math. Comput. 31,  237-256 (2017) 

\bibitem{RU}  Roos, H.-G., Uzelac, Z.: The SDFEM for a convection diffusion problem with two small parameters.
Comput. Methods Appl. Math., 3(3), 443-458(2003)
	
\bibitem{REILW} Rap, A., Elliott, L., Ingham, D.B., Lesnic, D., Wen, X.: The inverse source problem for the variable coefficients confection-diffusion equation. Inverse Probl. sci. Eng. 15, 413-440 (2007)

\bibitem{SRN06}	Shanthi, V., Ramanujam, N., Natesan, S.: Fitted mesh method for singularly perturbed reaction-convection-diffusion problems with boundary and interior layers. J. Appl. Math. Comput. 22( 1-2), 49-65 (2006) 

\bibitem{Z} Zahra,  W.K., El Mhlawy, A.M.: Numerical solution of two-parameter singularly perturbed boundary value problems via exponential spline. J. King Saud Univ., 25(3), 201-208(2013)
	
\bibitem{ZW} Zahra, W.K., Daele, M.V.: \textit{Discrete spline solution of singularly perturbed problem with two small parameters on a Shishkin-Type mesh}, Comput. Math. and Model., 29(5), 1-15(2018) 

\end{thebibliography}
\end{document}